\tikzset{%
	descr/.style={fill=white},
	baseline={([yshift=-\the\dimexpr\fontdimen22\textfont2\relax]
	                    current bounding box.center)},
	->,>=angle 90
}
\newcommand{\ig}[2]{\vcenter{\xy (0,0)*{\includegraphics[scale=#1]{fig/#2}} \endxy}}
\newtheorem{thm}{Theorem}[section]
\newtheorem{lemma}[thm]{Lemma}
\newtheorem{prop}[thm]{Proposition}
\newtheorem{cor}[thm]{Corollary}
\newtheorem{conj}[thm]{Conjecture}
\newtheorem{dream}[thm]{Dream}
\newtheorem*{prop*}{Proposition}
\newtheorem*{lemma*}{Lemma}
\theoremstyle{definition}
\newtheorem{defn}[thm]{Definition}
\newtheorem{notation}[thm]{Notation}
\newtheorem{ex}[thm]{Example}
\theoremstyle{remark}
\newtheorem{remark}[thm]{Remark}
\newtheorem{rmk}[thm]{Remark}
\numberwithin{equation}{section}
\def\al{\alpha}
\def\la{\lambda}
\let\phi=\varphi
\let\tilde=\widetilde
\newcommand{\barphi}{\bar{\varphi}}
\def\N{{\mathbbm N}}
\def\Z{{\mathbbm Z}}
\def\one{\mathbbm{1}}
\newcommand{\un}{\underline}
\newcommand{\ot}{\otimes}
\newcommand{\pa}{\partial}
\newcommand{\co}{\colon}
\renewcommand{\to}{\rightarrow}
\newcommand{\sumset}{\stackrel{\scriptstyle{\oplus}}{\scriptstyle{\subset}}}
\newcommand{\refequal}[1]{\xy {\ar@{=}^{#1}
(-1,0)*{};(1,0)*{}};
\endxy}
\DeclareMathOperator{\Hom}{Hom}
\DeclareMathOperator{\HOM}{HOM}
\DeclareMathOperator{\End}{End}
\DeclareMathOperator{\BS}{BS}
\DeclareMathOperator{\id}{id}
\DeclareMathOperator{\FT}{FT}
\DeclareMathOperator{\HT}{HT}
\DeclareMathOperator{\half}{ht}
\DeclareMathOperator{\full}{ft}
\DeclareMathOperator{\Schu}{Schu}
\DeclareMathOperator{\bug}{big}
\DeclareMathOperator{\pbug}{pbig}
\DeclareMathOperator{\Cone}{Cone}
\DeclareMathOperator{\JW}{JW}
\newcommand{\pSchu}{\mathrm{Schu}^p}
\def\HB{\mathbb{H}}
\newcommand{\ab}{\mathbf{a}}
\newcommand{\cb}{\mathbf{c}}
\newcommand{\pcb}{\mathbf{c}^p}
\newcommand{\pab}{\mathbf{a}^p}
\newcommand{\xb}{\mathbf{x}}
\newcommand{\pxb}{\mathbf{x}^p}
\def\IC{\mathcal{I}}
\def\HC{\mathcal{H}}
\def\FC{\mathcal{F}}
\def\OC{\mathcal{O}}
\newcommand{\ubr}[2]{\underbrace{#1}_{#2}}
\newcommand{\pb}{c}
\newcommand{\pB}{C}
\DeclareMathOperator{\val}{val}
\newcommand{\lineblue}{\ig{.5}{lineblue}}
\newcommand{\linered}{\ig{.5}{linered}}
\newcommand{\longblue}{\ig{.5}{longblue}}
\newcommand{\longred}{\ig{.5}{longred}}
\newcommand{\startdotblue}{\ig{.5}{startdotblue}}
\newcommand{\startdotred}{\ig{.5}{startdotred}}
\newcommand{\finaldotblue}{\ig{.5}{enddotblue}}
\newcommand{\finaldotred}{\ig{.5}{enddotred}}
\newcommand{\barbblue}{\ig{.5}{barbblue}}
\newcommand{\barbred}{\ig{.5}{barbred}}
\newcommand{\brokenred}{\ig{.5}{brokenred}}
\newcommand{\splitred}{\ig{.5}{splitred}}
\newcommand{\splitblue}{\ig{.5}{splitblue}}
\newcommand{\mergered}{\ig{.5}{mergered}}
\newcommand{\mergeblue}{\ig{.5}{mergeblue}}
\newcommand{\redtoblue}{\ig{.5}{redtoblue}}
\newcommand{\trianglered}{\ig{.5}{trianglered}}
\newcommand{\bluepitchforkup}{\ig{.5}{bluepitchforkup}}
\newcommand{\bluepitchforkdown}{\ig{.5}{bluepitchforkdown}}
\newcommand{\redpitchforkup}{\ig{.5}{redpitchforkup}}
\newcommand{\redpitchforkdown}{\ig{.5}{redpitchforkdown}}
\newcommand{\doublebluepitch}{\ig{.5}{doublebluepitch}}
\newcommand{\doubleredpitch}{\ig{.5}{doubleredpitch}}
\newcommand{\bigpoly}[1]{{
\labellist
\small\hair 2pt
 \pinlabel {$#1$} [ ] at 7 15
\endlabellist
\centering
\ig{1}{space}
}}
\newcommand{\ethree}{\; {
\labellist
\tiny\hair 2pt
 \pinlabel {$3$} [ ] at 19 23
\endlabellist
\centering
\ig{.5}{three}
}\; }
\newcommand{\wtothree}{{
\labellist
\tiny\hair 2pt
 \pinlabel {$3$} [ ] at 20 44
\endlabellist
\centering
\ig{.5}{wtothree}
}}
\newcommand{\wtootherthree}{{
\labellist
\tiny\hair 2pt
 \pinlabel {$3'$} [ ] at 20 44
\endlabellist
\centering
\ig{.5}{wtootherthree}
}}
\newcommand{\threetots}{{
\labellist
\tiny\hair 2pt
 \pinlabel {$3$} [ ] at 20 20
\endlabellist
\centering
\ig{.5}{threetots}
}}
\newcommand{\threetost}{{
\labellist
\tiny\hair 2pt
 \pinlabel {$3$} [ ] at 20 20
\endlabellist
\centering
\ig{.5}{threetost}
}}
\newcommand{\othertots}{{
\labellist
\tiny\hair 2pt
 \pinlabel {$3'$} [ ] at 20 20
\endlabellist
\centering
\ig{.5}{othertots}
}}
\newcommand{\othertost}{{
\labellist
\tiny\hair 2pt
 \pinlabel {$3'$} [ ] at 20 20
\endlabellist
\centering
\ig{.5}{othertost}
}}
\newcommand{\sttothree}{{
\labellist
\tiny\hair 2pt
 \pinlabel {$3$} [ ] at 20 28
\endlabellist
\centering
\ig{.5}{sttothree}
}}
\newcommand{\tstothree}{{
\labellist
\tiny\hair 2pt
 \pinlabel {$3$} [ ] at 20 28
\endlabellist
\centering
\ig{.5}{tstothree}
}}
\newcommand{\sttoother}{{
\labellist
\tiny\hair 2pt
 \pinlabel {$3'$} [ ] at 20 28
\endlabellist
\centering
\ig{.5}{sttoother}
}}
\newcommand{\tstoother}{{
\labellist
\tiny\hair 2pt
 \pinlabel {$3'$} [ ] at 20 28
\endlabellist
\centering
\ig{.5}{tstoother}
}}
\newcommand{\idtothree}{{
\labellist
\tiny\hair 2pt
 \pinlabel {$3$} [ ] at 20 28
\endlabellist
\centering
\ig{.5}{idtothree}
}}
\newcommand{\threestthree}{{
\labellist
\tiny\hair 2pt
 \pinlabel {$3$} [ ] at 20 18
 \pinlabel {$3$} [ ] at 20 60
\endlabellist
\centering
\ig{.5}{threestthree}
}}
\newcommand{\threetsthree}{{
\labellist
\tiny\hair 2pt
 \pinlabel {$3$} [ ] at 20 18
 \pinlabel {$3$} [ ] at 20 60
\endlabellist
\centering
\ig{.5}{threetsthree}
}}
\newcommand{\threetostosts}{{
\labellist
\tiny\hair 2pt
 \pinlabel {$3$} [ ] at 20 18
\endlabellist
\centering
\ig{.5}{threetostosts}
}}
\newcommand{\threetosttosts}{{
\labellist
\tiny\hair 2pt
 \pinlabel {$3$} [ ] at 20 18
\endlabellist
\centering
\ig{.5}{threetosttosts}
}}
\newcommand{\threetotstosts}{{
\labellist
\tiny\hair 2pt
 \pinlabel {$3$} [ ] at 20 18
\endlabellist
\centering
\ig{.5}{threetotstosts}
}}
\newcommand{\ststotstothree}{{
\labellist
\tiny\hair 2pt
 \pinlabel {$3$} [ ] at 20 46
\endlabellist
\centering
\ig{.5}{ststotstothree}
}}
\newcommand{\threetrithree}{{
\labellist
\tiny\hair 2pt
 \pinlabel {$3$} [ ] at 20 18
 \pinlabel {$3$} [ ] at 20 62
\endlabellist
\centering
\ig{.5}{threetrithree}
}}
\newcommand{\othertriother}{{
\labellist
\tiny\hair 2pt
 \pinlabel {$3'$} [ ] at 20 18
 \pinlabel {$3'$} [ ] at 20 62
\endlabellist
\centering
\ig{.5}{othertriother}
}}
\newcommand{\othertsother}{{
\labellist
\tiny\hair 2pt
 \pinlabel {$3'$} [ ] at 20 18
 \pinlabel {$3'$} [ ] at 20 62
\endlabellist
\centering
\ig{.5}{othertsother}
}}
\newcommand{\othertsthree}{{
\labellist
\tiny\hair 2pt
 \pinlabel {$3'$} [ ] at 20 18
 \pinlabel {$3$} [ ] at 20 62
\endlabellist
\centering
\ig{.5}{othertsthree}
}}
\newcommand{\threestother}{{
\labellist
\tiny\hair 2pt
 \pinlabel {$3$} [ ] at 20 18
 \pinlabel {$3'$} [ ] at 20 62
\endlabellist
\centering
\ig{.5}{threestother}
}}
\newcommand{\wtosts}{\ig{.5}{wtosts}}
\newcommand{\ststow}{\ig{.5}{ststow}}
\newcommand{\sttow}{\ig{.5}{sttow}}
\newcommand{\tstow}{\ig{.5}{tstow}}
\newcommand{\wtost}{\ig{.5}{wtost}}
\newcommand{\wtots}{\ig{.5}{wtots}}
\newcommand{\wtos}{\ig{.5}{wtos}}
\newcommand{\wtot}{\ig{.5}{wtot}}
\newcommand{\wtoid}{\ig{.5}{wtoid}}
\newcommand{\idtow}{\ig{.5}{idtow}}
\newcommand{\wtostswtri}{\ig{.5}{wtostswtriangle}}
\newcommand{\ststowwtri}{\ig{.5}{ststowwtriangle}}
\newcommand{\pitchcupred}{\ig{.5}{pitchcupred}}
\begin{document}

\title{Categorical Diagonalization and $p$-cells}

\author{Ben Elias} \address{University of Oregon, Eugene}

\author{Lars Thorge Jensen} \address{Institute of Advanced Study, Princeton}

\dedicatory{Dedicated to George Lusztig, who built our playground.}

\maketitle

\begin{abstract} In the Iwahori-Hecke algebra, the full twist acts on cell modules by a scalar, and the half twist acts by a scalar and an involution. A categorification of this
statement, describing the action of the half and full twist Rouquier complexes on the Hecke category, was conjectured by Elias-Hogancamp, and proven in type $A$. In this paper we
make analogous conjectures for the $p$-canonical basis, and the Hecke category in characteristic $p$. We prove the categorified conjecture in type $C_2$, where the situation is
already interesting. The decategorified conjecture is confirmed by computer in rank $\le 6$; information is found in the appendix, written by
Joel Gibson. \end{abstract}

\tableofcontents

\section{New conjectures about $p$-cells} \label{sec:intro}

In \cite{EHDiag2}, the first-named author and Hogancamp made a conjecture (proven in type $A$) which can be summed up with the motto: the categorical full twist understands cell
theory. The goal of this paper is to state a more surprising conjecture: the categorical full twist in finite characteristic understands $p$-cell theory. We prove our conjecture in
type $C_2$. We also discuss some interesting phenomena which first occur in type $C_3$.

This first chapter is a working introduction which states the main conjectures of the paper. It is aimed at experts already familiar with the cell theory of the Hecke algebra, and
with its categorification, though perhaps not with diagrammatics, categorical diagonalization, or the $p$-canonical basis. One can skip from there to the chapter on type $C_3$,
aimed at the same audience. The remaining chapters are heavily computational, and aimed at the expert in diagrammatics. For additional background material we recommend \cite{EMTW}.

\subsection{Action of the half twist on the Kazhdan-Lusztig basis}

We begin by discussing the decategorified version of our conjecture.

Let $W$ be a finite Coxeter group with longest element $w_0$. Let $\HB(W)$ denote the Iwahori-Hecke algebra of $W$, a $\Z[v,v^{-1}]$-deformation of the group algebra of $W$. The \emph{half twist} $\half_W \in \HB(W)$ is the standard basis element associated to $w_0$. The \emph{full twist} $\full_W \in
\HB(W)$ is the square of the half twist, \[ \full_W = \half_W^2, \] and it is an important element in the center of $\HB(W)$. A common theme in representation theory is to diagonalize central operators, and this is what we explore here for the full twist.

Let $\{b_w\}_{w \in W}$ denote the Kazhdan-Lusztig basis of $\HB(W)$, which we call the \emph{KL basis}. If $\la$ is a two-sided cell, we let $I_{< \la} \subset \HB(W)$ denote the ideal spanned by KL basis elements in strictly lower cells. Here is a classic theorem; we discuss the attribution below.

\begin{thm} \label{thm:fullintrodecat}
There is an integer $\xb(\la)$ for each two-sided cell $\la$, such that for any $w \in \la$ we have \begin{equation} \label{eq:fulltake1} \full_W \cdot b_w \equiv v^{2 \xb(\la)} b_w \text{ modulo } I_{< \la}. \end{equation} \end{thm}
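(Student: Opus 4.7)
The approach combines centrality of $\full_W$ with the ideal structure coming from two-sided cells, and then uses semisimplicity over $\Q(v)$ to reduce to a scalar action.

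First, I would note that $\full_W = \half_W^2$ is central in $\HB(W)$: the braid group lift $\Delta^2$ of $w_0^2$ is the classical full twist, which is central in the braid group, and this centrality descends to $\HB(W)$. By definition of the two-sided preorder, $I_{\leq \la}$ and $I_{<\la}$ are two-sided ideals of $\HB(W)$, so left (equivalently right) multiplication by $\full_W$ induces a bimodule endomorphism of the subquotient
\[ M_\la := I_{\leq \la}/I_{<\la}, \]
whose KL basis is indexed by $\la$. The claim is equivalent to saying this endomorphism is the scalar $v^{2\xb(\la)}$.

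Next I would extend scalars to $\Q(v)$, where $\HB(W)$ becomes split semisimple. Centrality forces $\full_W$ to act by a scalar on each simple module. The simples appearing in $M_\la \otimes \Q(v)$ make up exactly the Lusztig \emph{family} attached to $\la$; to see that $\full_W$ acts by a \emph{single} scalar across the whole family, I would pass to Lusztig's asymptotic Hecke algebra $J$, whose block $J_\la$ is the piece corresponding to $\la$. The image of $\full_W$ in $J$ is central and restricts to a single central element of $J_\la$, and the family structure guarantees that this central element acts uniformly on all simples of $J_\la$. Pulling this back to $\HB(W)$ and then to $M_\la$ shows that $\full_W$ acts on $M_\la \otimes \Q(v)$ by a single scalar $c_\la(v) \in \Q(v)$.

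Finally, to identify $c_\la(v)$ with $v^{2\xb(\la)}$, I would combine three inputs: (i) integrality of KL structure constants forces $c_\la(v) \in \Z[v, v^{-1}]$; (ii) specialization $v \mapsto 1$ reduces $\full_W$ to $w_0^2 = 1 \in \Z[W]$, so $c_\la(1) = 1$; and (iii) a degree/positivity estimate, exploiting $\full_W$'s expansion via $b_{w_0}^2$ and controlled by Lusztig's $a$-function on $\la$, forces $c_\la(v)$ to be a monomial in $v$, necessarily of even degree because $\full_W$ is a square. Together these give $c_\la(v) = v^{2\xb(\la)}$ for a nonnegative integer $\xb(\la)$, concretely expressible in terms of $\ell(w_0)$ and $a(\la)$.

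The main obstacle is the middle paragraph: while Schur's lemma hands one a scalar on each simple for free, showing that \emph{the same} scalar governs the entire two-sided cell requires Lusztig's structural results on families and the asymptotic algebra. Once this uniformity is in hand, the remaining integrality and monomial identification are comparatively routine. In the categorified version pursued later in the paper, the analogue of this uniformity is the much deeper content, and the analogue of the scalar becomes an eigenvalue in the categorical diagonalization sense.
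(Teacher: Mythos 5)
Your approach is structurally different from the paper's. The paper does not argue from centrality: it invokes the sharper half-twist statement (Theorem~\ref{thm:halfintrodecat}, due to Graham, Mathas, and Lusztig), which says $\half_W \cdot b_w \equiv (-1)^{\cb(\la)} v^{\xb(\la)} b_{\Schu_L(w)}$ modulo $I_{<\la}$ with $\Schu_L$ an involution preserving each left cell, and simply squares it. Since $I_{<\la}$ is a two-sided ideal and $\Schu_L(w)$ lies in the same cell as $w$, this gives
\[
\full_W \cdot b_w \;\equiv\; (-1)^{2\cb(\la)}\, v^{2\xb(\la)}\, b_{\Schu_L^2(w)} \;=\; v^{2\xb(\la)}\, b_w \pmod{I_{<\la}},
\]
with both uniformity across the cell and evenness of the exponent built in for free.

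Your plan has a genuine gap exactly where you flagged the main obstacle. Centrality of $\full_W$ plus Schur's lemma gives a scalar on each simple summand of $(I_{\leq\la}/I_{<\la}) \ot \Q(v)$, but a priori a different scalar on each. The assertion that ``the family structure guarantees that this central element acts uniformly on all simples of $J_\la$'' is not something one gets for free: outside type~$A$ a single family can contain several non-isomorphic irreducibles, and $J_\la$ can have a genuinely nontrivial center (it is expected to be Morita equivalent to a category built from a finite, possibly nonabelian group $\Gamma_\la$), so an arbitrary central element of $J_\la$ is perfectly free to act by different scalars on different simples. That the particular central element coming from $\full_W$ acts uniformly is true and nontrivial, but proving it requires the explicit eigenvalue formula in terms of family-constant invariants (essentially $\ab(\la)$ and $\ab(w_0\cdot\la)$, matching the paper's $\xb(\la) = \ab(w_0\cdot\la) - \ab(\la)$), or else exactly the half-twist statement you were trying to sidestep; it is not a consequence of centrality and the existence of $J$ alone. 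Nor is it merely a cleanup step: if the scalars differed, $\full_W$ would not act diagonally on the KL basis of $I_{\leq\la}/I_{<\la}$ (that basis is not adapted to the isotypic decomposition over $\Q(v)$), so without uniformity you cannot even assert $\full_W b_w \equiv c_w b_w$ for some scalar $c_w$. A secondary gap: ``necessarily of even degree because $\full_W$ is a square'' does not follow, since $\half_W$ is not central and need not act by a scalar on a simple (nor, if it did, by a scalar in $\Q(v)$); your unit-plus-specialization argument correctly yields $c_\la = v^k$, but the parity of $k$ again drops most cleanly out of the half-twist theorem.
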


While the KL basis is not an eigenbasis for $\full_W$, it descends to an eigenbasis in the associated graded of the cell filtration. Morally speaking, cell theory controls the spectral theory of the full twist. Note that there may be multiple cells which share the same\footnote{In type $A$, this happens for the first time in type $A_5$, with the partitions $(3,1,1,1)$ and $(2,2,2)$. However, one can still distinguish between cells in type $A$ by simultaneously diagonalizing full twists of various sizes, see \cite{EHDiag2}.} value of $\xb(\la)$.

Theorem \ref{thm:fullintrodecat} is an immediate consequence of a more refined property of the half twist.

\begin{thm} \label{thm:halfintrodecat} There is an integer $\xb(\la)$ and a non-negative integer $\cb(\la)$ for each two-sided cell $\la$, such that for any $w \in \la$ we have \begin{equation}
\label{eq:halfintro} \half_W \cdot b_{w} \equiv (-1)^{\cb(\la)} v^{\xb(\la)} b_{\Schu_L(w)} \text{ modulo } I_{< \la}. \end{equation} Here $\Schu_L$ is some
involution on $W$ which preserves each left cell, generalizing the (left) Sch\"{u}tzenberger involution in type $A$. Moreover, $\la < \mu$ implies that $\cb(\la) < \cb(\mu)$ and $\xb(\la) < \xb(\mu)$. \end{thm}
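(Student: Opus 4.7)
The plan is to analyze the action of $\half_W = T_{w_0}$ on the associated graded pieces $I_{\leq \la}/I_{<\la}$ of the two-sided cell filtration. The starting observation is that, because $I_{<\la}$ and $I_{\leq \la}$ are two-sided ideals, left multiplication by $T_{w_0}$ descends to a well-defined map on each subquotient. Moreover, the support properties of the KL structure constants $h_{w_0,w}^z$ ensure that this action preserves each left cell submodule $V_\Ga$ for $\Ga \subset \la$: one has $z \leq_L w$, and within the two-sided cell $\la$ this forces $z \sim_L w$. So the problem reduces to understanding a single operator on each cell module $V_\Ga$.

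Next I would pass to Lusztig's asymptotic Hecke algebra $J$, where the image of $b_{w_0}$ can be computed explicitly. Since $\full_W = \half_W^2$ is central and acts by a scalar on each simple $\HB$-module, the operator $\half_W$ squares to a scalar on each left cell in $J$; in the semisimple algebra $J$ this forces $\half_W$ to act on the basis $\{t_w\}$ as a signed permutation times a power of $v$. Tracking this through the homomorphism $\phi \colon \HB \to J \otimes \Z[v,v^{-1}]$ yields an expression $\half_W \cdot b_w \equiv \epsilon_w v^{\xb(\la)} b_{\Schu_L(w)} \pmod{I_{<\la}}$, with $\xb(\la) = \ell(w_0) - 2\ab(\la)$ in terms of Lusztig's $\ab$-function and $\epsilon_w = \pm 1$. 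The resulting bijection $\Schu_L$ automatically preserves each left cell, and the relation $\half_W^2 = \full_W$ forces $\Schu_L$ to be an involution.

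The main obstacle, I expect, is showing that $\epsilon_w$ depends only on the two-sided cell $\la$, so that $\cb(\la)$ is well-defined, and then proving the strict monotonicity $\la < \mu \Rightarrow \cb(\la) < \cb(\mu)$. Constancy of $\epsilon_w$ across the various left cells inside a two-sided cell can be approached by exploiting the linkage coming from right multiplication and Kazhdan-Lusztig $\ast$-operations, and checking that these transitions carry the sign coherently. For strict monotonicity, I would try to identify $(-1)^{\cb(\la)}$ with a structural invariant of the cell representation---for instance, a parity tied to the Frobenius-Schur indicator, or to the behaviour of the Schützenberger involution on the associated simple $W$-representation---and then deduce monotonicity from a known property of cell chains. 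The strict monotonicity of $\xb$, by contrast, reduces cleanly to the well-known strict monotonicity of Lusztig's $\ab$-function along chains of two-sided cells.
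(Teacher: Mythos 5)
The paper does not prove this theorem itself: it is quoted from Graham (type $A$), Mathas (all finite types), and Lusztig (unequal parameters), with the explicit formulas $\cb(\la) = \ab(w_0 \cdot \la)$ and $\xb(\la) = \ab(w_0 \cdot \la) - \ab(\la)$ recorded in \eqref{cxdef}. Your sketch is therefore a free-standing attempt, and it has three real gaps.

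First, the inference that because $\half_W^2$ acts by a scalar on each cell module in the semisimple algebra $J$, ``$\half_W$ acts on the basis $\{t_w\}$ as a signed permutation times a power of $v$,'' is not valid: an operator squaring to a scalar need not be a signed permutation in a given basis (e.g.\ $\bigl(\begin{smallmatrix}0 & 2 \\ 1/2 & 0\end{smallmatrix}\bigr) \in M_2(\C)$ squares to the identity). That $\half_W \cdot b_w$ equals a \emph{single} KL basis element modulo $I_{< \la}$ is precisely the heart of the theorem, and the published proofs obtain it from detailed KL combinatorics (positivity, degree bounds, the $\gamma$-structure constants of $J$), not from semisimplicity alone. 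Second, your formula $\xb(\la) = \ell(w_0) - 2\ab(\la)$ is wrong: in type $C_2$ the middle cell $\la_{\bug}$ has $\ab = 1$ and $\ell(w_0) = 4$, so your formula gives $2$ while the correct value is $0$. The correct formula is $\xb(\la) = \ab(w_0\cdot\la) - \ab(\la)$, and $\ab(\la) + \ab(w_0\cdot\la)$ need not equal $\ell(w_0)$. Third, the decategorified relation only determines $(-1)^{\cb(\la)}$, i.e.\ $\cb$ modulo $2$, so no Hecke-algebra argument can recover $\cb(\la)$ as a non-negative integer, let alone prove its strict monotonicity; one must simply \emph{define} $\cb(\la) := \ab(w_0\cdot\la)$. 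With that definition, strict monotonicity of both $\cb$ and $\xb$ follows from strict monotonicity of $\ab$ across distinct two-sided cells together with the order-reversing action of $w_0$. Searching for a Frobenius--Schur-type parity invariant cannot substitute, as it would only pin down $\cb$ modulo $2$.
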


Theorem \ref{thm:halfintrodecat} was proven for all finite types by Mathas \cite{Mathas96}, and generalized to Hecke algebras with unequal parameters by Lusztig \cite{LuszLongest15}.
Previously, in \cite[Corollary 5.9]{Lusztig90}, Lusztig proved an analogous result about the action of the half twist braid on the canonical basis of a tensor product representation for
the quantum group. In type $A$, Theorem \ref{thm:halfintrodecat} can be deduced from this result using Schur-Weyl duality. Another proof in type $A$ was given in Graham's thesis
\cite{GraThesis}.

Recall that the action of $w_0$ permutes the two-sided cells. The integers $\xb$ and $\cb$ can be determined using Lusztig's $\ab$-function, via
\begin{equation} \label{cxdef} \cb(\la) = \ab(w_0 \cdot \la), \qquad \xb(\la) = \ab(w_0 \cdot \la) - \ab(\la). \end{equation}
From \eqref{cxdef} it is clear that $\cb(\la)$ is a non-negative integer, but it only matters up to parity in \eqref{eq:halfintro}, and is invisible in \eqref{eq:fulltake1}. Regardless, the precise value of $\cb(\la)$ plays an important role in the categorification. To emphasize its importance we prefer to rewrite \eqref{eq:fulltake1} as
\begin{equation} \label{eq:fulltake2} \full_W \cdot b_w \equiv (-1)^{2 \cb(\la)} v^{2 \xb(\la)} b_w \text{ modulo } I_{< \la}. \end{equation}

In \cite[Theorem 3.1]{Mathas96} Mathas pins down the involution $\Schu_L$ uniquely using cell-theoretic properties. In \cite[p9 and following]{Mathas96} he explores further properties of $\Schu_L$ and in \cite[Corollary 3.14]{Mathas96} he gives a criterion implying that $\Schu_L$ is the identity on a given left cell, which he expects to be necessary as well. As far as we are aware, $\Schu_L$ only has an explicit combinatorial interpretation in type $A$. If desired, \eqref{eq:halfintro} can be viewed as the definition of $\Schu_L(w)$, which picks out the unique non-vanishing coefficient in $\half_W \cdot b_w$ modulo lower terms.


\begin{ex} This is the example we follow throughout the paper. Let $W$ have type $C_2$, with simple reflections $\{s,t\}$. There are three two-sided cells:
\begin{subequations}
\begin{equation} \la_1 = \{\id\}, \quad \la_{\bug} = \{s,t,st,ts,sts,tst\}, \quad \la_0 = \{w_0\}, \end{equation}
where $w_0$ denotes the longest element of $W$. We have
\begin{equation} \xb(\la_1) = 4, \cb(\la_1) = 4, \qquad \xb(\la_{\bug}) = 0, \cb(\la_{\bug}) = 1, \qquad \xb(\la_0) = -4, \cb(\la_0) = 0. \end{equation}
The Sch\"{u}tzenberger involution on $\la_{\bug}$ satisfies
\begin{equation} \Schu_L(s) = sts, \qquad \Schu_L(t) = tst, \qquad \Schu_L(st) = st, \qquad \Schu_L(ts) = ts. \end{equation}
Note that $b_s$ is in the same (left) cell as $b_{ts}$ because $b_t b_s = b_{ts}$ and $b_s b_{ts} = b_s + b_{sts}$, so each appears as a summand in the ideal generated by the other.
\end{subequations}
\end{ex}

\subsection{Action of the half twist on the $p$-canonical basis}

Cells are a notion intrinsic to an algebra with a chosen basis, and the KL basis is not the only interesting basis of $\HB(W)$. A recent player of great importance in modular
representation theory \cite{RicWil} is the \emph{$p$-canonical basis} $\{{}^p b_w\}$ (associated to a prime $p$), which is defined when $W$ is crystallographic, see \cite{JenWilpCan}. We recall the construction of the $p$-canonical basis in Definition \ref{defn:pcanbasis}.
This basis is mysterious: it can be computed in examples, but there is no known algebraic formula (the methods to compute it involve working with the Hecke category rather than the
Hecke algebra). The (two-sided) cells associated to the $p$-canonical basis are called \emph{$p$-cells}, and were first studied systematically by the second author in \cite{JensenABC}.

When the prime $p$ is understood, we write $\pb_w := {}^p b_w$ for ease of reading.

\begin{ex} \label{ex:B2char2} We continue the previous example. The only prime for which $\{\pb_w\}$ and $\{b_w\}$ disagree is $p=2$, so henceforth we set $p=2$. For most $w$ we have $b_w = \pb_w$, the one exception being that
\begin{subequations}
\begin{equation} \pb_{sts} = b_{sts} + b_s. \end{equation}
However, now $b_s \pb_{ts} = \pb_{sts}$ which does not have $\pb_s$ as a summand, so we can not use the same argument as before to deduce that $s$ and $ts$ are in the same cell. Indeed they are not. The big $0$-cell $\la_{\bug}$ now splits into two smaller $p$-cells:
\begin{equation} \la_s = \{s\}, \qquad \la_{\pbug} = \{t,st,ts,sts,tst\}. \end{equation}
The partial order on $p$-cells is
\begin{equation} \la_0 < \la_{\pbug} < \la_s < \la_1. \end{equation}
Readers familiar with distinguished involutions can note that $sts$ now behaves like a distinguished involution in $\la_{\pbug}$. Finally, note that $w_0$ does not act to permute the $p$-cells.
\end{subequations}
\end{ex}

The first question one can ask is whether there is an analog of \eqref{eq:halfintro} and \eqref{eq:fulltake2} for the $p$-canonical basis. If $\la$ is a $p$-cell, we let $I^p_{< \la}$ denote the ideal spanned by strictly lower $p$-cells.

\begin{conj} \label{conj:htdecat} Fix a Weyl group $W$ and a prime $p$. There are integers
$\pxb(\la)$ and $\pcb(\la)$ for each $p$-cell $\la$, together with a \emph{$p$-Sch\"{u}tzenberger involution} $\pSchu_L$ preserving the left $p$-cells, such that
\begin{equation} \label{eq:phalfintro} \half_W \cdot \pb_{w} \equiv (-1)^{\pcb(\la)} v^{\pxb(\la)} \pb_{\pSchu_L(w)} \text{ modulo } I^p_{< \la} \end{equation}
for any $w$ in $\la$. Moreover, $\la < \mu$ implies that $\pcb(\la) < \pcb(\mu)$. \end{conj}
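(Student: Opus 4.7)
The plan is to reduce \eqref{eq:phalfintro} to Theorem \ref{thm:halfintrodecat} via the base change between the Kazhdan--Lusztig and $p$-canonical bases. Expand $\pb_w = \sum_y {}^p m_{y,w}(v)\, b_y$ with ${}^p m_{w,w} = 1$ and the remaining $y$'s strictly below $w$ in the two-sided KL preorder. Applying $\half_W$ and invoking Theorem \ref{thm:halfintrodecat} term by term gives
\[
\half_W \cdot \pb_w \;\equiv\; (-1)^{\cb(\mu)} v^{\xb(\mu)} \sum_{y \in \mu} {}^p m_{y,w}(v)\, b_{\Schu_L(y)} \pmod{I_{<\mu}},
\]
where $\mu$ is the two-sided KL cell containing $w$ and $\Schu_L$ preserves $\mu$. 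One then rewrites the right-hand side in the $p$-canonical basis via the inverse base change and reduces modulo $I^p_{<\la}$. Conjecturally this collapses to a single monomial multiple of some $\pb_{w'}$, which is named $\pb_{\pSchu_L(w)}$.

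The main obstacle is that the collapse requires dramatic cancellation among the inverse base-change coefficients, which are mysterious signed polynomials with no closed form. Example \ref{ex:B2char2} already shows how subtle this is: Theorem \ref{thm:halfintrodecat} gives $\half_W \cdot b_s \equiv -b_{sts} \pmod{I_{<\la_{\bug}}}$, but because $\pb_{sts} = b_{sts} + b_s$ lies in the strictly lower $p$-cell $\la_{\pbug}$, further reduction modulo $I^p_{<\la_s}$ yields $+\pb_s$. In particular $\pSchu_L(s) = s \neq sts = \Schu_L(s)$, so $\pSchu_L$ is not simply inherited from $\Schu_L$. This rules out any purely decategorified shortcut and is why the body of the paper categorifies the problem: one replaces $\pb_w$ by the indecomposable $p$-Soergel bimodule $B_w^p$ and the half twist by the Rouquier complex $\HT$, and proves that $\HT \otimes B_w^p$ is, modulo lower $p$-cells, homotopy equivalent to a grading and homological shift of $B_{\pSchu_L(w)}^p$. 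Decategorifying yields \eqref{eq:phalfintro}, and the precise grading and homological shifts pin down the integers $\pxb(\la)$ and $\pcb(\la)$ (which the decategorified equation alone only determines up to an even offset).

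Once the monomial action is established, the involutivity $\pSchu_L^2 = \id$ follows from the centrality of $\full_W = \half_W^2$: iterating \eqref{eq:phalfintro} and comparing with the diagonal action of $\full_W$ on each $\pb_w$ modulo $I^p_{<\la}$ forces $\pSchu_L \circ \pSchu_L = \id$ on every left $p$-cell. The strict monotonicity $\pcb(\la) < \pcb(\mu)$ should then emerge from the categorification, where $\pcb(\la)$ is the homological degree at which $B_{\pSchu_L(w)}^p$ appears in $\HT \otimes B_w^p$; the strict refinement of KL cells by $p$-cells, combined with the strict inequalities already known from Theorem \ref{thm:halfintrodecat}, should force the strict separation of these degrees across distinct $p$-cells.
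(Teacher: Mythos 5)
This statement is a conjecture, not a theorem, so there is no general proof in the paper to compare against: the paper's evidence consists of computer verification for Weyl groups of rank $\le 6$, plus a proof of the stronger categorified Conjecture~\ref{conj:htcat} (which implies the present statement) only for the dihedral Weyl groups $A_2$, $C_2$, $G_2$, obtained by explicitly computing minimal complexes of Rouquier complexes. Your proposal correctly diagnoses why the naive base-change reduction to Theorem~\ref{thm:halfintrodecat} cannot be carried out in general (the inverse change of basis between $\pb$ and $b$ has no closed form, and Example~\ref{ex:B2char2} shows $\pSchu_L$ need not be inherited from $\Schu_L$), and correctly identifies categorification as the paper's actual strategy. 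So far this is a fair and honest sketch, not a proof, which is appropriate.

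Two caveats though. First, your involutivity argument for $\pSchu_L$ presupposes that $\full_W$ acts diagonally on the $p$-canonical basis modulo $I^p_{<\la}$; but that diagonal action is itself part of what is being conjectured. The paper's unnamed Proposition in \S\ref{sec:intro} only shows that \emph{if} such a diagonal action exists, then the exponent is constant on two-sided $p$-cells -- it does not establish the diagonal action itself. Second, and more seriously, the monotonicity $\la < \mu \Rightarrow \pcb(\la) < \pcb(\mu)$ is the part for which you offer the least: the paper explicitly states that this ``is not currently accessible by computer,'' and your appeal to ``the strict refinement of KL cells by $p$-cells'' is both vague and factually wrong -- in type $C_3$ the paper exhibits a $p$-cell $\mu'$ that contains elements from two distinct $0$-cells, so $p$-cells do not refine $0$-cells. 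That clause of the conjecture currently has no argument, heuristic or otherwise, and remains genuinely open. (A small slip: the decategorified identity determines $\pxb$ exactly; it is only $\pcb$ that is pinned down merely modulo $2$.)
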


Conjecture \ref{conj:htdecat} has been verified for all Weyl groups in rank $\le 6$, though the ``Moreover'' statement is not currently accessible by computer. We discuss this below.

\begin{rmk} \label{rmk:butnotx} In type $(C_3, p = 2)$ there is an example where $\la < \mu$ but $\pxb(\la) > \pxb(\mu)$. See \eqref{C3eigenvalues}. \end{rmk}

\begin{ex} \label{ex:B2char2againnn} We continue Example \ref{ex:B2char2}. Since $b_w = \pb_w$ for many values of $w$, we see that \eqref{eq:phalfintro} holds for many values of $w$, where the $p$-cells $\la_0$ and $\la_{\pbug}$ and $\la_1$ have the same values for $\xb$ and $\cb$ as their $0$-cell counterparts. The interesting computations occur when $w \in \{s,sts\}$. Below we rewrite the $p$-canonical basis in the KL basis, apply \eqref{eq:halfintro}, and then reinterpret again in the $p$-canonical basis. We have
\begin{subequations}
\begin{eqnarray} \half_W \cdot \pb_{sts} = \half_W \cdot (b_s + b_{sts}) &\equiv& (-1) (b_{sts} + b_s) \text{ modulo } I_{\la_0} \\ \nonumber &=&  (-1) \pb_{sts} \text{ modulo } I^p_{\la_0}. \end{eqnarray}
Note that the ideals $I_{\la_0}$ and $I^p_{\la_0}$ agree. Thus \eqref{eq:phalfintro} holds for $\pb_{sts}$, though $\pSchu_L(sts) = sts$ in contrast to $\Schu_L(sts) = s$. Meanwhile,
\begin{eqnarray} \label{scomp} \half_W \cdot \pb_s &\equiv&  (-1) b_{sts} \text{ modulo } I_{\la_0} \\ \nonumber &=& -\pb_{sts} + \pb_s \text{ modulo } I^p_{\la_0} \\ \nonumber &\equiv& (+1) \pb_s \text{ modulo } I^p_{< \la_s}. \end{eqnarray}
\end{subequations}
This is consistent with $\pxb(\la_s) = 0$ and $\pcb(\la_s) = 2$ and $\pSchu_L(s) = s$. In \eqref{scomp} only the parity of $\pcb$ is evident, but the precise value $2$ comes from the categorification.
\end{ex}

Let us explain the simple reason why the value of $\pxb$ should be constant on two-sided $p$-cells.

\begin{prop} Suppose that $\full_W \cdot \pb_w = v^{2 \pxb(w)} \pb_w$ modulo lower terms, for some integer $\pxb(w)$ depending on $w$. Then $\pxb(w)$ is constant on two-sided
$p$-cells. \end{prop}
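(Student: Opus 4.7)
The plan is to exploit two facts: first, that $\full_W$ is central in $\HB(W)$ (as recalled in the introduction), and second, that membership in the same two-sided $p$-cell is defined exactly by the two-sided ideal structure with respect to the $p$-canonical basis modulo strictly lower cells.

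First I would fix a two-sided $p$-cell $\la$ and two elements $w, w' \in \la$. By definition of the two-sided $p$-cell order, there exist $h, h' \in \HB(W)$ such that
\[ h \cdot \pb_w \cdot h' \equiv \sum_{x \in \la} a_x \, \pb_x \pmod{I^p_{<\la}} \]
with $a_{w'} \neq 0$ (otherwise $w'$ would be in a strictly lower two-sided cell than $w$, and by symmetry $w$ lower than $w'$, a contradiction). Now I would compute $\full_W \cdot (h \pb_w h')$ modulo $I^p_{<\la}$ in two ways, using that $I^p_{<\la}$ is a two-sided ideal so that $h \cdot I^p_{<\la} \cdot h' \subseteq I^p_{<\la}$.

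On the one hand, centrality of $\full_W$ together with the hypothesis for $w$ gives
\[ \full_W \cdot h \cdot \pb_w \cdot h' = h \cdot (\full_W \cdot \pb_w) \cdot h' \equiv v^{2\pxb(w)} \sum_{x \in \la} a_x \, \pb_x \pmod{I^p_{<\la}}. \]
On the other hand, expanding first and then applying the hypothesis to each term $\pb_x$ with $x \in \la$ yields
\[ \full_W \cdot h \cdot \pb_w \cdot h' \equiv \sum_{x \in \la} a_x \, v^{2\pxb(x)} \pb_x \pmod{I^p_{<\la}}. \]
Comparing coefficients in the quotient $\HB(W)/I^p_{<\la}$, where $\{\pb_x\}_{x \in \la}$ is linearly independent, I conclude $a_x v^{2\pxb(w)} = a_x v^{2\pxb(x)}$ for each $x \in \la$. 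Taking $x = w'$, where $a_{w'} \neq 0$, forces $\pxb(w) = \pxb(w')$.

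There is essentially no obstacle here beyond correctly packaging the hypothesis: the proof is a standard "central element acts by a scalar on each cell" argument, and the only point to be careful about is that the hypothesis must be applied to every $\pb_x$ appearing in $h \pb_w h'$ modulo $I^p_{<\la}$, which is legitimate because all such $x$ lie in the same cell $\la$ as $w$ (any $x$ with $a_x \neq 0$ lies in $\la$ by construction, and those in lower cells are absorbed into $I^p_{<\la}$).
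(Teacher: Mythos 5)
Your argument is correct and rests on the same core principle as the paper's---centrality of $\full_W$ together with the cell ideal structure---but deploys it differently. The paper first shows $\pxb$ is constant on left $p$-cells (by viewing the left cell module as an indecomposable based module on which the central element $\full_W$ acts as an intertwiner with the $p$-canonical basis as eigenbasis, so distinct eigenvalues would split the module), then repeats for right $p$-cells, and implicitly passes from one-sided to two-sided constancy; that last passage needs one to connect any two elements of a two-sided $p$-cell by a chain of $\sim_L$ and $\sim_R$ relations, which in turn rests on the incomparability of one-sided $p$-cells inside a two-sided $p$-cell, a result the paper cites from \cite{JensenCellular} in a footnote but not inside this proof. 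You instead work directly with the two-sided preorder: you produce $h, h'$ with $a_{w'}\ne 0$ in $h\,\pb_w\, h'$ modulo $I^p_{<\la}$, then compare eigenvalue coefficients via linear independence of $\{\pb_x\}_{x\in\la}$ in the quotient. This bypasses the one-sided-to-two-sided step and makes the proof a bit more self-contained; the trade-off is that the paper's version also exhibits the sharper fact that $\pxb$ is constant separately on left $p$-cells and on right $p$-cells, which your argument does not isolate. Both arguments quietly use the nonnegativity of the $p$-canonical structure constants (coming from the categorification), yours in the step asserting the existence of $h,h'$ with nonvanishing coefficient, the paper's in the assertion that eigenspaces are based submodules.
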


\begin{proof} We first argue that the value of $\pxb(w)$ is constant on left $p$-cells. By definition, the left $p$-cell is indecomposable as a \emph{based} left module over the Hecke
algebra. The full twist $\full_W$ is in the center of the Hecke algebra, so acts by an intertwiner on any module. Under the assumption, the $p$-canonical basis of the left cell module
is an eigenbasis, so the eigenspaces are based submodules. If there were distinct eigenvalues then the left cell module would split as a based module, a contradiction.

Since $\full_W$ is central, right multiplication by $\full_W$ also has the $p$-canonical basis as an eigenbasis modulo lower terms, with the same eigenvalues. By the same argument, these eigenvalues are constant on right cells. \end{proof}

\subsection{Numerics and evidence}

Let us write $\xb(w) = \xb(\la)$ when $w \in \la$. For example, in type $C_2$ as above, we have $\xb(s) = \pxb(s) = 0$ and $\cb(s) = 1$ and $\pcb(s) = 2$. The behavior of $s$ is
typical: it seems that $\pcb(w) > \cb(w)$ when $w$ moves to a higher $p$-cell relative to its $0$-cell compatriots. In type $C_4$, the longest element $w$ of the parabolic subgroup
of type $A_3$ appears to have $\pcb(w) = \cb(w) + 2$. There is no reason to expect a global bound on $\pcb(w) - \cb(w)$ in general.

\begin{remark} A small rank heuristic is that $\pcb(w)$ equals the length of the longest chain of $p$-cells from $w$ to $\la_0$. However, this will certainly fail in general (it fails in type $D_4$), as the values of $\pcb$ may skip numbers. \end{remark}

From the example of type $C_2$ one might get the false impression that each $p$-cell $\la'$ is contained in a unique $0$-cell $\la$, but this is not true in general. Moreover, it need
not be the case\footnote{Note that $\pxb(w) = \xb(x)$ for some $x$, since the eigenvalues of the full twist do not depend on the choice of basis.} that $\xb(w) = \pxb(w)$.
Counterexamples arise in type $C_3$, related to the appearance of a non-perverse $p$-canonical basis element. In the same counterexample is a fascinating surprise: two elements swap
eigenvalues! A thorough discussion of this computation, and musings on its categorical significance, can be found in \S\ref{sec:C3}.

No general theory currently exists for determining the values of $\pxb(\la)$ or $\pcb(\la)$. Many other features of ordinary cell theory also do not yet have
analogues for the $p$-canonical basis. The naive analogue of Lusztig's $\ab$-function is not constant on $p$-cells, and no suitable replacement is known\footnote{If Conjecture \ref{conj:htdecat} holds, then following \eqref{cxdef} one could define $\pab(w) := \pcb(w) - \pxb(w)$. This statistic is not monotone over the cell order. Whether this is a satisfactory analogue of Lusztig's $\ab$-function for other purposes remains to be seen.}. The longest element $w_0$ does
not permute the $p$-cells. There is a conjectural definition of $p$-distinguished involutions due to the second author, but the theory is not fully developed. This conjecture does not
yet appear in print elsewhere so we place it below as Conjecture \ref{conj:distinv}. These are the major tools used to define $\xb$ and $\cb$ and $\Schu_L$, and they are not yet
operational for $p$-cell theory. The techniques used by Mathas do not seem to adapt well to the $p$-canonical basis.

\begin{notation} \label{notation:val} For a Laurent polynomial $f$ write $\val(f)$ for the smallest exponent appearing with nonzero coefficient. E.g. $\val(2v^{-1} + 3v) = -1$. Let $\mu_{w,x}^y$ be the coefficient of $\pb_y$ inside the $p$-canonical basis expansion of $\pb_w \cdot \pb_x$. Let $h_x^y$ be the coefficient of the standard basis element $H_y$ inside $\pb_x$. We use the same notation for the ordinary KL basis as well, when the context calls for it. The notation $\ab(w)$ or $\ab(\la)$ will never be used in relation to the $p$-canonical basis and its coefficient, and always refers to Lusztig's usual $\ab$-function. \end{notation}

\begin{conj} \label{conj:distinv}  In each left $p$-cell there is a unique involution $d$ for which $-\val(\mu_{d,d}^d) \ge \val(h_d^1)$. We call these the \emph{$p$-distinguished involutions}. \end{conj}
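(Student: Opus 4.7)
The plan is to emulate the structure of Lusztig's classical proof of existence and uniqueness of distinguished involutions in each left cell, with the valuation $\val(h_d^1)$ playing the role of Lusztig's $\ab$-function. Before separating into existence and uniqueness, I would first establish a compatibility lemma: for every $w$ in a fixed left $p$-cell $\mathcal{C}$, one always has $-\val(\mu^w_{w,w}) \le \val(h_w^1)$, so that the conjecture really asserts that a unique involution attains this upper bound. This inequality should follow by combining positivity of the $p$-canonical structure constants (since $\pb_w \cdot \pb_w$ is an $\N$-linear combination of $\pb_y$'s in the modular Hecke category) with bar-invariance of the $\pb_y$.

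For existence, I would take an involution $w \in \mathcal{C}$ minimizing $\val(h_w^1)$. Such a candidate exists because $\mathcal{C}$ is finite, modulo checking that $\mathcal{C}$ contains at least one involution; this last point should follow by using the symmetry $\mu^z_{x,y} = \mu^{z^{-1}}_{y^{-1},x^{-1}}$ to show that $\mathcal{C}$ is closed under $w \mapsto w^{-1}$ and then picking a minimal-length representative. One then verifies the inequality by directly expanding $\pb_w \cdot \pb_w$ and isolating the bottom-layer contribution of $\pb_1$ to $h_w^1$.

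For uniqueness, suppose $d_1 \ne d_2$ were two involutions in $\mathcal{C}$ both satisfying the inequality. I would compute $\pb_{d_1} \cdot \pb_{d_2}$, extract the coefficient of a suitable minimal element, and use both extremality hypotheses---one on the left, one on the right---to force a contradiction. Morally this is the statement that the left $p$-cell supports a unique ``bottom'' element playing the role of an identity in a putative asymptotic modular Hecke algebra.

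The main obstacle is that, as emphasized throughout Section \ref{sec:intro}, the $p$-canonical setting lacks Lusztig's asymptotic Hecke algebra $J$. Classically the distinguished involutions are precisely the identity elements of the two-sided cell blocks of $J$, and both existence and uniqueness in each left cell follow almost immediately from that structure. Here no $J$ is available, and the natural candidate $\pab(w) := \pcb(w) - \pxb(w)$ coming from Conjecture \ref{conj:htdecat} is not monotone on the $p$-cell order (Remark \ref{rmk:butnotx}), so the usual cell-filtration monotonicity arguments are unavailable. I expect uniqueness to be the crux, and genuinely new input is likely needed: either a modular replacement for $J$ built from the Hecke category in characteristic $p$, or a way of leveraging Conjecture \ref{conj:htdecat} to pin down distinguished involutions as those fixed by $\pSchu_L$ having extremal $\val(h_w^1)$.
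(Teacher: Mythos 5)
This statement is labeled as a conjecture in the paper precisely because no proof is known; the paper offers no argument for it, so there is nothing to compare your proposal against. That said, your proposed strategy contains a step that is not merely unproven but provably false.

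Your first move is to establish a ``compatibility lemma'' asserting that $-\val(\mu^w_{w,w}) \le \val(h_w^1)$ for all $w$ in the cell, so that the conjectured inequality is really an equality attained by a unique involution. This contradicts the remark immediately following the conjecture, which states that while $-\val(\mu_{d,d}^d) \le \val(h_d^1)$ holds for the KL basis (with equality exactly at distinguished involutions), for the $p$-canonical basis $-\val(\mu_{d,d}^d) > \val(h_d^1)$ \emph{can} occur. That strict inequality is the entire reason the conjecture is phrased with $\ge$ rather than $=$: the authors are precisely pointing out that the classical upper bound fails in characteristic $p$. Your justification for the lemma---positivity of structure constants plus bar-invariance---applies equally to the KL basis, so it cannot be the right mechanism; and since the inequality genuinely fails for the $p$-canonical basis, no correct argument for it exists. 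A proof built on this lemma cannot go through.

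Beyond that, the rest of the sketch (selecting an involution minimizing $\val(h_w^1)$, arguing uniqueness via products $\pb_{d_1}\pb_{d_2}$) is plausible in outline but transplants the Lusztig machinery without confronting its failure in this setting, as you yourself note at the end: the $p$-analogue of the $\ab$-function is not monotone, no asymptotic algebra $J$ is available, and it is not even clear that left $p$-cells are closed under inversion in a way that yields involutions by the classical argument (the $p$-cell order is not yet proven to behave like the $0$-cell order). Your closing paragraph correctly identifies that genuinely new input is needed; that honest assessment is the most accurate part of the proposal, and it explains why the statement remains a conjecture in the paper.
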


\begin{rmk} \label{rmk:distinvinequality} For the KL basis, $-\val(\mu_{d,d}^d) \le \val(h_d^1)$ for all $d \in W$ (involution or not), while only distinguished involutions have an equality, in which case $\val(h_d^1) = -\val(\mu^d_{d,d}) = \ab(d)$. For the $p$-canonical
basis, $-\val(\mu_{d,d}^d) > \val(h_d^1)$ can occur. \end{rmk}


The involution $\pSchu_L$ is also mysterious. Often it is forced to disagree with the characteristic zero involution $\Schu_L$, as an orbit of size two in a left $0$-cell gets split into distinct $p$-cells. This happens in Example \ref{ex:B2char2againnn} with the orbit $\{s,sts\}$. What happens can be unpredictable, as seen in the following example.

\begin{ex} In type $G_2$ one has $\Schu_L(s) = ststs$ and $\Schu_L(sts) = sts$. When $p = 3$ the element $s$ forms its own two-sided cell, and instead we have $\pSchu_L(s) = s$ and $\pSchu_L(sts) = ststs$. See \S\ref{app:rank2}. \end{ex}

Let us discuss the evidence for Conjecture \ref{conj:htdecat}.

First we consider type $A$. In type $A_n$ for $n \le 6$, the KL basis and the $p$-canonical basis agree for all primes. The second author proved in \cite[Theorem 4.33]{JensenABC} that
the $p$-cells agree with the $0$-cells in type $A_n$ for all $n$, even though the $p$-canonical basis can be quite different from the KL basis. Although the combinatorics of $p$-cells
is unchanged\footnote{It is only the combinatorics of two-sided cells which is known to be unchanged. There are no known examples where the partial order on left $p$-cells does change, but there is currently no proof that it doesn't change. Thankfully, it was proven in \cite[Corollary 4.8]{JensenCellular} that left $p$-cells within a given two-sided $p$-cell are still incomparable.}, the actual cell ideals in the Hecke algebra do change (e.g. $I_{\le \la} \ne I^p_{\le \la}$), because $\pb_w - b_w$ often involves KL basis elements in higher cells than $w$.
This makes it hard to compare the associated graded of the cell filtration between the ordinary and $p$-canonical settings. Worse still, starting in type $A_{11}$, Williamson
\cite{WillSameCell} found examples where $\pb_w - b_w$ involves KL basis elements in the same cell as $w$. This does not invalidate Conjecture \ref{conj:htdecat}, it only makes it more interesting if true, because \eqref{eq:halfintro} and \eqref{eq:phalfintro} truly diverge. Sadly, computing the $p$-canonical basis in type $A_{11}$ is beyond the reach of current computer programs.

\begin{rmk} In recent work, Lanini and McNamara \cite{LaniniMcNamara21} give a general method whereby a nonzero difference $\pb_w - b_w$ for some $w \in S_n$ will give rise to a nonzero difference $\pb_y - b_y$ living in the same cell, for a certain $y \in S_N$ with $N > n$. \end{rmk}

\begin{rmk} Suppose that the left module for the Hecke algebra coming from a left $p$-cell is isomorphic to the ordinary left $0$-cell module as based modules. Then \eqref{eq:halfintro} implies \eqref{eq:phalfintro} for this cell. The second author proved in \cite[Corollary 4.39]{JensenABC} that all left $p$-cell modules in the same two-sided $p$-cell are isomorphic in type $A$. Unpublished computer calculations done by Williamson suggest that there are no unusual $W$-graphs in type $A_n$ with $n \le 10$, which should imply that the $p$-cell and the $0$-cell give isomorphic based modules in these ranks. \end{rmk}

Outside of type $A$, the $p$-canonical basis is only known in ranks $\le 6$, though much of this data is not yet in print. In rank $\le 4$, one can
find the $p$-canonical basis and the additional code which verifies Conjecture \ref{conj:htdecat} online \cite{HeckeCode}.
These $p$-canonical bases were found by a mix of theoretic work and computer calculation due to Geordie Williamson and the second author. The additional code was written by Joel
Gibson, who has summarized the results of this verification in the appendix.

\subsection{Action of the half twist complex} \label{sec:halftwistcomplexintro}

The formula \eqref{eq:halfintro} was recently categorified in type $A$ by the first author and Matt Hogancamp in \cite[Proposition 4.31, Theorem 6.16]{EHDiag2}, which we now explain.

Let $\HC^0(W)$ denote the Hecke category defined over a field of characteristic $0$. It is a graded additive monoidal category whose Grothendieck group is isomorphic as a ring to the Hecke algebra. In characteristic zero the Hecke category appears in many guises (e.g. using Soergel bimodules, or equivariant perverse sheaves on flag varieties, or projective functors on category $\OC$). In Definition \ref{defn:heckecategory} we review a presentation of the Hecke category by generators and relations. Further references can be found there.

For now we just recall the essential properties of $\HC^0(W)$. It is monoidally generated by objects denoted $B_s$, one for each simple reflection. Its indecomposable objects are
parametrized up to grading shift and isomorphism by $\{B_w\}_{w \in W}$, and the objects $\{B_w\}$ categorify the KL basis $\{b_w\}$. The cell ideals $I_{< \la}$ lift to monoidal ideals
$\IC_{< \la}$. Rouquier \cite{RouqBraid-pp} indicated how one should construct a chain complex for any word in the generators of the braid group of $W$, which depends only on the braid up
to unique homotopy equivalence. Thus to any braid, including the half twist and full twist braids, one has a well-defined object in the bounded homotopy category $K^b(\HC^0)$. One
typically studies Rouquier complexes (and other objects of the homotopy category) by considering their \emph{minimal complex}, the complex obtained after using homotopy equivalence to
remove contractible direct summands. The minimal complex is unique up to isomorphism of complexes (rather than up to homotopy equivalence), and its chain objects in various homological
degrees are invariants of the braid.

\begin{conj} \label{conj:htcatchar0} (See \cite[Conjecture 4.30]{EHDiag2}) Let $W$ be a finite Coxeter group, and  let $\HT_W$ denote the half-twist Rouquier complex in the homotopy category $K^b(\HC^0)$. We have \begin{equation} \label{eq:HTintro} \HT_W \ot B_w \cong \left(
\ubr{\ldots}{\IC_{< \la}} \to B_{\Schu_L(w)}(\xb(\la))[\cb(\la)] \to 0 \right). \end{equation} That is, the indecomposable object $B_w$ is sent by $\HT_W$ to a complex,
whose minimal complex consists of \begin{itemize} \item one copy of $B_{\Schu_L(w)}$ in homological degree $\cb(\la)$ and with a grading shift by
$\xb(\la)$, \item various objects in strictly lower cells and strictly lower homological degree. \end{itemize} \end{conj}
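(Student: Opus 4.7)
The plan is to combine decategorification with a structural analysis of the minimal complex, and then induct on the cell order. First I would pass to the split Grothendieck group. By Theorem \ref{thm:halfintrodecat}, the class of $\HT_W \otimes B_w$ equals $(-1)^{\cb(\la)} v^{\xb(\la)} b_{\Schu_L(w)}$ modulo the image of $\IC_{< \la}$. This pins down the graded Euler characteristic of the minimal complex; the content of the conjecture is then a rigidity assertion, namely that this Euler characteristic is realized by a unique copy of $B_{\Schu_L(w)}(\xb(\la))$ sitting alone in homological degree $\cb(\la)$, with every other indecomposable summand lying in strictly lower cells and strictly lower homological degrees.

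Next I would exploit standard properties of Rouquier complexes. Because $\HT_W$ is the Rouquier complex of a positive braid of length $\ell(w_0)$, it is supported in homological degrees $[0,\ell(w_0)]$, hence so is $\HT_W \otimes B_w$. Moreover, tensoring with any Rouquier complex preserves the two-sided monoidal ideals $\IC_{< \la}$, because they are the preimages of two-sided cell ideals under the decategorification map and the braid group acts by conjugation-invariant automorphisms on $K_0$. I would therefore pass to the quotient $\HT_W \otimes B_w$ modulo $\IC_{< \la}$ and argue that its minimal complex consists of a single indecomposable in a single homological degree. Matching the parity of that degree against the sign $(-1)^{\cb(\la)}$ appearing in the Euler characteristic forces the degree to be $\cb(\la)$; comparing with Mathas's cell-theoretic uniqueness in \cite{Mathas96} then identifies the indecomposable as $B_{\Schu_L(w)}(\xb(\la))$, simultaneously recovering the involution $\Schu_L$ categorically.

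The remaining claim, that the lower-cell summands lie in strictly lower homological degrees than $\cb(\la)$, is the main obstacle and I would attack it by downward induction on the cell order. The base case is the lowest cell $\la_0 = \{w_0\}$, where $\HT_W \otimes B_{w_0}$ is already minimal and concentrated in homological degree zero with grading shift $-\ell(w_0)$, matching $\cb(\la_0) = 0$ and $\xb(\la_0) = -\ell(w_0)$. The inductive step requires a resolution of $B_w$ by Soergel bimodules in strictly lower cells together with a top-cell piece, whose image under $\HT_W$ has cell-controlled homological support. In type $A$, the categorical eigenmap machinery of \cite{EHDiag2} supplies exactly such parity-style resolutions; outside type $A$ no general construction is available, which is the essential reason the conjecture remains open in general. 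For the type $C_2$ result proven in this paper, I would fall back on explicit diagrammatic computation: the Hecke category is small enough that the minimal complex of $\HT_W \otimes B_w$ can be worked out by hand for each $w$, and I expect the bulk of the paper's technical work to consist of precisely these calculations, organized by cell so as to verify both the uniqueness of the top term and the strict homological bound on lower-cell contributions.
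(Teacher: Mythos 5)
The item you are asked to prove is stated as a \emph{conjecture} (Conjecture~\ref{conj:htcatchar0}), and the paper offers no general proof of it: it is attributed to \cite{EHDiag2}, where it is proven only in type~$A$, with dihedral types flagged as work in preparation. What the present paper actually does in characteristic zero is compute the minimal complexes $\HT \ot B_w$ explicitly in type $C_2$ (see \eqref{HTchar0}, \eqref{HTBschar0}, \eqref{HTBsts} and the methods of \S\ref{C2methods}), which verifies the conjecture there by brute force. So there is nothing to compare your proposal against except that computation, and you correctly identify the computational fallback as what the paper does.

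As a proposed route to the general statement, however, your outline has a genuine gap at the step where you claim that ``matching the parity of that degree against the sign $(-1)^{\cb(\la)}$ appearing in the Euler characteristic forces the degree to be $\cb(\la)$.'' Parity does not force the degree. The paper itself emphasizes this point: ``the precise positions of objects in particular homological degrees are not visible in the Grothendieck group (only the parity of the homological shift is seen).'' Knowing the Euler characteristic, plus boundedness in $[0,\ell(w_0)]$, plus the assertion that only one object survives modulo $\IC_{<\la}$, still leaves the homological degree undetermined up to even shifts; nothing you have written pins it down to equal $\ab(w_0\cdot\la)$. The actual mechanism used in type~$A$ in \cite{EHDiag2} relies on perversity of $\HT$ and the eigenmap machinery of categorical diagonalization to control exactly which homological degree carries the top term; you gesture at this but do not supply a substitute. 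A smaller inaccuracy: tensoring with a Rouquier complex preserves $\IC_{<\la}$ simply because it is a two-sided monoidal ideal, not because of any invariance of the braid action on $K_0$. Finally, ``passing to the quotient and arguing that the minimal complex modulo $\IC_{<\la}$ is a single indecomposable in a single degree'' is precisely the hard part of the conjecture; stating it as something you ``would argue'' without a mechanism is not a proof step but a restatement of the goal.
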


This conjecture was proven in type $A$ in \cite[Proposition 4.31, Theorem 6.16]{EHDiag2}. A proof for dihedral types is work in preparation.

That \eqref{eq:HTintro} categorifies and implies \eqref{eq:halfintro} is obvious, though the precise positions of objects in particular homological degrees are not visible in the Grothendieck group (only the parity of the homological shift is seen). We must emphasize a part of \eqref{eq:HTintro} which is invisible in
the Grothendieck group: the fact that $B_{\Schu_L(w)}(\xb(\la))[\cb(\la)]$ is the unique object in the \emph{maximal} homological degree\footnote{In the terminology of \cite[\S 3.4]{EHDiag2}, the conjecture implies that the half twist is \emph{sharp}: it is a fine enough tool to effectively separate cells using homological degree.} of $\HT_W \ot B_w$. Since the inclusion of the final term in a bounded chain complex is a chain map, this means there is a chain map from the one-term complex containing
$B_{\Schu_L(w)}(\xb(\la))[\cb(\la)]$ into $\HT \ot B_w$. This chain map becomes a homotopy equivalence modulo $\IC_{< \la}$. In the next section we explain in what sense this chain map is functorial. First we discuss characteristic $p$.

Suppose $W$ is crystallographic. Using the same presentation by generators and relations, one can define the Hecke category $\HC^p(W)$ over a field of characteristic $p > 0$. Again, see Definition \ref{defn:heckecategory} for details. It is still generated by objects denoted $B_s$. The indecomposable objects in $\HC^p(W)$ are denoted
$\{{}^p B_w\}_{w \in W}$, and are still parametrized by $W$, but what they are is mysterious. Again, when the prime $p$ is understood we write $\pB_w$ instead of ${}^p B_w$ for
ease of reading. Note that $\pB_s = B_s$. The Grothendieck group of $\HC^p(W)$ is still isomorphic as a ring to the Hecke algebra. The $p$-canonical basis $\{\pb_w\}$ is defined to be the images of the symbols of the indecomposable objects $\{\pB_w\}$ (for a more formal definition, see Definition \ref{defn:pcanbasis}). In particular, the $p$-cells
correspond to monoidal ideals $\IC^p_{< \la}$ which differ from their characteristic zero counterparts.

Rouquier complexes make perfect sense inside $K^b(\HC^p)$, so that the half twist and full twist complexes are still well-defined. These complexes are largely unstudied and mysterious, having differently-behaved minimal complexes relative to their characteristic zero counterparts. For example, the half twist in $K^b(\HC^0)$ is \emph{perverse} in that each indecomposable summand of a chain object has a grading shift equal to its homological shift. This is false for the half twist in $K^b(\HC^p)$. For example, in type $C_2$, \eqref{HTchar0} demonstrates the minimal complex of the half twist in characterstic zero, which is perverse. In contrast, \eqref{HTchar2} demonstrates the minimal complex in characteristic $2$, where one summand sticks out like a sore thumb: a copy of $B_s(1)$ in homological degree $2$.

\begin{conj} \label{conj:htcat} Fix a Weyl group $W$ and a prime $p$. Let $\pxb$, $\pcb$, and $\pSchu$ be defined as in Conjecture \ref{conj:htdecat}. Then for any $w$ inside the $p$-cell $\la$ we have
\begin{equation} \label{eq:HTintrop} \HT_W \ot \pB_w \cong \left( \ubr{\ldots}{\IC^p_{< \la}} \to \pB_{\pSchu_L(w)}(\xb^p(\la))[\cb^p(\la)] \to 0 \right). \end{equation}
\end{conj}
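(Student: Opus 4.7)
The plan is to attack the conjecture in two stages: first establish the decategorified Conjecture \ref{conj:htdecat}, then lift it to the homotopy category. For the first stage, I would begin by noting that $\full_W$ is central in $\HB(W)$, so by the eigenvalue argument already given in the excerpt (right after Example \ref{ex:B2char2againnn}), once one knows $\full_W$ acts diagonally on each based left $p$-cell module there is a single eigenvalue $v^{2\pxb(\la)}$ attached to each two-sided $p$-cell. The key input is therefore that $\full_W$ acts on the $p$-canonical basis of each left $p$-cell module by a scalar modulo lower terms. I would try to extract this by studying the action of $\half_W$ directly: since $b_{w_0} \pb_w$ expands in standard-basis elements supported on $w_0 y$ with $y \le w$, the standard argument shows $\half_W \pb_w$ has a top term whose coefficient is $\pm v^{\pxb(\la)}$, and indecomposability of the $p$-cell module as a based module will force the image (modulo lower cells) to be a single basis vector, defining $\pSchu^p_L(w)$. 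This replaces Mathas' use of Lusztig's $\ab$-function, which has no available $p$-analogue.

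For the second (categorical) stage I would follow the Elias-Hogancamp strategy from \cite{EHDiag2}. The Rouquier complex $\HT_W$ is invertible in $K^b(\HC^p)$, and $\HT_W \ot \pB_w$ has a minimal complex whose Euler characteristic is given by the decategorified formula \eqref{eq:phalfintro}. One then argues that the chain objects in $\IC^p_{<\la}$ assemble into a subcomplex, with quotient contained in the based module for the two-sided $p$-cell of $w$. The key categorical input is that on the cell-subquotient level the half twist acts by an explicit shift and an involution, so the quotient minimal complex has exactly one indecomposable summand, namely $\pB_{\pSchu^p_L(w)}(\pxb(\la))[\pcb(\la)]$. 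The remaining work is to control which homological degree the various lower-cell summands live in: the required bound is that every lower-cell summand of $\HT_W \ot \pB_w$ lives in strictly smaller homological degree than $\pcb(\la)$, which is the sharpness statement.

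The main obstacle is pinning down the homological degree $\pcb(\la)$. In characteristic $0$ Elias-Hogancamp use that $\HT_W$ is perverse, so homological and internal grading degrees are locked together and the sharpness bound follows from the fact that $\pxb$ is strictly increasing down the cell order. As explicitly warned in the excerpt (the paragraph after Conjecture \ref{conj:htcatchar0}), the characteristic $p$ half twist is \emph{not} perverse and the two gradings decouple. Worse, by Remark \ref{rmk:butnotx} the function $\pxb$ need not even be monotone on $p$-cells in type $C_3$. Therefore one cannot read $\pcb$ off from $\pxb$, and the best replacement is the monotonicity of $\pcb$ itself, which is part of Conjecture \ref{conj:htdecat} and whose proof I expect to be the genuinely hard step, essentially equivalent to constructing explicit chain maps between the top terms of $\HT_W \ot \pB_w$ for comparable cells.

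A subsidiary difficulty is identifying the precise involution $\pSchu^p_L$. In characteristic zero Mathas has a cell-theoretic characterization; in characteristic $p$ we have none, and it is not even an orbit restriction of $\Schu_L$ (cf.\ Example in $G_2$). I would therefore treat \eqref{eq:phalfintro} as the definition of $\pSchu^p_L$ once existence of a one-term top is established, and prove involutivity a posteriori using $\HT_W^2 = \FT_W$ together with the fact (to be proved alongside) that $\FT_W \ot \pB_w$ has minimal top term $\pB_w(2\pxb(\la))[2\pcb(\la)]$. Establishing Conjecture \ref{conj:distinv} en route would give the combinatorial handle needed to control $\pSchu^p_L$ in general.
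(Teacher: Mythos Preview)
The statement you are attempting to prove is presented in the paper as an open \emph{conjecture}, not a theorem. The paper does not prove it in general; it only establishes the special cases $A_2$, $C_2$, $G_2$ (the dihedral Weyl groups), and does so by brute-force computation: explicit Gaussian elimination of $\HT \ot \pB_w$ for every $w$, using the diagrammatic calculus of \S\ref{sec:C2diag} and the methods described in \S\ref{C2methods}. There is no structural argument of the kind you outline.

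Your proposal is a reasonable sketch of what a general attack might look like, and you correctly identify the main obstacles (failure of perversity, non-monotonicity of $\pxb$, absence of a $p$-analogue of Lusztig's $\ab$-function). But neither stage of your argument actually closes. In Stage~1, the claim that ``$\half_W \pb_w$ has a top term whose coefficient is $\pm v^{\pxb(\la)}$'' in the standard basis does not yield a single $p$-canonical basis element modulo $I^p_{<\la}$: translating one standard-basis term back into the $p$-canonical basis produces an uncontrolled $\Z[v,v^{-1}]$-combination, and the indecomposability of the based cell module only forces eigenvalues to agree \emph{given} that the action is already diagonal (this is exactly the conditional Proposition following Example~\ref{ex:B2char2againnn}). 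You have not supplied the missing input, namely that the action \emph{is} diagonal. In Stage~2, even granting the decategorified formula \eqref{eq:phalfintro}, you cannot conclude that the quotient minimal complex has a single indecomposable summand: the paper explicitly warns (\S\ref{sec:C3}, discussion of the ``cancellation phenomenon'') that in characteristic $p$ the minimal complex can contain pairs of summands in adjacent homological degrees which cancel in the Grothendieck group, and exhibits this in \eqref{FTchar2}. So the Euler characteristic does not determine the chain objects, and your ``quotient has one summand'' step is exactly the content of the conjecture rather than a consequence of the decategorified version.

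In short: your outline is an honest inventory of the difficulties, but it does not resolve them, and the paper does not claim to either --- it treats the general statement as open and settles only the rank-$2$ cases by direct calculation.
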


This conjecture is not obvious even in type $A$, where the $p$-cells agree with the $0$-cells. How this conjecture plays out in examples is rather interesting.

\begin{ex} \label{ex:2torsion}
We continue the example of type $C_2$ in characteristic $2$. Note that $\pB_s = B_s$ and $\pB_{sts} = B_s B_t B_s$. Here is the minimal complex for $\HT \ot \pB_s$ (formulas for the differential can be found in \eqref{HTBschar2}):
\begin{equation} \label{HTBschar2intro} \HT \ot \pB_s \cong \left(
\begin{tikzpicture}
\node (a) at (0,0) {$\un{B}_{w_0}(-1)$};
\node (b) at (2.5,0) {$B_s B_t B_s(0)$};
\node (c) at (5,0) {$B_s(0)$};
\path
	(a) edge (b)
	(b) edge (c);
\end{tikzpicture} \right). \end{equation}
This matches Conjecture \ref{conj:htcat} with $\pcb(s) = 2$ and $\pSchu_L(s) = s$. The inclusion $\iota$ of $B_s(0)[2]$ into the last degree of \eqref{HTBschar2intro} is a homotopy equivalence modulo lower cells.

In fact, one can define the complex \eqref{HTBschar2intro} over $\Z$, and then specialize to other base rings. After inverting $2$, the differential from $B_s B_t B_s$ to $B_s$ is projection to a summand, and the complementary summand is $B_{sts}$. Applying Gaussian elimination easily transforms \eqref{HTBschar2intro} into
\begin{equation} \label{HTBschar0intro} \HT \ot B_s \cong \left(
\begin{tikzpicture}
\node (a) at (0,0) {$\un{B}_{w_0}(-1)$};
\node (b) at (2.5,0) {$B_{sts}(0)$};
\path
	(a) edge (b);
\end{tikzpicture} \right). \end{equation}
This matches Conjecture \ref{conj:htcatchar0} with $\cb(s) = 1$, since $\Schu_L(s) = sts$.

Note that \eqref{HTBschar2intro} is an indecomposable complex over $\Z$, and only becomes decomposable when $2$ is inverted. The chain map $\iota$ can also be defined over $\Z$, and $2 \iota$ is nulhomotopic! \end{ex}

\begin{thm} Conjecture \ref{conj:htcat} holds for the dihedral Weyl groups $A_2$, $C_2$, and $G_2$. \end{thm}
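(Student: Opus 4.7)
The plan is to verify Conjecture \ref{conj:htcat} directly for each of $A_2$, $C_2$, and $G_2$ by explicit diagrammatic computation. A first reduction is that for any prime $p$ at which the $p$-canonical basis agrees with the Kazhdan-Lusztig basis---so that $\pB_w = B_w$ for all $w$---the statement collapses to Conjecture \ref{conj:htcatchar0}, whose proof in dihedral types is the work in preparation invoked in \S\ref{sec:halftwistcomplexintro}. The remaining content of the theorem lies at the exceptional primes: none for $A_2$, only $p=2$ for $C_2$, and $p=2$ and $p=3$ for $G_2$.

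For each exceptional case, the procedure is uniform. Using the known $p$-canonical bases in these ranks, we write each indecomposable $\pB_w$ as an explicit idempotent summand of a Bott--Samelson bimodule. We then build the half-twist Rouquier complex $\HT_W$ as the tensor product of $\ell(w_0)$ Rouquier complexes for the simple reflections. Tensoring with $\pB_w$, decomposing the resulting Bott--Samelson summands into $p$-indecomposables, and repeatedly applying Gaussian elimination to cancel adjacent isomorphic summands connected by a unit differential, we reduce to the minimal complex.

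The decategorified Conjecture \ref{conj:htdecat}, which has been checked by computer in these ranks, pins down in advance the graded multiplicity of each $\pB_y$ that must survive, as well as the integers $\pxb(\la)$ and $\pcb(\la)$ at which the putative top term sits. The task is then to confirm that after Gaussian elimination the surviving summands occupy the expected homological degrees, and in particular that the unique top-degree survivor is precisely $\pB_{\pSchu_L(w)}(\pxb(\la))[\pcb(\la)]$ with all other survivors lying in $\IC^p_{<\la}$ and in strictly lower homological degree. Type $C_2$ at $p=2$ is worked out in full in the main body of the paper and serves as the template; type $A_2$ is subsumed in the Elias--Hogancamp theorem for type $A$ from \cite{EHDiag2}; type $G_2$ at $p=2,3$ is treated in parallel with the $C_2$ analysis.

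The main obstacle is controlling differentials at the bad primes. The phenomenon exemplified in \eqref{HTBschar2intro} is characteristic: a chain map that in characteristic zero is a projection onto a direct summand becomes, at the bad prime, part of a strictly longer indecomposable complex, thereby moving the top term from $B_{\Schu_L(w)}$ to a different $\pB_{\pSchu_L(w)}$. Ruling out hidden cancellations and verifying that each Gaussian elimination step survives reduction mod $p$ requires careful diagrammatic calculation with $p$-torsion morphisms, and this is where the technical work is concentrated.
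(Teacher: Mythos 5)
Your overall methodology (brute-force computation of the half twist and its action on indecomposables, followed by Gaussian elimination, with the decategorified data as a guide) matches the paper's, and type $C_2$ at $p=2$ is indeed the template worked out in \S\ref{C2half}--\S\ref{C2methods}. However, your ``first reduction'' has a real gap. You claim that at a prime $p$ where the $p$-canonical basis agrees with the Kazhdan--Lusztig basis, Conjecture \ref{conj:htcat} collapses to Conjecture \ref{conj:htcatchar0}. This is not a formal collapse: the two conjectures concern categories over fields of different characteristics, and asserting ``$\pB_w = B_w$'' only means that the same idempotent over $\Z$ cuts out the top summand on both sides. Whether the minimal complex of $\HT_W \ot \pB_w$ in characteristic $p$ agrees (term-by-term, with matching differentials in the radical) with the characteristic-zero minimal complex is exactly the content that needs verifying, and it is not automatic. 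The paper does not make this reduction; instead it runs the Gaussian elimination \emph{over $\Z$}, producing a single integral minimal complex (e.g.\ \eqref{HTchar2}, \eqref{HTBschar2}) whose specializations to all base fields can be read off directly, and for $A_2$ it explicitly verifies characteristic-independence of the calculation rather than invoking any external theorem.

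More importantly, your plan invokes the work in preparation \cite{EHFullTwistDihedral} for Conjecture \ref{conj:htcatchar0} in dihedral types, which would make the theorem conditional on unpublished work --- but the theorem in the paper is stated unconditionally, precisely because the $\Z$-level computation handles every characteristic at once (and, as a byproduct, establishes Conjecture \ref{conj:htcatchar0} for $C_2$ and $G_2$; the logical dependence runs in the opposite direction from what you propose). Similarly, citing the Elias--Hogancamp type-$A$ theorem for the $A_2$ case is off target, since that theorem is proved in characteristic zero only and the conjecture here is a statement about $K^b(\HC^p)$. In short: your computational core at the exceptional primes is right, but you need to drop the reduction step and instead show directly that the $\Z$-defined minimal complexes remain minimal after base change to every residue field, which is what the paper's supplement \cite{C2supplement} does.
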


For dihedral groups it is tractable to compute $\HT$ and $\HT \ot \pB_w$ by brute force, which is how we prove the theorem. The main method is a tedious process of Gaussian elimination of
complexes, but we use some tricks to speed the process. In this paper we discuss the computation for $C_2$, omitting the case of $G_2$ for reasons of space. We also omit the computation
that type $A_2$ is characteristic independent. The results are far more interesting than the proofs; beyond explaining our methods and giving some illustrative examples, we spare the
reader most of the details. Full details of the computation for $C_2$ can be found in the supplementary document \cite{C2supplement}, which will eventually be enlarged to contain $G_2$ as
well.

We present our musings on type $C_3$ in \S\ref{sec:C3}.

\subsection{Diagonalizing the full twist: characteristic zero}

Earlier we alluded to the fact that the chain map
\[ B_{\Schu_L(w)}(\xb(\la))[\cb(\la)] \to \HT \ot B_w \]
might be functorial. If there were a functor $F^\la$ (depending on $\la$) which sent
\[ B_w \mapsto B_{\Schu_L(w)}(\xb(\la))[\cb(\la)],\] then ``functoriality'' would be the existence of a natural transformation $F^\la \to \HT \ot (-)$ giving rise to the chain map under discussion. However, no such functor $F^\la$ is expected to exist.

Instead, let us consider the full twist complex $\FT_W := \HT_W \ot \HT_W$. Even though $F^\la$ need not exist, its square would be $\one(2\xb(\la))[2\cb(\la)]$, where $\one$ is the identity functor. This functor is a categorification of the eigenvalue $(-1)^{2 \cb(\la)} v^{2 \xb(\la)}$ from \eqref{eq:fulltake2}. It is a relatively straightforward consequence of \eqref{eq:HTintro} (see \cite[Lemma 3.26 and Proposition 3.31]{EHDiag2}) that
\begin{equation} \label{eq:FTintro} \FT_W \ot B_w \cong \left( \ubr{\ldots}{\IC_{< \la}} \to B_w(2\xb(\la))[2\cb(\la)] \to 0 \right) \end{equation}
when $w$ is in cell $\la$.
The inclusion of the final term in this chain complex would be a chain map
\[ B_w(2\xb(\la))[2\cb(\la)] \to \FT_W \ot B_w, \]
which is a homotopy equivalence modulo $\IC_{< \la}$. This homotopy equivalence could be the action of a natural transformation
\[ \al_\la \co \one(2\xb(\la))[2\cb(\la)] \to \FT_W \]
when applied to the object $B_w$.

\begin{defn} Assume that \eqref{eq:FTintro} holds for all $w$ in (two-sided) cell $\la$. A natural transformation $\al_\la \co \one(2\xb(\la))[2\cb(\la)] \to \FT_W$ is called a
\emph{$\lambda$-eigenmap} if $\al_\la \ot B_w$ is a homotopy equivalence modulo $\IC_{< \la}$, for all $w$ in cell $\la$. Equivalently, one can ask that the map induced by $\al_{\la} \ot B_w$ in homological degree $2 \cb(\la)$ (to the minimal complex of $\FT \ot B_w$) is an automorphism of $B_w(2 \xb(\la))$. \end{defn}

An eigenmap represents the ``functorial relationship'' between the operator $\FT_W \ot (-)$ and its categorified eigenvalue. In \cite[Theorem 7.37]{EHDiag2} it was proven (with difficulty) that eigenmaps do indeed exist for each $0$-cell $\la$ in type $A$. This is conjectured (in characteristic zero) for all finite Coxeter groups in \cite[Conjecture 4.32]{EHDiag2}, and proven for dihedral groups in work in preparation \cite{EHFullTwistDihedral}. Here are several remarks on this result, before discussing its importance.

\begin{rmk} There are distinct cells $\la \ne \la'$ which $\full_W$ can not tell apart, satisfying $\xb(\la) = \xb(\la')$ and $\cb(\la) = \cb(\la')$. Conjecturally, $\FT_W$ can distinguish between these cells! We can find a chain map $\al$ which is an eigenmap for cell
$\la$ but not for cell $\la'$, and vice versa (see \cite[\S 2]{EHDiag} for a discussion of this complicated situation).
\end{rmk}

\begin{rmk} \label{rmk:indegree2c} The half twist $\HT_W$ is perverse, so the chain objects in its minimal complex are determined in the Grothendieck group. From this one can prove (see \cite[Proposition 4.21, Corollary 4.29]{EHDiag2}) that the chain objects appearing in homological degree $\cb(\la)$ and cell $\la$ are precisely $\bigoplus B_{\Schu_L(d)}(\cb(\la))$, where $d$ ranges over the distinguished involutions in $\la$. Meanwhile, the full twist is not perverse and much less is known about its minimal complex. One consequence of \eqref{eq:FTintro} and the existence of eigenmaps (see \cite[Proposition 4.31]{EHDiag2}) is that, for the chain objects of the full twist in homological degree $2\cb(\la)$, the summands in cell $\la$ are precisely $\bigoplus B_d(\xb(\la) + \cb(\la))$. \end{rmk}


In the Grothendieck group, the fact that $\full_W$ is diagonalizable with known eigenvalues implies that
\begin{subequations}
\begin{equation} \label{diagdecat} \prod_{\la} (\full_W - v^{2 \xb(\la)}) = 0. \end{equation}
A reasonable categorical lift of the operator $(\full_W - v^{2 \xb(\la)})$ would be the cone of the chain map $\al_{\la}$, the \emph{eigencone}. Then the categorical analogue of \eqref{diagdecat} would be
\begin{equation} \label{diagcat} \bigotimes_{\la} \Cone(\al_{\la}) \simeq 0, \end{equation}
\end{subequations}
in which case we say that $\FT_W$ is \emph{categorically (pre)diagonalizable}. If each $\al_{\la}$ is an eigenmap and certain homological obstructions\footnote{These obstructions measure to what extent the eigencones tensor-commute.} vanish, one can prove that $\FT_W$ is categorically prediagonalizable, see \cite[Proposition 3.42]{EHDiag2}.


Given a diagonalizable operator in linear algebra, Lagrange interpolation gives a method to contruct idempotents projecting to eigenspaces, allowing one to deduce that the vector space
is the direct sum of its eigenspaces. It is this eigenspace decomposition, rather than the equation \eqref{diagdecat}, which is commonly used in practice. In the categorical setup,
given a prediagonalizable endofunctor, one can hope for idempotent functors which project to \emph{eigencategories} (see Remark \ref{rmk:eigenobject} below). A direct sum decomposition
into eigencategories is too much to ask for, but one can hope that the category being acted upon has a filtration whose subquotients are eigencategories. A formalization of these hopes
is made in \cite[Definition 6.16]{EHDiag}, and a prediagonalizable endofunctor for which nice projection functors exist is called \emph{categorically diagonalizable}.

The main theorem of \cite{EHDiag} is a categorification of the Lagrange interpolation construction whereby, given a prediagonalizable functor whose eigenvalues are
\emph{homologically distinct} (i.e. different cells have different values of $\cb$), one can construct projection functors and prove categorical diagonalizability. In dihedral type
$\FT_W$ has homologically distinct eigenvalues. In type $A_n$ for $n \ge 5$ the full twist does not, though \cite{EHDiag2} is able to prove the categorical diagonalizability of the
full twist using additional techniques.

\begin{rmk} \label{rmk:eigenobject} Consider a chain map $\al$ (such as $\al_{\la}$ above) from a shift of the identity $\one(a)[b]$ to a complex $F$. An \emph{eigenobject} for $\al$ is a complex $M$ such that $\al \ot M : M(a)[b] \to F \ot M$ is a homotopy equivalence.
Eigenobjects are the objects in the \emph{$\al$-eigencategory}, a full triangulated subcategory of the homotopy category. Recall that the KL basis is not actually an
eigenbasis for $\full_W$, it is only an eigenbasis modulo lower cells. Similarly, $B_w$ is not a true eigenobject for $\al_{\la}$, only being an eigenobject modulo $\IC_{< \la}$.
However, using projection functors, one can construct genuine eigenobjects for each $\al_{\la}$, justifying the fact that they are called eigenmaps. Note that both projection
functors and eigenobjects are typically infinite complexes! Eigenobjects-modulo-lower-terms are easier to work with than true eigenobjects. \end{rmk}

\subsection{Diagonalizing the full twist: characteristic $p$}

Now we pass to characteristic $p$. There are potentially more $p$-cells than $0$-cells. Though they have the same set of eigenvalues $v^{2\xb(\la)}$ for $\full_W$ (powers of $v^2$), if one keeps track of the invisible factor of $(-1)^{2 \cb(\la)}$ then new ``eigenvalues'' appear. The main conjecture in this paper is the following, which states that these new $p$-cells also admit new eigenmaps.

\begin{conj} \label{conj:ftdiag} Fix a Weyl group $W$ and a prime $p$. Assume Conjecture \ref{conj:htcat}. For any $p$-cell $\la$ there exists a
chain map \begin{equation} \al_{\la} \co \one(2\pxb(\la))[2 \pcb(\la)] \to \FT_W \end{equation}
which is a \emph{$\la$-eigenmap}. This means that, for any $w \in \la$, $\al_{\la} \ot \pB_w$ is a homotopy equivalence modulo $\IC^p_{< \la}$. \end{conj}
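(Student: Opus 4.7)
The plan is to bootstrap Conjecture \ref{conj:ftdiag} from Conjecture \ref{conj:htcat} in two stages: first, establish the structure of the minimal complex of $\FT_W \ot \pB_w$, and second, upgrade the pointwise top-term inclusions to a single natural transformation $\al_\la$ out of $\one(2\pxb(\la))[2\pcb(\la)]$.

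For the first stage, I would apply Conjecture \ref{conj:htcat} twice. By the conjecture, the minimal complex of $\HT_W \ot \pB_w$ has top term $\pB_{\pSchu_L(w)}(\pxb(\la))[\pcb(\la)]$ with all other chain objects lying in $\IC^p_{<\la}$ and in strictly lower homological degree. Tensoring with $\HT_W$ on the left, and using that $\IC^p_{<\la}$ is a two-sided monoidal ideal (so $\HT_W \ot \IC^p_{<\la} \subset \IC^p_{<\la}$), the lower-cell contributions from the first application stay in $\IC^p_{<\la}$ at strictly lower homological degree. Applying Conjecture \ref{conj:htcat} a second time to $\HT_W \ot \pB_{\pSchu_L(w)}$ and using that $\pSchu_L$ is an involution, the top term becomes $\pB_w(2\pxb(\la))[2\pcb(\la)]$. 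This yields the $p$-analogue of \eqref{eq:FTintro}:
\[
\FT_W \ot \pB_w \cong \Bigl( \underbrace{\ldots}_{\IC^p_{<\la}} \to \pB_w(2\pxb(\la))[2\pcb(\la)] \to 0 \Bigr).
\]
The inclusion of the top summand gives, for each $w \in \la$, a chain map $i_w$ that is a homotopy equivalence modulo $\IC^p_{<\la}$.

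For the second stage, I would follow the strategy used in characteristic zero in \cite[Proposition 4.31]{EHDiag2}: identify inside the minimal complex of $\FT_W$ itself a distinguished summand of the form $\one(2\pxb(\la))$ sitting in homological degree $2\pcb(\la)$, and check that the canonical inclusion is a chain map (its image in homological degree $2\pcb(\la)+1$ must vanish). The existence of such a summand should be a $p$-analogue of the statement recalled in Remark \ref{rmk:indegree2c}, obtained by analyzing $\FT_W$ via a conjectural $p$-analogue of distinguished involutions (Conjecture \ref{conj:distinv}) and their contributions in top homological degree. Having built $\al_\la$ this way, its eigenmap property would then be essentially automatic: tensoring with $\pB_w$, the image lands in a complex whose only summand in homological degree $2\pcb(\la)$ modulo $\IC^p_{<\la}$ is $\pB_w(2\pxb(\la))[2\pcb(\la)]$, and nonvanishing of $\al_\la \ot \id_{\pB_w}$ modulo lower cells forces it to be the desired $i_w$ up to scalar.

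The main obstacle is globalization in the second stage. The pointwise maps $i_w$ are given essentially for free once the minimization in stage one has been carried out, but assembling them into a single natural transformation out of $\one$ requires finding a canonical $\one(2\pxb(\la))$-summand of $\FT_W$ in homological degree $2\pcb(\la)$ whose differential vanishes, and proving that tensoring with $\pB_w$ picks out the unique top-degree contribution modulo $\IC^p_{<\la}$. In characteristic zero this is orchestrated by the theory of distinguished involutions and Lusztig's $\ab$-function; in characteristic $p$, where Conjecture \ref{conj:distinv} is only conjectural and no $p$-analogue of $\ab$ is available, this structural input is missing. I therefore expect that a general proof will need to either proceed conditionally on the $p$-distinguished involution theory, or fall back on explicit minimization of $\FT_W$ rank by rank, as is carried out for $C_2$ in \S\ref{sec:halftwistcomplexintro} and the supplementary document \cite{C2supplement}.
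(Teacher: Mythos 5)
The statement you are addressing is a conjecture, and the paper does not prove it in general; it verifies it only in type $C_2$ with $p=2$ (Theorem \ref{thm:C2char2diag}). Your first stage --- applying Conjecture \ref{conj:htcat} twice, and using that $\IC^p_{<\la}$ is a monoidal ideal, to obtain a $p$-analogue of \eqref{eq:FTintro} --- is correct and matches the mechanism the paper itself references (\cite[Lemma 3.26 and Proposition 3.31]{EHDiag2}).

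Your second stage contains a genuine misconception. You propose to produce $\al_\la$ by locating a summand $\one(2\pxb(\la))$ inside the degree-$2\pcb(\la)$ chain object of the minimal complex of $\FT_W$, and you attribute this picture to Remark \ref{rmk:indegree2c}. But that remark says the opposite: the cell-$\la$ summands of the full twist in homological degree $2\cb(\la)$ are the objects $B_d(\xb(\la)+\cb(\la))$ for $d$ a distinguished involution in $\la$, not copies of $\one$. This is visible directly in \eqref{FTchar2}, where $\one$ appears as a chain object only in degree $8$; except for the cell $\{\id\}$, there is no $\one$-summand to include, so your construction does not even parse. What the paper actually does (see \eqref{eigenmaps} and the Dream in \S\ref{C2eigen}) is take $\al_\la$ to be the sum, over $p$-distinguished involutions $d \in \la$, of the Frobenius unit maps $u_d \co \one \to \pB_d$. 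These generate one-dimensional Hom spaces in the relevant degree but are not split inclusions of summands. The eigenmap property is then deduced from the Frobenius unit axiom: the Gaussian-elimination projection from $\pB_d \ot \pB_w$ onto its negative-most summand $\pB_w$ absorbs $u_d \ot \id$. Your proposal misses this mechanism entirely; recasting stage two around Frobenius algebra objects, rather than summand inclusions of $\one$, is the essential step.
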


\begin{rmk} \label{rmk:indegree2credux} The properties of the half and full twist discussed in Remark \ref{rmk:indegree2c} should
have analogues in characteristic $p$ as well. That is, the $p$-distinguished involutions from Conjecture \ref{conj:distinv} should govern which chain objects appear in certain
homological degrees within the half and full twists. \end{rmk}

To give some justification for this conjecture, we prove our main theorem.

\begin{thm} \label{thm:C2char2diag} Conjecture \ref{conj:ftdiag} holds in type $C_2$ in characteristic $2$. \end{thm}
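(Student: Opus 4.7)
The plan is to establish Conjecture \ref{conj:ftdiag} in type $C_2$ at $p = 2$ by bootstrapping from the preceding half-twist theorem and then constructing the four required eigenmaps explicitly using diagrammatics.

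First I would deduce the characteristic-$2$ analogue of \eqref{eq:FTintro}: for each $w$ in a $p$-cell $\la$, the minimal complex of $\FT_W \otimes \pB_w$ has top-degree term $\pB_w(2\pxb(\la))[2\pcb(\la)]$, with all other summands lying in $\IC^p_{<\la}$ and in strictly lower homological degree. This follows by an argument mirroring \cite[Lemma 3.26, Proposition 3.31]{EHDiag2}: tensor $\HT_W$ with the minimal complex of $\HT_W \otimes \pB_w$ provided by the preceding theorem, use that $\pSchu_L$ is an involution to identify the iterated top-degree term as $\pB_w$ with doubled shifts, and perform Gaussian elimination to absorb the extra terms into $\IC^p_{<\la}$.

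Second, I would construct for each of the four $p$-cells $\la \in \{\la_0, \la_{\pbug}, \la_s, \la_1\}$ a chain map $\al_\la \colon \one(2\pxb(\la))[2\pcb(\la)] \to \FT_W$. The minimal complex of $\FT_W$ itself can be computed by Gaussian elimination starting from $\HT_W \otimes \HT_W$; from it one reads off the graded Hom-space of chain maps from shifted copies of $\one$ into $\FT_W$ in each relevant bidegree. For the three cells $\la_0, \la_{\pbug}, \la_1$ already present in characteristic zero, integral eigenmaps exist by work in preparation for dihedral groups \cite{EHFullTwistDihedral}, and one verifies their mod-$2$ reductions retain the eigenmap property. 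The genuinely new eigenmap $\al_{\la_s}$ must be constructed directly in characteristic $2$; its existence is of the same flavor as the $2$-torsion chain map $\iota$ of Example \ref{ex:2torsion}, which has no integral lift of the required form.

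The main obstacle will be constructing and verifying the new eigenmap $\al_{\la_s}$. One must exhibit a chain map of bidegree $(0, 4)$ from $\one$ to $\FT_W$ and then check that $\al_{\la_s} \otimes \pB_s$ is an isomorphism on the top-degree summand $\pB_s(0)[4]$ of the minimal complex of $\FT_W \otimes \pB_s$, modulo $\IC^p_{<\la_s}$. This verification is a direct diagrammatic manipulation, but it requires careful tracking of the new $\mathbb{F}_2$-phenomena exhibited in Example \ref{ex:2torsion}. The other three cells are handled by analogous but easier calculations. The full case-by-case verification is mechanical but lengthy, and is relegated to the supplementary document \cite{C2supplement}; we present only the essential steps in the body of the paper.
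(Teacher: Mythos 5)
Your high-level plan (compute $\FT_W$, construct four eigenmaps, verify the eigenmap property) matches the paper's overall shape, but you miss the key structural idea that makes the verification go through cleanly, and one of your intuitions is factually backwards.

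The paper does not construct the eigenmaps by reducing characteristic-zero eigenmaps from \cite{EHFullTwistDihedral} modulo $2$. Instead it constructs all four $\al_\la$ uniformly: each is the sum of the unit maps for the graded Frobenius algebra objects $\pB_d$, where $d$ ranges over the $p$-distinguished involutions $\{1, s, t, sts, w_0\}$ in the cell $\la$. This is the crucial insight. The proof of the eigenmap property then avoids any ``mechanical but lengthy'' diagrammatic check: working modulo $\IC^p_{<\la}$, the Gaussian elimination homotopy equivalence from $\FT \ot \pB_d$ to its minimal complex is a \emph{naive} projection in the left-most surviving homological degree (this is a general fact about Gaussian elimination), that projection is precisely the Frobenius multiplication $B_d \ot B_d \to B_d$, and the unit axiom then forces $\al_\la \ot \pB_d$ to be the identity. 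Your plan to ``verify by direct diagrammatic manipulation'' without this observation would be much more painful and offers no indication of how to organize the computation. Also note that $\la_{\pbug}$ is not a $0$-cell (it is a proper sub-$p$-cell of $\la_{\bug}$), so even for the ``old'' cells the reduction argument needs care, since the eigenmap property is stated relative to different ideals.

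Your remark that $\al_{\la_s}$ is ``of the same flavor as the $2$-torsion chain map $\iota$ of Example \ref{ex:2torsion}, which has no integral lift of the required form'' is wrong on two counts and would send you down a blind alley. First, the map $\iota$ in that example \emph{does} lift over $\Z$ (the paper says so explicitly); what is special is that $2\iota$ is nulhomotopic. Second, and more importantly, the paper explicitly computes that $\HOM(\one, \FT_W)$ over $\Z$ is torsion-free, and $\al_{\la_s}$ does lift over $\Z$ as a non-torsion class: it is $\startdotred$ on the Frobenius object $B_s$ in homological degree $4$, and it is only $2$-torsion \emph{modulo the sublattice $\IC_{<\la_s}$}. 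Looking for a torsion element of $\HOM(\one, \FT_W)$ would find nothing. This is precisely the ``false intuition'' the authors say they started with and then corrected after computing; your proposal reproduces the incorrect guess.
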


If Conjecture \ref{conj:ftdiag} holds then, at least for dihedral groups and other Coxeter groups where the eigenvalues are homologically distinct (i.e. $\pcb(\la) \ne \pcb(\mu)$
for distinct $p$-cells $\la \ne \mu$), the machinery from \cite{EHDiag, EHDiag2} immediately applies to prove that $\FT_W$ is categorically diagonalizable. We do not spell
this out in type $C_2$ for reasons of brevity.

To prove Theorem \ref{thm:C2char2diag}, we compute the full twist over $\Z$, with the answer given in \eqref{FTchar2}. We explain our methods in \S\ref{C2methods}, and omit the
gory details (found in \cite{C2supplement}). Then we can explicitly construct the candidate eigenmaps in \S\ref{C2eigen}, and they have a very satisfying description: they are
built from the unit maps of certain Frobenius algebra objects. We explain the connection between Frobenius algebra objects, distinguished involutions, and eigenmaps in the
following sections. The eventual proof is found in \S\ref{frobunitandeigen}, see Corollary \ref{cor:mainthmproven}.

\begin{rmk} We remark on the difficulty of this theorem. As mentioned, the work in preparation \cite{EHFullTwistDihedral} proves the characteristic zero version of Theorem
\ref{thm:C2char2diag}, also by explicitly computing the full twist (for any dihedral group). These characteristic zero full twists have a great deal of interesting structure; while we do not showcase that structure here, we give a hint by displaying an interesting Koszul-like complex in \S\ref{subsec:koszul}.

Meanwhile, the computation in finite characteristic is an order of magnitude more difficult, and lacks many useful tools available in the characteristic zero setting. While our
computations for the half twist in type $C_2$ involved only a handful of Gaussian eliminations (6 pages of work), the computation of the full twist was enormous (60 pages of work)
and quite involved. At the moment there is no organizing structure, only a mass of computations, though we hope one day this will be rectified. \end{rmk}

So, where do the extra eigenmaps come from? The example of type $C_2$ sheds light on this question.

\begin{ex} The eigenmap for the $0$-cell $\la_{\bug}$ lifts over $\Z$ in a fairly straightforward way, and descends to an eigenmap for the $p$-cell $\la_{\pbug}$. Meanwhile, the extra
$p$-cell $\la_s$ has eigenvalue $\one(0)[4]$, a homological degree unmatched by any $0$-cell, so its eigenmap is not an adaptation of any characteristic zero eigenmap. In characteristic
zero there is a one-dimensional space of chain maps $\one(0)[4] \to \FT_W$ modulo homotopy, and the most obvious generator has a lift over $\Z$ which we denote $\phi$.

Let $\HOM$ denote the bigraded space of morphisms (of all graded and homological degrees) in the homotopy category. In type $C_2$ over $\Z$, one can compute that $\HOM(\one, \FT_W)$ is
free\footnote{The space of chain maps from a shift of $\one$ to $\FT_W$ is free over $\Z$, but it is not obvious that the same should be true of chain maps modulo homotopy. In type $A$,
Hom spaces in the homotopy category were computed in \cite{ElHog16a}, using spectral sequences which degenerated due to parity. One hopes that similar parity arguments will also apply
over $\Z$, allowing one to prove freeness and other properties of full twists.} over $\Z$. The subspace of morphisms in the ideal $\IC_{< \la_s}$ (which one can define over $\Z$) form a
sublattice. In homological degree $4$, this sublattice has index $2$. In fact, $\phi \in \IC_{< \la_s}$. The new eigenmap $\al_{\la_s}$ (in characteristic $2$) does lift over $\Z$, and $2 \al_{\la_s}$ is homotopic to $\phi$. See \S\ref{eigentorsion} for details.

In summary, the new eigenmap does not come from a torsion element in $\HOM(\one, \FT_W)$, but from an element which is torsion modulo the sublattice $\IC_{< \la_s}$. This is in contrast
to Example \ref{ex:2torsion}, where $\al_{\la_s} \ot B_s$ produces a $2$-torsion element in $\HOM(B_s, \HT_W \ot B_s)$. Another takeaway is that, when working over $\Z$, $\HOM(\one,
\FT_W)$ is bigger than the experts familiar with characteristic zero may expect. While the map $\phi$ is the obvious generator in characteristic zero, it only generates an index $2$
sublattice over $\Z$. \end{ex}

\begin{rmk} The beautiful conjectures of Gorsky-Negut-Rasmussen \cite{GNR} (now mostly addressed in works of Oblomkov-Rozansky) have placed the study of the full twist for the
symmetric group $S_n$ in a new geometric perspective. They posit that the Drinfeld center of the Hecke category is equivalent to the category of equivariant coherent sheaves on (a
particular version of) the flag Hilbert scheme of $n$ points on the plane, with the full twist corresponding to $\OC(1)$. Thus maps from $\one$ to
$\FT_W$ correspond to sections of $\OC(1)$. The eigenmaps form a special set of linear sections on this quasi-projective variety. One expects there to be a similarly
interesting quasi-projective variety appearing for other finite Coxeter groups. Our observations in this paper suggest that the coordinate rings of these varieties have an interesting
integral form whose study could shed light on the geometry of $p$-cells. \end{rmk}

\subsection{Outline and acknowledgments}

In \S\ref{sec:intro}, which perhaps you already read, we stated the main conjectures and theorems in this paper. In \S\ref{sec:C2diag} we provide background on the diagrammatic
Hecke category in type $C_2$, using the thick calculus established in \cite{ECathedral}. In \S\ref{sec:C2} we provide minimal versions of various complexes (the half twist, the
full twist, and their action on various indecomposables) in type $C_2$ in both characteristic $0$ and $2$. We provide the eigenmaps in \S\ref{C2eigen}. In \S\ref{frobalgobj}
through \S\ref{frobunitandeigen} we discuss Frobenius algebra objects associated to distinguished involutions, and how eigenmaps can optimistically built from their unit maps.
Since this does occur in type $C_2$, we can prove Theorem \ref{thm:C2char2diag} in \S\ref{frobunitandeigen}. Afterwards, we discuss the extra eigenmap in \S\ref{eigentorsion}, and
discuss our methods of computation in \S\ref{C2methods}. The bulk of the computations can be found explicitly in the supplemental document \cite{C2supplement}.

In \S\ref{sec:C2} we assume the reader is familiar with the technique known as Gaussian elimination for complexes. Everything we use about this technique can be found in \cite[\S 5.4]{EGaitsgory}.

Because there are few references on dihedral Hecke categories, especially in finite characteristic, we have tried to make \S\ref{sec:C2diag} a useful resource. We have included
everything needed to understand the complexes of \S\ref{sec:C2}, but also a few extra topics of interest, such as the Koszul-like complex in \S\ref{subsec:koszul}.

In \S\ref{sec:C3} we give a high-level discussion of one interesting numerical situation which arises in type $C_3$, and hypothesize on the categorical explanation. Though mostly
guesswork this chapter contains some interesting food for thought. One can skip from \S\ref{sec:intro} directly to \S\ref{sec:C3}.

The appendix \S\ref{appendix}, written by Joel Gibson, and based on calculations originally done by Geordie Williamson and the second author, describes the differences between the Kazhdan-Lusztig cells and the $p$-cells for Weyl groups in rank $\le 6$. Graphs detail the places where $(\xb,\cb)$ and $(\xb^p,\cb^p)$ disagree in rank $\le 4$. For several Weyl groups, this is the first time that the $p$-cell partial order has appeared in print.

{\bf Acknowledgments} The authors would like to especially thank Joel Gibson for his assistance in verifying the main conjecture, for making the data so easy to parse, for writing
the appendix, and for keeping the authors from making incorrect conjectures (see e.g. Remark \ref{rmk:butnotx}). It was very helpful, when pursuing an idea, to know promptly
whether it was true in all known cases or not! We would also like to thank Geordie Williamson for making these computations possible, and for countless useful conversations. We wish to thank George Lusztig for his interest and helpful comments. Finally, we would like to thank the anonymous referee for many helpful suggestions.


This paper began with a visit by the second author to University of Oregon, a trip supported by NSF grant DMS-1553032. During this paper's long journey, the first author was
supported by NSF grants DMS-1553032 and DMS-1800498 and DMS-2039316. This paper was partially written while the authors were visiting the IAS, a visit supported by NSF grant
DMS-1926686.

\section{Type $C_2$ diagrammatics} \label{sec:C2diag}

In an effort to make our computations more accessible to anyone attempting a deep study, we provide here a primer on morphisms in the Hecke category in type $C_2$ (over $\Z$). The reader interested in seeing the results (the half twist and full twist complexes) but without needing to understand the differentials should skip this chapter entirely.

For background on the diagrammatic Hecke category, see \cite{EMTW} or \cite{EWGr4sb}.

\subsection{Setup and notation}

\begin{notation} Let $\Bbbk$ be a commutative domain, which we call the \emph{base ring}. In this paper, we will typically set $\Bbbk$ to be $\Z$ or a field.
 For technical reasons, we fix an element $\kappa \in \Bbbk$. \end{notation}

A \emph{realization} of a Coxeter system $(W,S)$ over $\Bbbk$ is, roughly speaking, a free $\Bbbk$-module $V$ equipped with a choice of roots in $V$ and coroots in $V^* :=
\Hom_{\Bbbk}(V,\Bbbk)$, making $V$ into a ``reflection representation'' of $W$. See \cite[\S 5.7]{EMTW} for information on realizations. The non-expert reader need not concern
themselves with the definition or properties of realizations in general. In this paper we work with a suitably universal realization $V$ of type $C_2$.

\begin{defn} Let $W$ be the Coxeter group of type $C_2$ with simple reflections $\{s,t\}$. Define $V$ to be the free $\Bbbk$-module with basis $\{\alpha_s, \alpha_t, \varpi\}$, where $\alpha_s$ and $\alpha_t$ are the \emph{simple roots}. Inside $V^*$, define the \emph{simple coroots} $\alpha_s^\vee$ and $\alpha_t^\vee$ so that
\begin{equation} \langle \alpha_s^\vee, \alpha_s \rangle = \langle \alpha_t^\vee, \alpha_t \rangle = 2, \qquad \langle
\alpha_s^\vee, \alpha_t \rangle = -2, \qquad \langle \alpha_t^\vee, \alpha_s \rangle = -1, \end{equation}
and
\begin{equation} \langle \alpha_s^\vee, \varpi \rangle = 1, \qquad \langle \alpha_t^\vee, \varpi \rangle = \kappa. \end{equation}

\begin{remark} \label{rmk:whyV} One technical property one might desire of a realization is \emph{Demazure surjectivity}, which states that the map $\alpha_s^\vee : V \to \Bbbk$
is surjective (and similarly for $\alpha_t^\vee$). This property holds for $V$ since $\alpha_s^\vee$ pairs against $\varpi$ to be $1$. This is the reason to
introduce $\varpi$; if $V$ were spanned by $\alpha_s$ and $\alpha_t$ instead, then Demazure surjectivity would fail over $\Z$ or in characteristic $2$. We need to introduce the scalar $\kappa$ to encode the value of $\langle \alpha_t^\vee, \varpi \rangle$, but $\kappa$ plays absolutely no role in the rest of this paper. Our realization $V$ can be
specialized to any other realization of $C_2$ satisfying Demazure surjectivity (if $\kappa$ is specialized appropriately). We recall the importance of Demazure surjectivity in
Remark \ref{rmk:DemSurj} below. \end{remark}

These pairings of simple coroots with simple roots conform with the standard Cartan matrix for $C_2$, with short root $\alpha_s$ and long root $\alpha_t$. Below $s$ is red and $t$ is blue.
\[
	\dynkin[edge/.style={-},text style/.style={scale=1},*/.style={red},o/.style={blue},edge-length=1cm,root-radius=0.10cm,labels={s, t}]B{*o}
\]
We sometimes refer to the elements of $\{s,t\}$, which often appear as indices in this paper, as \emph{colors}. Below, $s$ will always be red, and $t$ will always be blue.

The group $W$ acts on $V$ by $\Bbbk$-linear transformations, via the formula
\begin{equation} s(v) = v - \langle \alpha_s^\vee, v \rangle  \alpha_s, \qquad t(v) = v - \langle \alpha_t^\vee, v \rangle \alpha_t. \end{equation}\end{defn}


\begin{notation}
Henceforth, let
\begin{equation} \alpha = \alpha_s, \quad \beta = \alpha_t, \quad \gamma = \alpha_s + \alpha_t, \quad \delta = \alpha_t + 2 \alpha_s \end{equation}
	\[ \begin{tikzpicture}[descr/.style={fill=white}, scale = 1]
		\node (origin) at (0,0) {};
		\node (alpha) at (1,0) {$\alpha$};
		\node (beta) at (-1,1) {$\beta$};
		\node (gamma) at (0,1) {$\gamma$};
		\node (delta) at (1,1) {$\delta$};
		\path (origin.center) edge (alpha);
		\path (origin.center) edge (beta);
		\path (origin.center) edge (gamma);
		\path (origin.center) edge (delta);
	\end{tikzpicture} \]
be the four positive roots in $V$. It is worth remembering that $\gamma = t(\alpha_s)$ is $s$-invariant and $\delta = s(\alpha_t)$ is $t$-invariant. These facts will be used tacitly in all sorts of polynomial forcing relations below. We also use
\begin{equation} \chi := -s(\varpi) = \alpha - \varpi. \end{equation}
\end{notation}

\begin{notation} Let $R$ be the polynomial ring of $V$, with degrees doubled so that $\deg V = 2$. For $\xi \in R$ we write $\xi_r$ for the operator of right-multiplication by
$\xi$, and $\xi_l$ for left multiplication, acting on any given $(R,R)$-bimodule. \end{notation}

Typical examples of $(R,R)$-bimodules will be morphism spaces in the Hecke category. For example, the monoidal identity in the Hecke category is denoted $\one$, and its
endomorphism ring is $R$.

\begin{notation} The operator $\alpha_s^\vee$ extends by a twisted Leibniz rule to a map $\pa_s \co R \to R$, and similarly for $\pa_t$, see \cite[\S 4.3, Exercise 5.46]{EMTW} for
details. In this paper, $\pa_s$ is primarily applied to elements of $V \subset R$, where it agrees with $\alpha_s^\vee$. \end{notation}

\subsection{The Hecke category}

We assume the reader is familiar with diagrammatics for monoidal categories, see \cite[\S 7]{EMTW} for an introduction. We give here an ad hoc definition of the diagrammatic Hecke
category, tailored for type $C_2$. In type $C_2$, the original construction was given in \cite{ECathedral}. For a full definition in general, see \cite[\S 10.2]{EMTW}. We do not cite original sources; more detailed background and history of these results can be found in the textbook \cite{EMTW}.

\begin{defn} Let $R$ denote the polynomial ring of $V$ over $\Bbbk$. The \emph{(diagrammatic) Bott-Samelson category} $\HC_{\BS} = \HC_{\BS}(W,S,\Bbbk,V)$ is the (strict) monoidal
category defined as follows. The objects are monoidally generated by objects called $B_s$ and $B_t$, whose identity maps are drawn as red and blue lines, respectively. Objects are
therefore sequences in the set $\{s,t\}$, which we sometimes call \emph{Bott-Samelson objects}. As usual for diagrammatic categories, the morphism space between any two
Bott-Samelson objects is the $\Bbbk$-span of diagrams built from the generators in \eqref{subeq:generators}, modulo certain relations which we discuss below. Diagrams have a
degree and relations are homogeneous, making these morphism spaces into graded $\Bbbk$-modules. \end{defn}

The generating morphisms in $\HC_{\BS}$ are trivalent vertices and univalent vertices (called \emph{dots}) of each color $s$ and $t$, $8$-valent vertices, and polynomials.
\begin{subequations} \label{subeq:generators}
\begin{equation} \startdotred, \quad \startdotblue, \quad \finaldotred, \quad \finaldotblue, \quad \splitred, \quad \splitblue, \quad \mergered, \quad \mergeblue, \end{equation}
\begin{equation} \ig{.5}{8valentversion1}, \quad \ig{.5}{8valentversion2}, \quad \bigpoly{f} \text{ for } f \in R. \end{equation}
\end{subequations} 

Because the generating diagrams include polynomials in $R$ acting on the monoidal identity $\one$, morphism spaces are naturally graded $(R,R)$-bimodules, and pre- or
post-composition by any morphism is a bimodule map.

\begin{defn} \label{defn:heckecategory} The \emph{(diagrammatic) Hecke category} $\HC = \HC(W,S,\Bbbk,V)$ is the additive graded Karoubi envelope of $\HC_{\BS}(W,S,\Bbbk,V)$. The objects are formal direct sums of grading shifts of direct summands (i.e. formal images of idempotents) of Bott-Samelson objects, and the morphisms are induced from those in $\HC_{\BS}$. \end{defn}

\begin{notation} We write $\HC^{\Z}$ for the Hecke category when $\Bbbk = \Z$, and $\HC^0$ or $\HC^p$ for the Hecke category when $\Bbbk$ is a field of the corresponding characteristic. \end{notation}

The relations can be found in \cite[\S 10.2.2]{EMTW}, and we discuss them here only at a high level. One family of relations are the \emph{isotopy relations}, stating that isotopic diagrams with the
same boundary represent the same morphism. A second family of relations are the \emph{Frobenius relations}, stating that $B_s$ is a Frobenius algebra object in $\HC_{\BS}$. For example, the trivalent vertex satisfies an associativity relation
\begin{equation} \label{eq:associativity} \ig{1}{assocleft} = \ig{1}{assocright}, \end{equation}
and the univalent and trivalent vertices satisfy a unit relation
\begin{equation} \label{eq:unitaxiom} \ig{1}{unitleft} = \ig{1}{linered}. \end{equation}
A third family of relations involve the $8$-valent vertex. In this paper we will entirely avoid the use of the $8$-valent vertex and its relations, by way of the thick calculus developed in \cite{ECathedral}, which we recall in the next section. A final relation is the \emph{polynomial forcing relation}
\begin{equation} \label{eq:polyforce} \bigpoly{f} \ig{1}{linered} = \ig{1}{linered} \bigpoly{s(f)} + \ig{1}{brokenred} \bigpoly{\pa_s(f)} \quad. \end{equation}

Suppose $f \in R$. Using \eqref{eq:polyforce}, one can show that $f_l - f_r$ acts by zero on the identity map of $B_s$ when $f$ is $s$-invariant. In particular, it acts by zero on any morphism factoring through the object $B_s$ (since pre- and post-composition are $(R,R)$-bimodule maps). If $f$ is $W$-invariant, then $f_l - f_r$ acts by zero on the identity map of any Bott-Samelson object, and therefore on any morphism in $\HC$. We record these statements for future reference.

\begin{equation} \label{sinvariantkillsBs}\text{For } f \in R^s, f_l - f_r \text{ acts by zero on any morphism factoring through } B_s. \end{equation}

\begin{equation} \label{Winvariantkillsall} \text{For } f \in R^W, f_l - f_r \text{ acts by zero on any morphism in } \HC. \end{equation}

Now we recall the key properties of $\HC$, all of which are discussed at length in \cite[\S 10 and \S 11]{EMTW}. Over any base ring $\Bbbk$, morphism spaces are free as left
$R$-modules and as right $R$-modules. The size of morphism spaces is controlled by the Soergel Hom Formula, see \cite[Theorem 5.27]{EMTW}, and an explicit \emph{double leaves}
basis for morphisms is defined in \cite[\S 10]{EMTW}. When $\Bbbk$ is a field, the category $\HC$ is Karoubian and satisfies the Krull-Schmidt property, so its Grothendieck group
$[\HC]$ has basis given by the symbols of indecomposable objects. One can prove that the $\Z[v,v^{-1}]$-algebra map sending $b_s \mapsto [B_s]$ and $v \mapsto [\one(1)]$ is an
isomorphism from the Hecke algebra $\HB(W)$ to $[\HC]$, for any field $\Bbbk$.

When $\Bbbk$ is a field, the indecomposable objects in $\HC$ are parametrized, up to isomorphism and grading shift, by the elements of $W$. More precisely, the Bott-Samelson object
associated to a reduced expression of $w \in W$ will have a unique direct summand which can not be found as a summand of any shorter expression, and this summand is denoted ${}^p
B_w$. When the characteristic is understood, this is shortened to $\pB_w$, and in characteristic zero we may also use $B_w$. The object $\pB_w$ is independent of the choice of reduced expression, up
to (non-canonical) isomorphism. However, the behavior of this indecomposable summand is very dependent on the choice of $\Bbbk$! For example, in characteristic $2$, $B_s B_t B_s$
is indecomposable, so this Bott-Samelson object is $\pB_{sts}$. In any other characteristic, $B_s$ is a direct summand of $B_s B_t B_s$, and $\pB_{sts}$ is the complementary
summand.

\begin{defn} \label{defn:pcanbasis} Let $\Bbbk$ be a field of characteristic $p$. The \emph{$p$-canonical basis} $\{\pb_w\}$ is the basis of $\HB(W)$ corresponding to $\{[\pB_w]\}$
under the aforementioned isomorphism $\HB(w) \to [\HC]$. \end{defn}

It was proven in \cite{JenWilpCan} that this basis does not depend on the precise choice of field, only on the characteristic of that field. Many other basic properties of the $p$-canonical basis can be found in \cite{JenWilpCan}.

%

When we work over $\Bbbk = \Z$, the Krull-Schmidt property will not hold. It still makes sense to discuss indecomposable objects in the Hecke category $\HC^{\Z}$, but their
endomorphism rings will not be local. Statements about the classification of indecomposable objects in $\HC^{\Z}$ and the uniqueness of direct sum decompositions are not in the
literature to our knowledge. Regardless, type $C_2$ is sufficiently small that indecomposable objects can be constructed by brute force. For each indecomposable object which is
not already a Bott-Samelson object, we will explicitly write down the idempotent (inside the endomorphism ring of some Bott-Samelson object) which that indecomposable is the image
of. We will also explicitly write down inclusion and projection maps for all direct sum decompositions we use.

\begin{notation} Recall that over $\Z$ the Bott-Samelson object $B_s B_t B_s$ is indecomposable, and this stays true in characteristic $2$, so that $B_s B_t B_s = \pB_{sts}$ for
$p=2$. For clarity, we will never use the notation $\pB_{sts}$ below, always refering to this object as $B_s B_t B_s$. For any base ring where $2$ is invertible, $B_s B_t B_s \cong
B_{sts} \oplus B_s$ where $B_{sts}$ is the image of the idempotent $\frac{e_{sts}}{2}$ defined in the next section. We only use the notation $B_{sts}$ when $2$ is invertible.
\end{notation}

Finally, we note the relation between the diagrammatic category $\HC$ and the category of Soergel bimodules. There is a monoidal functor $\FC$ sending $\HC$ to the
category of graded $(R,R)$-bimodules, for which $\FC(B_s) = R \ot_{R^s} R(1)$. The grading shift places $1 \ot 1$ in degree $-1$. The indecomposable object $\pB_{w_0}$ has the
same size as $B_{w_0}$ in any characteristic (i.e. $\pb_{w_0} = b_{w_0}$), and is sent by $\FC$ to the bimodule $R \ot_{R^W} R(4)$. For the realization we are using, $\FC$ is fully faithful, and induces an $(R,R)$-bimodule isomorphism on morphism spaces. The essential image of $\FC$ is known as the category of Soergel bimodules.

\begin{rmk} In type $C_2$ the functor $\FC$ is fully faithful, so diagrams can be viewed as a computational tool to study Soergel bimodules. In affine type in finite
characteristic, it is extremely important to distinguish between the diagrammatic category $\HC$ and the category of Soergel bimodules. This is because the reflection
representation of an infinite Coxeter group can not be faithful in finite characteristic, leading to additional morphisms between Soergel bimodules which do not exist in $\HC$. In
other words, the functor $\FC$ will no longer be full. \end{rmk}

\subsection{Basics of thick calculus}

Because $\pa_t(\al_s) = -1$ is invertible, $B_t$ is a direct summand of $B_t B_s B_t$. The complementary summand $B_{tst}$ is the image of the idempotent $e_{tst}$.
\begin{equation} \label{eq:3} e_{tst} = \ethree := \longblue \longred \longblue + \doublebluepitch \end{equation}
By design $e_{tst}$ satisfies \emph{death by pitchfork}:
\begin{equation} \label{3pitch} {
\labellist
\tiny\hair 2pt
 \pinlabel {$3$} [ ] at 20 20
\endlabellist
\centering
\ig{.5}{threewpitch}
} = 0. \end{equation}
We also have the \emph{leapfrog move}
\begin{equation} \label{3leapfrog} {
\labellist
\tiny\hair 2pt
 \pinlabel {$3$} [ ] at 20 20
\endlabellist
\centering
\ig{.5}{threetotR}
} = {
\labellist
\tiny\hair 2pt
 \pinlabel {$3$} [ ] at 20 20
\endlabellist
\centering
\ig{.5}{threetotL}
}. \end{equation}
The easiest way to prove \eqref{3leapfrog} is to combine death by pitchfork with the \emph{three-way dot force}
\begin{equation} \label{movingdot} \ig{.5}{8oclockbreak} = \ig{.5}{4oclockbreak} + \ig{.5}{12oclockbreak} - \mergeblue \barbblue. \end{equation}

Meanwhile, $B_s B_t B_s$ is equipped with a quasi-idempotent $e_{sts}$
\begin{equation} e_{sts} := 2 \longred \longblue \longred + \doubleredpitch. \end{equation}
This endomorphism satisfies death by pitchfork, and consequently $e_{sts}^2 = 2 e_{sts}$. After inverting the number $2$ we can define the true idempotent $\frac{e_{sts}}{2}$, which projects to a summand named $B_{sts}$. We denote this idempotent by
\begin{equation} {
\labellist
\tiny\hair 2pt
 \pinlabel {$3'$} [ ] at 19 23
\endlabellist
\centering
\ig{.5}{otherthree}
} := \frac{e_{sts}}{2}. \end{equation}

The \emph{Jones-Wenzl morphism} is the following degree $+2$ rotation-invariant sum of diagrams on the planar disk:
\begin{equation} \label{JW} {
\labellist
\small\hair 2pt
 \pinlabel {$\JW$} [ ] at 22 24
\endlabellist
\centering
\ig{1}{JWempty}
} := \ig{1}{JWst} + \ig{1}{JWts} + \ig{1}{JWid} + \ig{1}{JWs} + 2 \ig{1}{JWt}. \end{equation}
It satisfies death by pitchfork from all angles, and is uniquely specified by this property up to scalar. Note the coefficient $2$ on the final diagram.

The Jones-Wenzl morphism gives rise to an idempotent inside both $B_s B_t B_s B_t$ and $B_t B_s B_t B_s$,
\begin{equation} \label{JWidemp} {
\labellist
\small\hair 2pt
 \pinlabel {$\JW$} [ ] at 35 30
\endlabellist
\centering
\ig{.7}{idempstst}
} \qquad {
\labellist
\small\hair 2pt
 \pinlabel {$\JW$} [ ] at 35 30
\endlabellist
\centering
\ig{.7}{idemptsts}
}, \end{equation} and the images of these two idempotents are isomorphic, see Remark \ref{8valent}. We introduce an abstract object $B_{w_0}$ isomorphic to these images, which we denote with a thicker purple strand, following the thick calculus of \cite[\S 6.3]{ECathedral}. When interacting with polynomials, $B_{w_0}$ behaves like its Soergel bimodule counterpart $R \ot_{R^W} R(4)$.  There are inclusion and projection maps, denoted as vertices which ``split'' the purple strand into red and blue strands.
\begin{equation} \ig{.5}{wtostst}, \qquad \ig{.5}{ststtow}, \qquad \ig{.5}{wtotsts}, \qquad \ig{.5}{tststow}. \end{equation}
Then, tautologically, we have the following relations and their color swap:
\begin{equation} \label{splitmerge} \ig{.5}{splitmerge} = \ig{.5}{longpurple}, \qquad \ig{.5}{mergesplit} = {
\labellist
\small\hair 2pt
 \pinlabel {$\JW$} [ ] at 35 30
\endlabellist
\centering
\ig{.5}{idemptsts}
}. \end{equation}

\begin{rmk} \label{8valent} The composition $B_s B_t B_s B_t \to B_{w_0} \to B_t B_s B_t B_s$ is the $8$-valent vertex in the ordinary Soergel calculus. The $8$-valent vertex induces an isomorphism between the images of the two Jones-Wenzl idempotents.
\begin{equation} \label{8valentcomposition} \ig{.7}{8valentascomposition} = \ig{.7}{8valentversion1}. \end{equation}
Because of \eqref{8valentcomposition}, we can entirely avoid using $8$-valent vertices, and for the rest of this paper we only use diagrams with splitting vertices instead. \end{rmk}

Using \eqref{splitmerge}, we deduce that the splitters satisfy death by pitchfork. As a consequence, they absorb $e_{tst}$:
\begin{equation} \label{absorb} {
\labellist
\tiny\hair 2pt
 \pinlabel {$3$} [ ] at 32 8
\endlabellist
\centering
\ig{1}{ststtowwthree}
} = \ig{1}{ststtow}. \end{equation}
By the uniqueness of the Jones-Wenzl morphism, it is straightforward to deduce
\begin{equation} \label{doton8valent} \ig{.7}{8valenttwodot} = {
\labellist
\small\hair 2pt
 \pinlabel {$\JW$} [ ] at 28 28
\endlabellist
\centering
\ig{.7}{JWonstrip}
}, \qquad \ig{.7}{8valentonedot} = {
\labellist
\small\hair 2pt
 \pinlabel {$\JW$} [ ] at 28 28
\endlabellist
\centering
\ig{.7}{JWonstripwtri}
}. \end{equation}

Two other relations allow one to move dots around on splitters. These relations also hold upside-down, or with the colors swapped. The \emph{leapfrog move} can be proven similarly to \eqref{3leapfrog} above.
\begin{equation} \label{leapfrog} \ig{.5}{wtotsDOTDOT} = \ig{.5}{wtotDOTDOTs} = \ig{.5}{wtoDOTDOTts}. \end{equation}
The \emph{dot migration} move can be proven with \eqref{splitmerge} and \eqref{doton8valent}.
\begin{equation} \label{dotmigrate} \ig{.5}{wtostsDOT} = \ig{.5}{wtoDOTsts}. \end{equation}

In particular, there is a one-dimensional space of degree $+1$ morphisms from $B_{w_0}$ to $B_s B_t B_s$ (or to $B_t B_s B_t$), spanned by the morphism in \eqref{dotmigrate}. We often shorten this morphism to a \emph{partial splitter} as follows:
\begin{equation} \label{partialsplitterdef} \wtosts := \ig{.5}{wtoDOTsts} = \ig{.5}{wtostsDOT}. \end{equation}
While the splitter has degree $0$, this partial splitter has degree $+1$. Note that these partial splitters satisfy death by pitchfork, and thus the partial splitter to $B_t B_s B_t$ absorbs $e_{tst}$.

Similarly, there is a one-dimensional space of degree $4 - \ell(u)$ morphisms from $B_{w_0}$ to $B_u$ for any $u \in W$, allowing us to define other partial splitters:
\begin{equation} \label{morepartialsplitters} \wtots := \ig{.5}{wtotsDOTDOT}, \qquad \wtot := \ig{.5}{wtotDOTDOTDOT}. \end{equation}
All partial splitters are just the splitter composed with the appropriate number of dots, whose placement is irrelevant. The partial splitters in \eqref{morepartialsplitters} have degree $2$ and $3$ respectively. Our conventions for partial splitters are invariant under flipping diagrams horizontally or vertically, and under color swap. The ``partial splitter'' when $u = 1$ is usually just denoted as a purple dot, c.f. \cite[(6.20)]{ECathedral}:
\begin{equation} \label{purple dot} \wtoid. \end{equation}

By placing dots on the various relations above, one can compute a number of useful relations. For clarity we place all polynomials on the right, and use the subscript $r$ to indicate right multiplication.
\begin{subequations} \label{useful}
\begin{equation} \ig{.5}{stswsts} = \linered\lineblue\linered \beta_r + \doubleredpitch \delta_r + \finaldotred\finaldotblue\redpitchforkup + \startdotred\startdotblue\redpitchforkdown + 2 \linered \lineblue \brokenred - 2 \ig{.5}{pitchcapcup}, \end{equation}
\begin{equation} {
\labellist
\tiny\hair 2pt
 \pinlabel {$3$} [ ] at 20 88
 \pinlabel {$3$} [ ] at 21 18
\endlabellist
\centering
\ig{.5}{threewthree}
} = \ethree \alpha_r + \threetsthree. \end{equation}
\begin{equation} {
\labellist
\tiny\hair 2pt
 \pinlabel {$3$} [ ] at 20 18
\endlabellist
\centering
\ig{.5}{threewsts}
} = \threetotstosts + \threetosttosts + \threetostosts. \end{equation}
\end{subequations}

\subsection{Direct sum decompositions}

Whenever we use the direct sum decomposition
\begin{subequations} \label{BtBtip}
\begin{equation} B_t B_t \cong B_t(1) \oplus B_t(-1), \end{equation}
we use the following inclusion and projection maps. The subscript indicates which summand is the source or target, i.e. $p_{-1}$ is a map to $B_t(-1)$ so it has degree $-1$, while $i_{-1}$ is a map from $B_t(-1)$ so it has degree $+1$.
\begin{equation} p_{-1} = \ig{.7}{mergeblue}, \qquad i_{-1} = {
\labellist
\tiny\hair 2pt
 \pinlabel {$-\alpha$} [ ] at 15 24
\endlabellist
\centering
\ig{.7}{splitblue}
}. \end{equation}
\begin{equation} p_{+1} = {
\labellist
\tiny\hair 2pt
 \pinlabel {$\gamma$} [ ] at 15 8
\endlabellist
\centering
\ig{.7}{mergeblue}
}, \qquad i_{+1} = \ig{.7}{splitblue}. \end{equation}
\end{subequations}
Crucial to this direct sum decomposition is the fact that $\pa_t(-\alpha) = 1$, and $t(\alpha) = \gamma$.

Whenever we use the direct sum decomposition
\begin{subequations} \label{BsBsip}
\begin{equation} B_s B_s \cong B_s(1) \oplus B_s(-1), \end{equation}
we use the following inclusion and projection maps.
\begin{equation} p_{-1} = \ig{.7}{mergered}, \qquad i_{-1} = {
\labellist
\tiny\hair 2pt
 \pinlabel {$\varpi$} [ ] at 15 24
\endlabellist
\centering
\ig{.7}{splitred}
}. \end{equation}
\begin{equation} p_{+1} = {
\labellist
\tiny\hair 2pt
 \pinlabel {$\chi$} [ ] at 15 8
\endlabellist
\centering
\ig{.7}{mergered}
}, \qquad i_{+1} = \ig{.7}{splitred}. \end{equation}
\end{subequations}
Crucial to this direct sum decomposition is the fact that $\pa_s(\varpi) = 1$ and $-s(\varpi) = \chi$.

\begin{remark} \label{rmk:DemSurj} The reason to include $\varpi$ in the realization $V$, and more generally the reason for Demazure surjectivity, is to ensure a decomposition as in \eqref{BsBsip}. \end{remark}

Whenever we use the direct sum decomposition
\begin{subequations} \label{wsip}
\begin{equation} B_{w_0} B_s \cong B_{w_0}(1) \oplus B_{w_0}(-1), \end{equation}
we use projection and inclusion maps similar to \eqref{BsBsip}, except that every appearance of $\splitred$ is replaced with
\begin{equation} \label{thicktri} \ig{.7}{thicktrisplitred}, \end{equation}
and similarly upside-down. The same polynomials appear in the same places in $p_{+1}$ and $i_{-1}$. The \emph{thick trivalent vertex} \eqref{thicktri} is defined as in \cite[(6.18)]{ECathedral}. It may help to note that
\begin{equation} \ig{.7}{thicktriwithdot} = \ig{.7}{wtotsts}, \end{equation}
a variant on the unit axiom (it follows easily from \eqref{splitmerge} and one-color relations).
\end{subequations}

Similarly, for the direct sum decomposition
\begin{subequations} \label{wtip}
\begin{equation} B_{w_0} B_t \cong B_{w_0}(1) \oplus B_{w_0}(-1), \end{equation}
we use projection maps similar to \eqref{BtBtip} except with a blue thick trivalent vertex replacing $\splitblue$. We do the same for
\begin{equation} B_{tst} B_t \cong B_{tst}(1) \oplus B_{tst}(-1), \end{equation}
except using a different ``trivalent'' vertex
\begin{equation} {
\labellist
\tiny\hair 2pt
 \pinlabel {$3$} [ ] at 20 25
\endlabellist
\centering
\ig{.7}{threetribelow}
} = {
\labellist
\tiny\hair 2pt
 \pinlabel {$3$} [ ] at 20 25
\endlabellist
\centering
\ig{.7}{threetriabove}
}. \end{equation}
\end{subequations}

One has a direct sum decomposition
\begin{subequations} \label{BtstBsip}
\begin{equation} B_{tst} B_s \cong B_{w_0} \oplus B_{ts}. \end{equation}
We use inclusion and projection maps
\begin{equation} p_{w_0} = {
\labellist
\tiny\hair 2pt
 \pinlabel {$3$} [ ] at 21 9
\endlabellist
\centering
\ig{.7}{threestow}
}, \qquad i_{w_0} = {
\labellist
\tiny\hair 2pt
 \pinlabel {$3$} [ ] at 21 30
\endlabellist
\centering
\ig{.7}{wtothrees}
}, \end{equation}
\begin{equation} p_{ts} = {
\labellist
\tiny\hair 2pt
 \pinlabel {$3$} [ ] at 21 9
\endlabellist
\centering
\ig{.7}{threestots}
}, \qquad i_{ts} = -{
\labellist
\tiny\hair 2pt
 \pinlabel {$3$} [ ] at 21 30
\endlabellist
\centering
\ig{.7}{tstothrees}
}. \end{equation}
It is not hard to verify that
\begin{equation} p_{ts} \circ i_{ts} = -(\pa_s(\beta) + 1) \id_{ts} = \id_{ts}, \end{equation}
\begin{equation} p_{w_0} \circ i_{w_0} = \id_{w_0}, \end{equation}
\begin{equation} \label{fooblah} i_{w_0} \circ p_{w_0} + i_{ts} \circ p_{ts} = \id_{tst} \ot \id_{s}. \end{equation}
For \eqref{fooblah}, when resolving $i_{w_0} p_{w_0}$ with \eqref{splitmerge} and \eqref{JW}, death by pitchfork eliminates all but two terms: the identity, and a term cancelling $i_{ts} p_{ts}$.
\end{subequations}
Note that this direct sum decomposition works over $\Z$.

Finally we consider the decomposition of the Bott-Samelson object $(B_s B_t B_s) B_t$. There is a two-dimensional space of degree zero morphisms to $B_{st}$, spanned by
\begin{subequations} \label{CstsBtip}
\begin{equation} \linered \bluepitchforkdown, \qquad \redpitchforkdown \lineblue \end{equation}
The two-dimensional space of morphisms from $B_{st}$ is spanned by the vertical flip of these maps. Composing these to get a $2 \times 2$ matrix of endomorphisms of $B_{st}$ (the so-called \emph{local intersection form}), the resulting morphisms are all multiples of the identity, and the coefficients are
\begin{equation} \left( \begin{array}{cc} -1 & 1 \\ 1 & -2 \end{array} \right). \end{equation}
Since this matrix has determinant $1$, $B_{st}$ appears as a summand inside $B_s B_t B_s B_t$ with multiplicity $2$ (even over $\Z$). Specifically, we chould choose projection and inclusion maps as follows.
\begin{equation} p_1 = \linered \bluepitchforkdown + \redpitchforkdown \lineblue, \qquad i_1 = -\redpitchforkup \lineblue , \end{equation}
\begin{equation} p_2 = 2 \linered \bluepitchforkdown + \redpitchforkdown \lineblue, \qquad i_2 = -\linered \bluepitchforkup. \end{equation}
It is easy to verify that $\id_{B_s B_t B_s B_t} - i_1 p_1 - i_2 p_2$ is the Jones-Wenzl idempotent from \eqref{JWidemp}. Thus we deduce that
\begin{equation} B_s B_t B_s B_t \cong B_{w_0} \oplus B_{st} \oplus B_{st} \end{equation}
where the final projection and inclusion maps are given by splitters
\begin{equation} p_{w_0} = \ig{.5}{wtostst}, \qquad i_{w_0} = \ig{.5}{ststtow}. \end{equation}
\end{subequations}

\subsection{Endomorphisms of $B_{w_0}$}

As an $R$-bimodule, $\End(B_{w_0}) \cong R \ot_{R^W} R$. This isomorphism equips $\End(B_{w_0})$ with a presentation, where it is generated by left and right multiplication by
polynomials, and the relations come from the tensor product. Note that $\End(B_{w_0})$ is a commutative ring.

We will return to left and right multiplication by polynomials in \S\ref{subsec:koszul}. First, we explore a different description of $\End(B_{w_0})$.

As a right $R$-module, $\End(B_{w_0})$ is free of graded rank $1 + 2v^2 + 2v^4 + 2v^6 + v^8$. It has a basis $\{\phi_u\}$ for $u \in W$, where $\phi_u$ is defined as a composition
of partial splitters $B_{w_0} \to B_u \to B_{w_0}$, and has degree $8 - 2 \ell(u)$. The identity map is $\phi_{w_0}$.

\begin{rmk} All statements about the graded rank of morphism spaces follow from Soergel's Hom formula, and its analogue for the diagrammatic Hecke category (see \cite[Theorem 11.1(5)]{EMTW}), which still applies even when working over $\Z$. \end{rmk}


Let
\begin{equation} x := \phi_{tst} = \ig{.5}{xdef}, \qquad y := \phi_{sts} = \ig{.5}{ydef}. \end{equation}
Then $\End(B_{w_0})$ is generated over the right action of $R$ by $x$ and $y$. The action of $x$ and $y$ on the right $R$-basis $\{\phi_u\}$ can be computed fairly easily using \eqref{useful}. We encode it in the following graph, where a blue (resp. red) arrow indicates the action of $x$ (resp. $y$).
\begin{equation} \label{endograph}
\begin{tikzpicture}
\node (w0) at (0,8) {$\phi_{w_0}$};
\node (tst) at (-2,6) {$\phi_{tst}$};
\node (sts) at (2,6) {$\phi_{sts}$};
\node (ts) at (-2,4) {$\phi_{ts}$};
\node (st) at (2,4) {$\phi_{st}$};
\node (t) at (-2,2) {$\phi_t$};
\node (s) at (2,2) {$\phi_s$};
\node (1) at (0,0) {$\phi_1$};
\path[blue, ->]
	(w0) edge (tst)
	(tst.255) edge (ts.105)
	(sts.255) edge (st.105)
	(tst) edge [loop left] node {$\alpha$} (tst)
	(sts) edge (ts)
	(st) edge [loop left] node {$\alpha$} (st)
	(ts) edge [loop left] node {$\gamma$} (ts)
	(ts.255) edge (t.105)
	(st.255) edge (s.105)
	(st) edge (t)
	(s) edge [loop left] node {$\gamma$} (s)
	(t) edge [loop left] node {$\delta$} (t)
	(s) edge (1)
	(1) edge [loop left] node {$\delta$} (1);
\path[red, ->]
	(w0) edge (sts)
	(tst.285) edge (ts.75)
	(sts.285) edge node[right] {$2$} (st.75)
	(sts) edge [loop right] node {$\beta$} (sts)
	(tst) edge (st)
	(st) edge [loop right] node {$\delta$} (st)
	(ts) edge [loop right] node {$\beta$} (ts)
	(ts.285) edge node[right] {$2$} (t.75)
	(st.285) edge (s.75)
	(ts) edge (s)
	(s) edge [loop right] node {$2 \gamma$} (s)
	(t) edge [loop right] node {$\delta$} (t)
	(t) edge (1)
	(1) edge [loop right] node {$2 \gamma$} (1);
\end{tikzpicture} \end{equation}
For example, $\phi_{ts} x = \phi_t + \gamma_r \phi_{ts}$ and $\phi_{ts} y = \phi_s + 2 \phi_t + \beta_r \phi_{ts}$. Remember that $\xi_r$ denotes right multiplication by the polynomial $\xi \in R$.
As we will see in the next section, this graph is also useful for studying morphisms from $B_{w_0}$ to other objects.

\subsection{Morphisms involving $B_{w_0}$}

Now let $z \in W$ be any element except for $sts$; one can also allow $z = sts$ if $2$ is inverted. The indecomposable object $B_z$ categorifies the Kazhdan-Lusztig basis element, and this basis element is smooth, meaning that all of its Kazhdan-Lusztig polynomials are trivial. As a consequence of the Soergel Hom Formula, for any $u \le z$ there is a one-dimensional space of
morphisms from $B_z$ to $B_u$ in degree $\ell(z) - \ell(u)$, which can be obtained by including $B_z$ into a Bott-Samelson, applying the appropriate number of dots (whose placement
is irrelevant), and then projecting to $B_u$. This map is a generalization of the partial splitters above. We might call it a \emph{canonical map}.

\begin{rmk} When there is a functor from diagrammatics to bimodules, the inclusion and projection maps of top summands are normalized so that they preserve certain vectors known as 1-tensors. The 1-tensors are also preserved by $\finaldotred$, so they are preserved by canonical maps. \end{rmk}

For any $z \in W$ and $u \le z$ we can define $\phi_u^z \co B_{w_0} \to B_z$, defined as the composition $B_{w_0} \to B_u \to B_z$ of canonical maps. It has degree $4 + \ell(z) - 2
\ell(u)$. Then $\{ \phi_u^z\}_{u \le z}$ is a right $R$-basis for $\Hom(B_{w_0},B_z)$. There is an action of $\End(B_{w_0})$ on
$\Hom(B_{w_0}, B_z)$ by precomposition, which one might encode with a graph as in \eqref{endograph}. In fact, this graph embeds inside \eqref{endograph}, sending $\phi_u^z \mapsto
\phi_u$ (the proof by direct computation is straightforward). For example, \[ \phi^{tst}_{ts} \circ x = \phi^{tst}_t + \gamma_r \phi^{tst}_{ts}.\]

Similarly, we can define $\barphi_u^z \co B_z \to B_{w_0}$, as the composition $B_z \to B_u \to B_{w_0}$. In other words, $\barphi_u^z$ is the vertical flip of the diagram for
$\phi_u^z$. The action of $\End(B_{w_0})$ by postcomposition on $\Hom(B_z, B_{w_0})$ obeys exactly the same rules as \eqref{endograph} again, since the generators $x$ and $y$ of
$\End(B_{w_0})$ are invariant under the vertical flip.


It remains to examine $\Hom(B_{w_0}, B_s B_t B_s)$. By flipping everything upside-down, we obtain similar statements for $\Hom(B_s B_t B_s, B_{w_0})$. A basis for morphisms is
given by the following eight elements, sorted by degree.
\begin{subequations}
\begin{equation} \phi_{sts}^{sts} := \wtosts, \quad \phi_{st}^{sts} := \wtost \startdotred, \quad \phi_{ts}^{sts} := \startdotred \wtots, \quad \rho_s^{sts} = \ig{.5}{wtopitchred}. \end{equation}
\begin{equation} \phi_s^{sts}:= \wtos \startdotblue \startdotred, \quad \phi_t^{sts} = \startdotred \wtot \startdotred, \quad \rho_1^{sts} = \wtoid \pitchcupred, \quad \phi_1^{sts} = \wtoid \startdotred \startdotblue\startdotred. \end{equation}
\end{subequations}
The action of $x$ and $y$ on this basis is given as follows. The action on $\rho^{sts}_s$ and $\rho^{sts}_1$ mimic the action on $\phi_s$ and $\phi_1$ from \eqref{endograph}. In the quotient by the submodule spanned by $\rho^{sts}_s$ and $\rho^{sts}_1$, the action on $\phi_u^{sts}$ embeds inside \eqref{endograph}. Here is the full graph.
\begin{equation} \label{endograph2}
\begin{tikzpicture}
\node (sts) at (2,6) {$\phi^{sts}_{sts}$};
\node (ts) at (-2,4) {$\phi^{sts}_{ts}$};
\node (st) at (2,4) {$\phi^{sts}_{st}$};
\node (t) at (-2,2) {$\phi^{sts}_t$};
\node (s) at (2,2) {$\phi^{sts}_s$};
\node (1) at (0,0) {$\phi^{sts}_1$};
\node (rhos) at (-6,4) {$\rho^{sts}_s$};
\node (rho1) at (-6,2) {$\rho^{sts}_1$};
\path[blue, ->]
	(sts.255) edge (st.105)
	(sts) edge (ts)
	(st) edge [loop left] node {$\alpha$} (st)
	(ts) edge [loop left] node {$\gamma$} (ts)
	(ts.255) edge (t.105)
	(st.255) edge (s.105)
	(st) edge (t)
	(s) edge [loop left] node {$\gamma$} (s)
	(t) edge [loop left] node {$\delta$} (t)
	(s) edge (1)
	(1) edge [loop left] node {$\delta$} (1)
	(sts.180) edge (rhos.90)
	(rhos) edge (rho1)
	(rhos) edge [loop left] node {$\gamma$} (rhos)
	(rho1) edge [loop left] node {$\delta$} (rho1);
\path[red, ->]
	(sts.285) edge node[right] {$2$} (st.75)
	(sts) edge [loop right] node {$\beta$} (sts)
	(st) edge [loop right] node {$\delta$} (st)
	(ts) edge [loop right] node {$\beta$} (ts)
	(ts.285) edge node[right] {$2$} (t.75)
	(st.285) edge (s.75)
	(ts) edge (s)
	(s) edge [loop right] node {$2 \gamma$} (s)
	(t) edge [loop right] node {$\delta$} (t)
	(t) edge (1)
	(1) edge [loop right] node {$2 \gamma$} (1)
	(rhos) edge [loop right] node {$2 \gamma$} (rhos)
	(rho1) edge [loop right] node {$2 \gamma$} (rho1)
	(sts.195) edge (rhos.30)
	(ts) edge (rho1)
	(ts) edge[bend left] node[above] {$-\alpha$} (rhos);
\end{tikzpicture} \end{equation}



\subsection{Left multiplication versus right multiplication} \label{subsec:koszul}

We return to the study of $\End(B_{w_0})$. We have the following formulas which compute left multiplication by polynomials in terms of this right $R$-basis.
\begin{subequations} \label{leftmult}
\begin{equation} \alpha_l = 2x - y - \alpha_r. \end{equation}
\begin{equation} \beta_l = 2y - 2x - \beta_r. \end{equation}
\begin{equation} \gamma_l = y - \gamma_r. \end{equation}
\begin{equation} \delta_l = 2x - \delta_r. \end{equation}
\end{subequations}
In particular, there are nice formulas for the sum $f_l + f_r$ for various roots $f$.
\begin{subequations} \label{sums}
\begin{equation} \alpha_l + \alpha_r = 2x - y. \end{equation}
\begin{equation} \beta_l  + \beta_r = 2y - 2x. \end{equation}
\begin{equation} \gamma_l + \gamma_r = y. \end{equation}
\begin{equation} \delta_l + \delta_r = 2x. \end{equation}
\end{subequations}
Some sums are divisible by $2$, so that when we write $\frac{\beta_l + \beta_r}{2}$ we refer to the morphism $y-x$, a genuine morphism over $\Z$. One also obtains formulas for differences $f_l - f_r$, which appear frequently in differentials in Rouquier complexes. For example
\begin{equation} \frac{\delta_l - \delta_r}{2} = x - \delta_r, \qquad \gamma_l - \gamma_r = y - 2 \gamma_r. \end{equation}

The difference $\gamma_l - \gamma_r$ is an endomorphism of $B_{w_0}$ which will annihilate any morphism $B_{w_0} \to B_s$ under precomposition, or any morphism $B_s
\to B_{w_0}$ under postcomposition. This is because $\gamma$ is $s$-invariant, so $\gamma_l - \gamma_r$ acts by zero on the identity map of $B_s$ by \eqref{eq:polyforce}. Since pre-composition is an $(R,R)$-bimodule map, $\gamma_l - \gamma_r$ kills any morphism out of $B_s$. It should not be surprising then that $\gamma_l - \gamma_r = y - 2 \gamma_r$ kills $\phi_s$, as is evident from \eqref{endograph}.
Similarly, $\delta_l - \delta_r$ and $x - \delta_r$ annihilate any morphism between $B_{w_0}$ and $B_t$. For any polynomial $f$, $f_l - f_r$ annihilates any morphism between $B_{w_0}$ and $R$.

Let $q = 2 \alpha^2 + 2 \alpha \beta + \beta^2$. This is the \emph{quadratic form} induced by the Cartan matrix, and it is $W$-invariant. Therefore $q_l - q_r = 0$ acting on any morphism space in the Hecke category (since the bimodule action of $R$ factors through $R \ot_{R^W} R$).

Recall that $\gamma = \beta + \alpha$ and $\delta = \beta + 2\alpha$. This implies that
\begin{equation} \label{eq:qis} q = \alpha \delta + \beta \gamma. \end{equation}
Now consider the action of the following operator on an $(R,R)$-bimodule.
\begin{equation} (\delta_l - \delta_r)(\alpha_l + \alpha_r) + (\gamma_l - \gamma_r)(\beta_l + \beta_r) = (\alpha_l \delta_l + \gamma_l \beta_l) - (\alpha_r \delta_r + \gamma_r \beta_r) + (\delta_l \alpha_r - \delta_r \alpha_l + \gamma_l \beta_r - \gamma_r \beta_l). \end{equation}
The first term involves all left actions, and equals $q_l$ by \eqref{eq:qis}. The second term equals $-q_r$. The third term mixes the left and right action. However, expanding $\gamma$ and $\delta$ into $\alpha$ and $\beta$, the third term equals zero:
\begin{equation} \delta_l \alpha_r - \delta_r \alpha_l + \gamma_l \beta_r - \gamma_r \beta_l = 2 \alpha_l \alpha_r + \beta_l \alpha_r - 2 \alpha_r \alpha_l - \beta_r \alpha_l + \beta_l \beta_r + \alpha_l \beta_r - \beta_r \beta_l - \alpha_r \beta_l = 0. \end{equation}
Thus we conclude that
\begin{equation} (\delta_l - \delta_r)(\alpha_l + \alpha_r) + (\gamma_l - \gamma_r)(\beta_l + \beta_r) = q_l - q_r, \end{equation}
which acts by zero on any morphism space in the Hecke category.

This permits us to build the 2-periodic ``Koszul complex''
\begin{equation} \label{Koszul} \ldots \to B_{w_0}^{\oplus 2} \to B_{w_0}^{\oplus 2} \to B_{w_0} \to R \to 0. \end{equation}
The final differential in this complex is $\wtoid$. The penultimate differential is
\[ K_0 = \left[ \begin{array}{cc} \frac{\delta_l - \delta_r}{2} & \gamma_l - \gamma_r \end{array} \right], \]
which annihilates any map to $R$. The differential before that is
\[ K_1 = \left[ \begin{array}{cc} \gamma_l - \gamma_r & \alpha_l + \alpha_r \\ - \frac{\delta_l - \delta_r}{2} & \frac{\beta_l + \beta_r}{2} \end{array} \right] \]
and the differential before that is
\[ K_2 = \left[ \begin{array}{cc}  \frac{\beta_l + \beta_r}{2} & -(\alpha_l + \alpha_r) \\  \frac{\delta_l - \delta_r}{2} & \gamma_l - \gamma_r \end{array} \right]. \]
After that the differentials alternate between $K_1$ and $K_2$, as $K_1 \circ K_2 = K_2 \circ K_1 = 0$.
We don't use this Koszul complex in this paper, but pieces of it appear in the full twist and its eigenmaps, and the computation that $K^2 = 0$ is a useful stepping stone to the more complicated computations below.

Note also that
\begin{equation} (\alpha_l - \alpha_r)(\delta_l + \delta_r) + (\beta_l - \beta_r)(\gamma_l + \gamma_r) = q_l - q_r = 0. \end{equation}
The corresponding Koszul complex is isomorphic to the one above. To see this, think of each instance of $B_{w_0}^{\oplus 2}$ as being $B_{w_0} \boxtimes V$ (or more precisely, since $V$ is three-dimensional, one should tensor with the two-dimensional span of the roots inside $V$). The multiplicity space $V$ admits a rotation which sends $\alpha \mapsto \gamma$ and $\beta \mapsto -\delta$ and also sends $\gamma \mapsto - \alpha$ and $\delta \mapsto \beta$. Applying this change of basis to each $B_{w_0}^{\oplus 2}$ yields the desired isomorphism up to sign.

\section{Type $C_2$ results} \label{sec:C2}

We describe the half twist in \S\ref{C2half}, the action of the half twist on indecomposables in \S\ref{C2halfacts}, and the full twist in \S\ref{C2full}, all in type $C_2$. We provide complexes defined over $\Z$ which descend to the minimal complex in characteristic $2$, and also the minimal complexes obtained after inverting $2$. Also, in \S\ref{C2eigen} we provide and discuss the eigenmaps for the full twist.

The lengthy computations which justify these complexes are performed in the supplemental document \cite{C2supplement}, but we explain our methods in \S\ref{C2methods}. The supplement \cite{C2supplement} also contains minimal complexes for other Rouquier complexes.

An underline indicates the term in homological degree zero.

\subsection{The half twist}\label{C2half}

Over any field of characteristic $\ne 2$ one has
\begin{subequations} \label{HTcharboth}
\begin{equation} \label{HTchar0} \HT \cong \left(
\begin{tikzpicture}
\node (w0) at (0,0) {$\un{B}_{w_0}(0)$};
\node (sts) at (2.5,.5) {$B_{sts}(1)$};
\node at (2.5,0) {$\oplus$};
\node (tst) at (2.5,-.5) {$B_{tst}(1)$};
\node (st) at (5,.5) {$B_{st}(2)$};
\node at (5,0) {$\oplus$};
\node (ts) at (5,-.5) {$B_{ts}(2)$};
\node (s) at (7.5,.5) {$B_{s}(3)$};
\node at (7.5,0) {$\oplus$};
\node (t) at (7.5,-.5) {$B_{t}(3)$};
\node (id) at (10,0) {$\one(4)$};
\path
	(w0) edge node[above] {$\wtootherthree$} (sts)
	(w0) edge node[below] {$-\wtothree$} (tst)
	(sts) edge node[above] {$d_1$} (st)
	(sts) edge (ts)
	(tst) edge (st)
	(tst) edge (ts)
	(st) edge node[above] {$d_2$} (s)
	(st) edge (t)
	(ts) edge (s)
	(ts) edge (t)
	(s) edge node[above] {$\finaldotred$} (id)
	(t) edge node[below] {$\finaldotblue$} (id);
\end{tikzpicture} \right). \end{equation}
One has
\begin{equation} \tilde{d}_1 = \left[ \begin{array}{cc} \linered \lineblue \finaldotred & \finaldotblue \linered \lineblue \\ \; & \; \\ \finaldotred \lineblue \linered & \lineblue \linered \finaldotblue \end{array} \right], \quad d_1 = \tilde{d}_1 \circ \left[ \begin{array}{c} {
\labellist
\tiny\hair 2pt
 \pinlabel {$3'$} [ ] at 19 23
\endlabellist
\centering
\ig{.5}{otherthree}
} \\ \; \\ \ethree \end{array} \right], \qquad d_2 = \left[ \begin{array}{cc} -\linered \finaldotblue & \finaldotblue \linered \\ \; & \; \\ \finaldotred \lineblue & - \lineblue \finaldotred \end{array} \right]. \end{equation}

Let us witness the phenomenon discussed in Remark \ref{rmk:indegree2c}. We have $\cb(\la_{\bug}) = 1$, and objects in cell $\la_{\bug}$ only appear in $\HT$ in degrees $\ge 1$. The distinguished involutions in this cell are $s$ and $t$, and in degree exactly $1$, we find $sts = \Schu(s)$ and $tst = \Schu(t)$. Similar statements hold for $\la_0$ and $\la_1$.

In contrast, over $\Z$ or in characteristic 2, one has
\begin{equation} \label{HTchar2} \HT \cong \left(
\begin{tikzpicture}
\node (w0) at (0,0) {$\un{B}_{w_0}(0)$};
\node (sts) at (2.5,.5) {$B_s B_t B_s(1)$};
\node at (2.5,0) {$\oplus$};
\node (tst) at (2.5,-.5) {$B_{tst}(1)$};
\node (snew) at (5,2.5) {$B_s(1)$};
\node at (5,1.5) {$\oplus$};
\node (st) at (5,.5) {$B_{st}(2)$};
\node at (5,0) {$\oplus$};
\node (ts) at (5,-.5) {$B_{ts}(2)$};
\node (s) at (7.5,.5) {$B_{s}(3)$};
\node at (7.5,0) {$\oplus$};
\node (t) at (7.5,-.5) {$B_{t}(3)$};
\node (id) at (10,0) {$\one(4)$};
\path
	(w0) edge node[above] {$\wtosts$} (sts)
	(w0) edge node[below] {$-\wtothree$} (tst)
	(sts) edge node[above] {$d_1'$} (st)
	(sts) edge (ts)
	(tst) edge (st)
	(tst) edge (ts)
	(sts) edge node[above] {$\redpitchforkdown$} (snew)
	(st) edge node[above] {$d_2$} (s)
	(st) edge (t)
	(ts) edge (s)
	(ts) edge (t)
	(snew) edge node[above] {$\trianglered$} (s)
	(s) edge node[above] {$\finaldotred$} (id)
	(t) edge node[below] {$\finaldotblue$} (id);
\end{tikzpicture} \right), \end{equation}
and no further Gaussian elimination can be applied. Note that the differential has zero component from $B_{tst}(1)$ to $B_s(1)$ or from $B_s(1)$ to $B_t(3)$. Here
\begin{equation} d_1' = \tilde{d}_1 \circ \left[ \begin{array}{c} \linered \lineblue \linered \\ \; \\ \ethree \end{array} \right]. \end{equation}
\end{subequations}
Also, the degree $+2$ \emph{triangle map} generates the kernel of a dot on top or on bottom:
\begin{equation} \label{triangledef} \trianglered := - \brokenred + \linered \barbred. \end{equation}
To verify that $d^2 = 0$ from $B_s B_t B_s(1)$ to $B_s(3)$, apply the three-way dot force \eqref{movingdot}.

Now consider the same phenomenon (see Remark \ref{rmk:indegree2credux}) for $p=2$. We have $\cb^p(\la_{\pbug}) = 1$, and objects in cell $\la_{\pbug}$ first appear in homological degree $1$. The distinguished involutions in this $p$-cell are $sts$ and $t$, and in homological degree $1$ we witness $sts = \pSchu(sts)$ and $tst = \pSchu(t)$. Similarly, $\cb^p(\la_s) = 2$, $\pSchu(s) = s$, and $B_s$ first appears in homological degree $2$.

After inverting $2$, the component of the differential $\redpitchforkdown$ in \eqref{HTchar2} becomes the projection map to a direct summand, and it is easily verified that
Gaussian elimination on \eqref{HTchar2} yields \eqref{HTchar0}.

\subsection{Action of the half twist} \label{C2halfacts}

Over $\Z$ or any field, one has
\begin{subequations}
\begin{equation} \HT \ot B_t \cong \left(
\begin{tikzpicture}
\node (a) at (0,0) {$\un{B}_{w_0}(-1)$};
\node (b) at (2.5,0) {$B_{tst}(0)$};
\node at (1,-1) {};
\path
	(a) edge node[above] {$\wtothree$} (b);
\end{tikzpicture} \right). \end{equation}

\begin{equation} \HT \ot B_{st} \cong \left(
\begin{tikzpicture}
\node (a) at (0,0) {$\un{B}_{w_0}(-2)$};
\node (b) at (2.5,0) {$B_{st}(0)$};
\node at (1,-.5) {};
\path
	(a) edge node[above] {$\wtost$} (b);
\end{tikzpicture} \right). \end{equation}

\begin{equation} \HT \ot B_{ts} \cong \left(
\begin{tikzpicture}
\node (a) at (0,0) {$\un{B}_{w_0}(-2)$};
\node (b) at (2.5,0) {$B_{ts}(0)$};
\node at (1,-.5) {};
\path
	(a) edge node[above] {$\wtots$} (b);
\end{tikzpicture} \right). \end{equation}

\begin{equation} \HT \ot B_{tst} \cong \left(
\begin{tikzpicture}
\node (a) at (0,0) {$\un{B}_{w_0}(-3)$};
\node (b) at (2.5,0) {$B_{t}(0)$};
\node at (1,-.5) {};
\path
	(a) edge node[above] {$\wtot$} (b);
\end{tikzpicture} \right). \end{equation}
\end{subequations}

Over any field of characteristic $\ne 2$ one has
\begin{subequations}
\begin{equation} \label{HTBschar0} \HT \ot B_s \cong \left(
\begin{tikzpicture}
\node (a) at (0,0) {$\un{B}_{w_0}(-1)$};
\node (b) at (2.5,0) {$B_{sts}(0)$};
\node at (1,1) {};
\node at (1,-1) {};
\path
	(a) edge node[above] {$\wtootherthree$} (b);
\end{tikzpicture} \right). \end{equation}
In contrast, over $\Z$, one has
\begin{equation} \label{HTBschar2} \HT \ot B_s \cong \left(
\begin{tikzpicture}
\node (a) at (0,0) {$\un{B}_{w_0}(-1)$};
\node (b) at (2.5,0) {$B_s B_t B_s(0)$};
\node (c) at (5,0) {$B_s(0)$};
\node at (2,-.5) {};
\path
	(a) edge node[above] {$\wtosts$} (b)
	(b) edge node[above] {$\redpitchforkdown$} (c);
\end{tikzpicture} \right). \end{equation}
\end{subequations}

Over any field of characteristic $\ne 2$ one has
\begin{subequations}
\begin{equation} \label{HTBsts} \HT \ot B_{sts} \cong \left(
\begin{tikzpicture}
\node (a) at (0,0) {$\un{B}_{w_0}(-3)$};
\node (b) at (2.5,0) {$B_{s}(0)$};
\node at (1,-.5) {};
\path
	(a) edge node[above] {$\wtos$} (b);
\end{tikzpicture} \right). \end{equation}
Over $\Z$ one has
\begin{equation} \label{HTBsBtBschar2} \HT \ot B_s B_t B_s \cong \left(
\begin{tikzpicture}
\node (a) at (0,0) {$\un{B}_{w_0} \un{B}_s(-2)$};
\node (b) at (3,0) {$B_s B_t B_s(0)$};
\node at (1,-.5) {};
\path
	(a) edge node[above] {$\wtost \linered$} (b);
\end{tikzpicture} \right) \cong \left(
\begin{tikzpicture}
\node (z) at (0,0) {$\oplus$};
\node (a) at (0,-.5) {$\un{B}_{w_0}(-1)$};
\node (b) at (3.5,0) {$B_s B_t B_s(0)$};
\node (c) at (0,.5) {$\un{B}_{w_0}(-3)$};
\path
	(a) edge node[below] {$\ig{1.1}{wtosts}$} (b)
	(c) edge node[above] {${
\labellist
\small\hair 2pt
 \pinlabel {$\chi$} [ ] at 25 24
\endlabellist
\centering
\ig{1.1}{wtosts}
}
$} (b);
\end{tikzpicture} \right). \end{equation}
\end{subequations}
Here $\chi = \alpha - \varpi$. The final isomorphism uses the decomposition given in \eqref{wsip}.

After base change to a field of characteristic $\ne 2$, $B_s B_t B_s \cong B_{sts} \oplus B_s$, and \eqref{HTBsBtBschar2} splits as a direct sum of \eqref{HTBsts} and \eqref{HTBschar0}. Over $\Z$ or in characteristic $2$, the complex \eqref{HTBsBtBschar2} is indecomposable.

Finally, over $\Z$ or any field one has
\begin{equation} \HT \ot B_{w_0} \cong \un{B}_{w_0}(-4). \end{equation}

\subsection{The full twist} \label{C2full}

After inverting $2$, the full twist has the following minimal complex.
\begin{equation} \label{FTchar0} \FT \cong \left(
\begin{tikzpicture}
\node (w00) at (0,0) {$\un{B}_{w_0}(-4)$};
\node (w01a) at (2.5,.5) {$B_{w_0}(-2)$};
\node (w01b) at (2.5,-.5) {$B_{w_0}(-2)$};
\node (deg1) at (2.5,0) {$\oplus$};
\node (sts2) at (5,1.5) {$B_s(1)$};
\node at (5,1) {$\oplus$};
\node (t2) at (5,.5) {$B_t(1)$};
\node (w02a) at (5,-.5) {$B_{w_0}(0)$};
\node at (5,-1) {$\oplus$};
\node (w02b) at (5,-1.5) {$B_{w_0}(0)$};
\node (deg2) at (5,0) {$\oplus$};
\node (st3) at (7.5,1.5) {$B_{st}(2)$};
\node at (7.5,1) {$\oplus$};
\node (ts3) at (7.5,0.5) {$B_{ts}(2)$};
\node (deg3) at (7.5,0) {$\oplus$};
\node (w03a) at (7.5,-.5) {$B_{w_0}(2)$};
\node at (7.5,-1) {$\oplus$};
\node (w03b) at (7.5,-1.5) {$B_{w_0}(2)$};
\node (sts4) at (10,1) {$B_{sts}(3)$};
\node (tst4) at (10,0) {$B_{tst}(3)$};
\node at (10,-.5) {$\oplus$};
\node (w04) at (10,-1) {$B_{w_0}(4)$};
\node (deg4) at (10,.5) {$\oplus$};

\node (sts5) at (2.5,-3.5) {$B_{sts}(5)$};
\node (deg5) at (2.5,-4) {$\oplus$};
\node (tst5) at (2.5,-4.5) {$B_{tst}(5)$};
\node (st6) at (5,-3.5) {$B_{st}(6)$};
\node (deg6) at (5,-4) {$\oplus$};
\node (ts6) at (5,-4.5) {$B_{ts}(6)$};
\node (s7) at (7.5,-3.5) {$B_s(7)$};
\node (t7) at (7.5,-4.5) {$B_t(7)$};
\node (deg7) at (7.5,-4) {$\oplus$};
\node (id) at (10,-4) {$\one(8)$};
\path
	(w00) edge node[above] {$d_0$} (deg1)
	(deg1) edge node[above] {$d_1$} (deg2)
	(deg2) edge node[above] {$d_2$} (deg3)
	(deg3) edge node[above] {$d_3$} (tst4)
	(tst4) edge[out=355, in=175, /tikz/overlay] node[below] {$d_4$} (deg5)
	(deg5) edge node[above] {$d_5$} (deg6)
	(deg6) edge node[above] {$d_6$} (deg7)
	(deg7) edge node[above] {$d_7$} (id);
\end{tikzpicture}
\qquad
\right). \end{equation}

We have
\begin{subequations}
\renewcommand{\arraystretch}{3}
\begin{equation} d_0 = \left[ \begin{array}{c} y - 2 \gamma \\ \hline 2 \delta - 2x \end{array} \right], \qquad d_1 = \left[ \begin{array}{c|c}
\wtos & 0 \\
\hline
0 & \wtot \\
\hline
y - \delta & y - x - \gamma \\
\hline
2x - y - \delta & x - \gamma
\end{array} \right],
\end{equation}
\begin{equation} d_2 = \left[ \begin{array}{c|c|c|c}
- \linered \startdotblue & \startdotred \lineblue &  \wtost & 0 \\
\hline \startdotblue \linered & - \lineblue \startdotred & 0 & \wtots \\
\hline 0 & 0 & \beta - y & y - 2x - \beta \\
\hline 0 & 0 & y - x + \alpha & x - \alpha
\end{array} \right], \end{equation}
\begin{equation} d_3 = \left[ \begin{array}{c|c|c|c}
2 \; \sttoother & 2 \; \tstoother & \wtootherthree & 0 \\
\hline \sttothree & \tstothree & 0 & -\wtothree \\
\hline 2 \sttow & 2 \tstow & x - \alpha & \beta - y
\end{array} \right], \end{equation}
\begin{equation} d_4 = \left[ \begin{array}{c|c|c}
\othertriother - \othertsother & -2 \threestother & \wtootherthree \\
\hline \othertsthree & \threestthree - \threetrithree & - \wtothree
 \end{array} \right], \end{equation}
\begin{equation} d_5 = \left[ \begin{array}{c|c}
\othertost & \threetost \\
\hline \othertots & \threetots
 \end{array} \right], \qquad d_6 = \left[ \begin{array}{c|c}
- \lineblue \finaldotblue & \finaldotblue\linered \\
\hline \finaldotred \lineblue & - \lineblue \finaldotred
\end{array} \right], \qquad d_7 = \left[ \begin{array}{c|c} \finaldotred & \finaldotblue \end{array} \right], \end{equation}
\end{subequations}

The full twist in type $C_2$ over $\Z$ has the following minimal complex.

\begin{equation} \label{FTchar2} \FT \cong \left(
\begin{tikzpicture}
\node (w00) at (0,0) {$\un{B}_{w_0}(-4)$};
\node (w01a) at (2.5,.5) {$B_{w_0}(-2)$};
\node (w01b) at (2.5,-.5) {$B_{w_0}(-2)$};
\node (deg1) at (2.5,0) {$\oplus$};
\node (sts2) at (5,1.5) {$B_s B_t B_s(1)$};
\node at (5,1) {$\oplus$};
\node (t2) at (5,.5) {$B_t(1)$};
\node (w02a) at (5,-.5) {$B_{w_0}(0)$};
\node at (5,-1) {$\oplus$};
\node (w02b) at (5,-1.5) {$B_{w_0}(0)$};
\node (deg2) at (5,0) {$\oplus$};
\node (sts3) at (7.5,2) {$B_s B_t B_s(1)$};
\node at (7.5,1.5) {$\oplus$};
\node (st3) at (7.5,1) {$B_{st}(2)$};
\node at (7.5,.5) {$\oplus$};
\node (ts3) at (7.5,0) {$B_{ts}(2)$};
\node at (7.5,-.5) {$\oplus$};
\node (w03a) at (7.5,-1) {$B_{w_0}(2)$};
\node at (7.5,-1.5) {$\oplus$};
\node (w03b) at (7.5,-2) {$B_{w_0}(2)$};
\node (s4) at (10,1.5) {$B_s(1)$};
\node at (10,1) {$\oplus$};
\node (sts4) at (10,.5) {$B_sB_t B_s(3)$};
\node (tst4) at (10,-.5) {$B_{tst}(3)$};
\node at (10,-1) {$\oplus$};
\node (w04) at (10,-1.5) {$B_{w_0}(4)$};
\node (deg4) at (10,0) {$\oplus$};

\node (s5) at (2.5,-3) {$B_s(3)$};
\node at (2.5,-3.5) {$\oplus$};
\node (sts5) at (2.5,-4) {$B_sB_tB_s(5)$};
\node at (2.5,-4.5) {$\oplus$};
\node (tst5) at (2.5,-5) {$B_{tst}(5)$};
\node (s6) at (5,-3) {$B_s(5)$};
\node at (5,-3.5) {$\oplus$};
\node (st6) at (5,-4) {$B_{st}(6)$};
\node at (5,-4.5) {$\oplus$};
\node (ts6) at (5,-5) {$B_{ts}(6)$};
\node (s7) at (7.5,-3.5) {$B_s(7)$};
\node (t7) at (7.5,-4.5) {$B_t(7)$};
\node (deg7) at (7.5,-4) {$\oplus$};
\node (id) at (10,-4) {$\one(8)$};
\path
	(w00) edge node[above] {$d_0$} (deg1)
	(deg1) edge node[above] {$d_1$} (deg2)
	(deg2) edge node[above] {$d_2$} (ts3)
	(ts3) edge node[above] {$d_3$} (deg4)
	(deg4) edge[out=355, in=175, /tikz/overlay] node[below] {$d_4$} (sts5)
	(sts5) edge node[above] {$d_5$} (st6)
	(st6) edge node[above] {$d_6$} (deg7)
	(deg7) edge node[above] {$d_7$} (id);
\end{tikzpicture}
\right). \end{equation}

We have
\begin{subequations}
\renewcommand{\arraystretch}{3}
\begin{equation} d_0 = \left[ \begin{array}{c} y - 2 \gamma \\ \hline x - \delta \end{array} \right], \qquad d_1 = \left[ \begin{array}{c|c}
- \wtostswtri - \ig{.5}{wtopitchred} & \wtostswtri - \wtost \startdotred \\
\hline
0 & \wtot \\
\hline
y - 2x + \delta & 2(x - \gamma) \\
\hline
x - y & y - 2x
\end{array} \right],
\end{equation}
\begin{equation} d_2 = \left[ \begin{array}{c|c|c|c}
2 \linered \lineblue \linered + \doubleredpitch & 0 & \wtosts & 0 \\
\hline -\linered \lineblue \finaldotred & \startdotred \lineblue & 0 & \wtost \\
\hline - \finaldotred \lineblue \linered - \startdotblue \redpitchforkdown & - \lineblue \startdotred & 0 & 0 \\
\hline 2 \ststow & 0 & y-x & \beta -y \\
\hline -\ststow & 0 & 0 & y - x + \alpha
\end{array} \right], \end{equation}
\begin{equation} d_3 = \left[ \begin{array}{c|c|c|c|c}
\redpitchforkdown & 0 & 0 & 0 & 0 \\
\hline - \linered \lineblue \linered \beta + \brokenred \lineblue \linered - \linered \lineblue \brokenred + \redpitchforkdown \startdotblue \startdotred & 2 \linered \lineblue \startdotred + \redpitchforkup \finaldotblue & 2 \startdotred \lineblue \linered + \finaldotblue \redpitchforkup & \wtosts & 0 \\
\hline 0 & \sttothree & \tstothree & 0 & -\wtothree \\
\hline \ststowwtri & 2 \sttow & 2 \tstow & x - \alpha & \beta - y
\end{array} \right], \end{equation}
\begin{equation} d_4 = \left[ \begin{array}{c|c|c|c}
\trianglered & \redpitchforkdown & 0 & 0 \\
\hline 0 & \linered\lineblue\trianglered - \brokenred\lineblue\linered - \finaldotred \finaldotblue \redpitchforkup & -2 \threetosttosts - \threetostosts & \wtosts \\
\hline 0 & \ststotstothree & \threestthree - \threetrithree & - \wtothree
 \end{array} \right], \end{equation}
\begin{equation} d_5 = \left[ \begin{array}{c|c|c}
-\brokenred & \redpitchforkdown & 0 \\
\hline 0 & \linered\lineblue\finaldotred & \threetost \\
\hline 0 & \finaldotred \lineblue\linered & \threetots
 \end{array} \right], \qquad d_6 = \left[ \begin{array}{c|c|c}
\trianglered & - \lineblue \finaldotblue & \finaldotblue\linered \\
\hline 0 & \finaldotred \lineblue & - \lineblue \finaldotred
\end{array} \right], \qquad d_7 = \left[ \begin{array}{c|c} \finaldotred & \finaldotblue \end{array} \right], \end{equation}
\end{subequations}

\subsection{Eigenmaps} \label{C2eigen}

The candidate eigenmaps over $\Z$ are described as follows. Checking that they are chain maps is an easy exercise.

\begin{subequations} \label{eigenmaps}
\begin{equation} \al_0 \co \one(-8)[0] \to \FT \qquad \qquad \left[ \idtow \right] \end{equation}
\begin{equation} \al_{\pbug} \co \one(0)[2] \to \FT \qquad \qquad \left[ \begin{array}{c} \pitchcupred \\ 0 \\ \startdotblue \\ 0 \\ 0 \end{array} \right] \end{equation}
\begin{equation} \al_{s} \co \one(0)[4] \to \FT \qquad \qquad \left[ \begin{array}{c} \startdotred \\ 0 \\ 0 \\ 0 \end{array} \right] \end{equation}
\begin{equation} \al_{1} \co \one(8)[8] \to \FT \qquad \qquad \left[ \begin{array}{c} 1 \end{array} \right]. \end{equation}
\end{subequations}

When $2$ is inverted, we instead have
\begin{equation} \al_{\bug} \co \one(0)[2] \to \FT \qquad \qquad \left[ \begin{array}{c} \startdotred \\ \startdotblue \\ 0 \\ 0 \end{array} \right]. \end{equation}

To prove that they are eigenmaps, we make a digression over several sections to discuss Frobenius algebra objects. Many of the ideas below owe their conception to conversations
between the first author and Matt Hogancamp, and are implicit in \cite{EHDiag2}.

\subsection{Frobenius algebra objects and distinguished involutions} \label{frobalgobj}

See \cite[\S 8]{EMTW} for additional background on Frobenius algebra objects in a monoidal category. The reader may wish to recall Notation \ref{notation:val}.

The object $B_s$ is a (graded) Frobenius algebra object of degree $1$: it has a unit $\startdotred$ of degree $1$, a multiplication map $\mergered$ of degree $-1$, and a counit
and comultiplication map given by flipping these diagrams upside-down. The relations among diagrams which involve only one color are effectively just the relations stating that
$B_s$ is a Frobenius algebra object. The Frobenius algebra structure is related to the decomposition of $B_s \ot B_s$. For example, $\startdotred \linered$ is the inclusion map of
a summand $B_s(-1)$ inside $B_s B_s$, with projection map $\mergered$; they compose to the identity by the unit axiom \eqref{eq:unitaxiom}.

Similarly, $B_{w_0}$ is also a Frobenius algebra object of degree $\ell(w_0) = 4$, since $R^{W} \subset R$ is a Frobenius extension. The unit is $\idtow$. The
multiplication map is the projection $B_{w_0} \ot B_{w_0} \to B_{w_0}(-4)$ to the unique negative-most degree summand. See \cite[\S 6.3]{ECathedral} for more details on the
multiplication map.

\begin{prop} \label{prop:onlydistcanbefrob} In $\HC^0$, if the indecomposable object $B_w$ is a Frobenius algebra object of degree $a$, then $w$ is a distinguished involution, and $a = \ab(w)$. Similarly, in $\HC^p$, if the indecomposable object $\pB_w$ is a Frobenius algebra object of degree $a$, then $w$ is a $p$-distinguished involution in the sense of Conjecture \ref{conj:distinv}, and $-\val(\mu^w_{w,w}) \ge a \ge \val(h^1_w)$. \end{prop}

We prove Proposition \ref{prop:onlydistcanbefrob} later in this section. These two examples in type $C_2$ are completely illustrative of the general principles behind this
proposition.

\begin{ex} The indecomposable object $B_{tst}$ is not a graded Frobenius algebra object of any degree. This can be seen directly from the graded dimensions of Hom spaces. Any map
$\one \to B_{tst}$ has degree at least $+3$, so the unit has such a degree. However, there are no morphisms $B_{tst} \ot B_{tst} \to B_{tst}$ of degree $\le -3$ to serve as the
multiplication map. \end{ex}

\begin{ex} The indecomposable objects $B_{st}$ is not graded Frobenius algebra object, because it is not self-biadjoint. \end{ex}

What is interesting in type $C_2$ for $p=2$ is that there is a new $p$-distinguished involution, namely $\pB_{sts} = B_s B_t B_s$ in cell $\la_{\pbug}$. The Bott-Samelson objects $B_s B_t B_s$ and $B_t B_s B_t$ are both Frobenius algebra objects of degree $1$ (in any characteristic). For $B_s B_t B_s$ the structure maps are
\begin{equation} \text{unit} = \pitchcupred, \qquad \text{multiplication} = \ig{.5}{stsmult}. \end{equation} For the same reasons as
above, $\pitchcupred \linered \lineblue \linered$ is the inclusion map of a direct summand $B_s B_t B_s(-1)$ inside $(B_s B_t B_s) \ot (B_s B_t B_s)$.

\begin{remark} We are unaware of any literature on Frobenius algebra objects possessing direct summands which are also Frobenius algebra objects. There should be some relationship
between the Frobenius algebra structures on $B_t B_s B_t$ and its direct summand $B_t$ (and similarly for $B_s B_t B_s$ and $B_s$ when $2$ is inverted), but we do not know how to
make this relationship precise. The unit map for $B_t B_s B_t$ is a composition of the unit map for $B_t$ with the inclusion map $B_t \to B_t B_s B_t$, and is therefore orthogonal
to the complementary summand $B_{tst}$. The multiplication map is not orthogonal to $B_{tst}$ (nor can it be, for the unit axiom to hold). \end{remark}

Here is the postponed proof of Proposition \ref{prop:onlydistcanbefrob}. 

\begin{proof} Let $B$ be a Frobenius algebra object. By composing the counit with the multiplication, one obtains a degree zero map $B \ot B \to \one$ which is typically called
the cap. Similarly, one can define the cup $\one \to B \ot B$. The cap and cup satisfy isotopy relations, implying that $B \ot (-)$ is self-biadjoint (the cup and cap are the unit
and counit of adjunction). The monoidal adjoint of $B_w$ is $B_{w^{-1}}$, so $B_w$ is self-biadjoint if and only if $w$ is an involution. The same statement can be made for
$\pB_w$.

Suppose $w$ is an involution and $B_w$ is a Frobenius algebra object in $\HC^0$. The minimal degree of any morphism $\one \to B_w$ is $\val(h^1_w)$, so $a \ge \val(h^1_w)$. The
minimal degree of any morphism $B_w \ot B_w \to B_w$ is $\val(\mu^w_{w,w})$, so $-a \ge \val(\mu^w_{w,w}) \ge -\ab(w)$. Thus $\ab(w) \ge a \ge \val(h^1_w)$. Since $\val(h^1_w) \ge
\ab(w)$, with equality if and only if $w$ is distinguished, we deduce that $a = \ab(w)$ and $w$ is distinguished.

The same initial argument can be made for $\pB_w$ in $\HC^p$, to deduce that $-\val(\mu^w_{w,w}) \ge a \ge \val(h^1_w)$. According to Conjecture \ref{conj:distinv}, there is a
unique involution in each left or right $p$-cell for which $-\val(\mu^w_{w,w}) \ge \val(h^1_w)$, and these are precisely the $p$-distinguished involutions, so $w$ must be
$p$-distinguished. However, this inequality can be strict, so we can not pin down the value of $a$. \end{proof}

\begin{remark} For a $p$-distinguished involution $d$ in type $C_2$, $\val(h^1_d) = -\val(\mu^d_{d,d})$, and both agree with $\ab(d)$ for the usual $\ab$-function. Thus the degree
of any Frobenius algebra object $\pB_d$ must be $\ab(d)$. \end{remark}

Let us discuss the converse. In type $C_2$ in characteristic $2$, the $p$-distinguished involutions are $\{1, s, t, sts, w_0\}$, and $\pB_d$ is a Frobenius algebra object for each
one. For an arbitrary Coxeter group, in characteristic zero, it is conjectured \cite[Conjecture 4.40]{EHDiag2} and \cite[\S 5.2]{Klein} that $B_d$ is a graded Frobenius algebra
object in $\HC^0$ whenever $d$ is a distinguished involution. A weaker statement, that the image of $B_d$ is a graded Frobenius algebra object in the associated graded of the cell
filtration, was proven in \cite[\S 4.4]{MMMTZ}. In characteristic $p$, we do not yet have enough evidence to state a conjecture confidently. Instead we state a dream, which we do
not necessarily believe is true. See Conjecture \ref{conj:distinv} for notation.

\begin{dream} For Weyl groups, in any characteristic, the indecomposable object $\pB_d$ associated to a $p$-distinguished involution $d$ is a graded Frobenius algebra object of
degree $\val(h^1_d)$. \end{dream}

\begin{remark} A weaker dream, which we expect to be true, is that $\pB_d$ is a graded Frobenius algebra object when $d$ is a $p$-distinguished involution and $\val(h^1_d) = -\val(\mu^d_{d,d})$. \end{remark}
	
\subsection{Frobenius modules, distinguished involutions, and the $J$-ring} \label{frobmodule}

Let $B$ be a graded Frobenius algebra object of degree $a$, with unit map $\eta_B \co \one(-a) \to B$. Several times in the previous section we emphasized the property that $\eta_B \ot \id_B \co
B(-a) \to B \ot B$ is the inclusion of a direct summand, with projection map given by multiplication. The unit axiom implies that the composition of projection and inclusion is
$\id_B$, as required. However, the unit map $\eta_B$ gives rise to many more direct sum decompositions, as we now explain in the example $B = B_s$.

Let $w$ be any element with $sw < w$. Then $B_s B_w \cong B_w(-1) \oplus B_w(1)$, and there is a projection map $p_w \co B_s B_w \to B_w(-1)$. An interesting example is in type
$C_2$ with $w = w_0$, where $p_w$ is a thick trivalent vertex (the 180 degree rotation of \eqref{thicktri}). More generally, one can construct $p_w$ by including $B_w$ into a
Bott-Samelson object for a reduced expression of $w$ which starts with $s$, applying the ordinary trivalent vertex $B_s B_s \to B_s(-1)$, and then projecting again from the
Bott-Samelson to $B_w$. A consequence of this description is that
\begin{equation} p_w \circ (\eta_s \ot \id_w) = \id_w. \end{equation}
When $w = s$, one just recovers the unit axiom; this is a generalization.

\begin{defn} Let $B$ be a graded Frobenius algebra object of degree $a$ in a graded monoidal category, with unit map $\eta_B$ of degree $a$. A \emph{Frobenius module} $M$ is just
a module object for $B$ (when viewed as an algebra object), where the multiplication map $p_M \co B \ot M \to M$ has degree $-a$. \end{defn}

We call $M$ a Frobenius module (rather than just a module) to emphasize some of the other structures it possesses. For example, using the self-adjunction of $B$ one can construct a comultiplication map $M \to B \ot M$, and one can verify coassociativity and the counit axiom. Note that the unit axiom states that
\begin{equation} p_M \circ (\eta_B \ot \id_M) = \id_M. \end{equation}
	
Clearly $B$ is a Frobenius module over itself. The example above indicates that $B_w$ is a Frobenius module over $B_s$ whenever $sw<w$. The unit axiom implies in general that
$M(-a)$ is a direct summand of $B \ot M$, with projection map $p_M$ and inclusion map $\eta_B \ot \id_M$.

Now we explain why one expects many Frobenius modules over a distinguished involution. One key property of distinguished involutions is that they function as the
identity element of Lusztig's $J$-ring. Let us unravel this statement for the non-expert. In characteristic zero, multiplicities in direct sum decompositions are determined by the
behavior of the KL basis in the Hecke algebra. Whenever $w, x, y$ are all in cell $\la$, the minimal integer $k$ for which $B_w(k)$ is a direct summand of $B_x \ot B_y$ satisfies
$k \ge -\ab(\la)$. Moreover, this minimal shift can be attained using distinguished involutions. If $d$ is the distinguished involution in the same right cell as $w$, then
\begin{equation} \label{identityofJring} B_w(-\ab(\la)) \sumset B_d \ot B_w, \end{equation} that is, $B_w(-\ab(\la))$ appears as a direct summand (with multiplicity 1) inside $B_d
\ot B_w$. Similarly, if $d'$ is the distinguished involution in the same left cell as $w$, then $B_w(-\ab(\la))$ appears as a direct summand (with multiplicity 1) inside $B_w \ot
B_d'$.

Now thinking with morphisms, the existence of this special direct summand \eqref{identityofJring} implies the existence of a projection map $p_w \co B_d \ot B_w \to B_w$ of degree
$-\ab(\la)$. The lack of other summands in this degree implies that the morphism space in which $p_w$ lives is one-dimensional modulo $\IC_{< \la}$. Assume that $B_w$ is a
Frobenius algebra object. The composition $p_w \circ (\eta_d \ot \id_w)$ is some scalar multiple of $\id_w$, and if this scalar is nonzero we can rescale $p_w$ to assume that the
unit axiom holds. Roughly speaking, this implies $B_w$ is a Frobenius module over $B_d$ (see the next remark). If $p_w \circ (\eta_d \ot \id_w) = 0$, we would be out of luck; we conjecture below that this never happens.

\begin{remark} The morphism space $B_d \ot B_d \ot B_w \to B_w$ in degree $-2\ab(\la)$ is also one-dimensional modulo $\IC_{< \la}$, so that associativity must hold up to scalar,
modulo $\IC_{< \la}$. If the unit axiom holds, then by composing both sides of the associativity relation with the unit $\eta_d \ot \id_d \ot \id_w$, we see that the scalar must
be $1$. So the unit axiom implies the associativity axiom modulo $\IC_{< \la}$, and $B_w$ is a Frobenius module over $B_d$ in this quotient category. In all examples we have computed, associativity holds on the nose. \end{remark}

The following is only a slight elaboration upon \cite[Conjecture 4.40]{EHDiag2}.

\begin{conj} \label{conj:iotadoesit} Let $W$ be a finite Coxeter group, and $d$ a distinguished involution in cell $\la$. Not only is $B_d$ a graded Frobenius algebra object of
degree $\ab(\la)$ in $\HC^0$, but the unit map $\eta_d$ satisfies the property that $\eta_d \ot \id_w$ is the inclusion map of a direct summand whenever $w$ is in the same right
cell as $d$. Moreover, $B_w$ is a Frobenius module over $B_d$. \end{conj}
	
\begin{dream} \label{dream:iotadoesit} Let $W$ be a Weyl group. Let $d$ be a $p$-distinguished involution (for some characteristic $p$, possibly zero) for which $\val(h^1_d) = -\val(\mu^d_{d,d}) =: a$. Not only is $\pB_d$ a graded Frobenius algebra object of degree $a$, but the unit map $\eta_d$ satisfies the property that $\eta_d \ot \id_w$
is the inclusion map of a direct summand whenever $w$ is in the same right $p$-cell as $d$. Moreover, $\pB_w$ is a Frobenius module over $\pB_d$. \end{dream}

By direct inspection, one can verify Dream \ref{dream:iotadoesit} in type $C_2$. We leave this to the reader, the only interesting remaining case being $d = sts$ and $w = st$ for
$p=2$. A statement closely related to Conjecture \ref{conj:iotadoesit} is proven in \cite[Proposition 4.38]{EHDiag2} in type $A$.

\subsection{Frobenius units and eigenmaps} \label{frobunitandeigen}

Finally, let us look again at the eigenmaps constructed in \eqref{eigenmaps}. In this discussion, $p$ is arbitrary (possibly zero). For each cell $\la$, the objects appearing in
the full twist in homological degree $2 \pcb(\la)$ and cell $\la$ are precisely the $p$-distinguished involutions with a particular degree shift, though there are other summands
in lower cells. Consequently, any chain map from $\one(2\pxb(\la))[2\pcb(\la)]$ to $\FT$ will consist of some multiple of the Frobenius unit $\eta_d$ mapping to $\pB_d$, as well
as unspecified maps to the summands in lower cells. Crucially, we can hope that the scalar on each Frobenius unit is nonzero.

\begin{dream} \label{dream:eigenmapsarealmostunits} Fix a $p$-cell $\la$, and assume for each $p$-distinguished involution therein that $\val(h^1_d) = -\val(\mu^d_{d,d}) =: a$,
and that $\pB_d$ is a Frobenius algebra object of degree $a$. Consider the homological degree $2\pcb(\la)$ inside the minimal complex of the full twist. The objects which appear
in this degree in cell $\la$ are precisely the $p$-distinguished involutions with grading shift $2\pxb(\la) + a$. Moreover, there is some chain map from
$\one(2\pxb(\la))[2\pcb(\la)]$ to $\FT$ which agrees modulo lower cells with a sum of Frobenius units multiplied by nonzero scalars. We denote this chain map $\al_{\la}$. \end{dream}

Amazingly, the maps $\al_{\la}$ constructed in \eqref{eigenmaps} above do not involve any terms going to lower cells (or scalars on the Frobenius units, though this is a matter of
normalization). Instead, it seems that $\sum \eta_d$ is actually a chain map, landing in the kernel of the differential on $\FT$! We know of no reason why the sum of
Frobenius units should be a chain map, but this happens in all cases we have computed. Again, we do not yet have enough evidence to state the below as a conjecture, though it is not necessary for any further arguments.

\begin{dream} \label{dream:eigenmapsareunits} Continuing Dream \ref{dream:eigenmapsarealmostunits}, the map from $\one(2\pxb(\la))[2\pcb(\la)]$ to $\FT$ which is the sum of the
Frobenius units is actually a chain map. \end{dream}

Now we have a useful technical result for establishing that a candidate is an eigenmap. Though we require Dream \ref{dream:iotadoesit}, we do not require the part of that Dream which states that $\pB_w$ is a Frobenius module.

\begin{prop} \label{prop:dreamsmakeeigen} Assume that Dream \ref{dream:eigenmapsarealmostunits} and Dream \ref{dream:iotadoesit} and Conjecture \ref{conj:htcat} all hold. Then $\al_{\la}$ is a $\la$-eigenmap. \end{prop}

\begin{proof} We need to show that, for any $w \in \la$, $\al_{\la} \ot \pB_w$ is an isomorphism modulo $\IC^p_{< \la}$. For this purpose we work in the quotient $\HC^p/\IC^p_{<
\la}$. In this quotient, the minimal complex of the full twist is supported in degrees $\ge \cb^p(\la)$, so we are examining the leftmost nonzero degree. 

Let ${}^i \FT$ denote the $i$-th chain object in the minimal complex of $\FT$. For the sake of our argument we wish to distinguish between several complexes:
\begin{itemize}
\item $X = \FT \ot \pB_w$, where the $i$-th chain object is described as ${}^i \FT \ot \pB_w$.
\item $Y = \FT \ot \pB_w$, where the $i$-th chain object is described as a direct sum of indecomposable objects, and
\item $Z$, the minimal complex of $\FT \ot \pB_w$. By Conjecture \ref{conj:htcat}, $Z$ is just a single copy of $\pB_w(2\pxb(\la))$ in homological degree $2 \pcb(\la)$.
\end{itemize}
Of course $X$ is isomorphic to $Y$ and homotopy equivalent to $Z$, but it helps to keep track of these isomorphisms and homotopy equivalences explicitly. The map $\rho \co X \to Y$ is a sum of projection maps for some choice of decomposition. The map $\chi \co Y \to Z$ is the map induced by Gaussian elimination of contractible summands.

\begin{ex} Consider the cell $\la_{\bug}$ in characteristic zero. Taking the quotient by lower cells, $\FT$ is supported in degrees $\ge 2$, see \eqref{FTchar0}. Let $w = st$. Then
\begin{equation} {}^2 X = B_s B_{st}(1) \oplus B_t B_{st}(1), \qquad {}^2 Y = B_{st}(0) \oplus B_{st}(2) \oplus B_{tst}(1) \oplus B_t(1), \qquad {}^2 Z = B_{st}(0). \end{equation}
 \end{ex}

The crucial fact about Gaussian elimination that we use is the following (see \cite[Exercise 5.17]{EGaitsgory} for slightly more detail on this well-known tool). A single application of Gaussian elimination might replace
\[ Y = \left( \dots \to C \oplus D \to C \oplus E \to \dots \right) \]
(where the differential induces an isomorphism $C \to C$) with
\[ Z = \left( \dots \to D \to E \to \dots \right). \]
In the homotopy equivalence $Y \to Z$, the map from $C \oplus E \to E$ is not the naive projection $\left( \begin{array}{cc} 0 & \id_E \end{array} \right)$, but involves a nontrivial map $C \to E$. However, the map $C \oplus D \to D$ is the naive projection $\left( \begin{array}{cc} 0 & \id_D \end{array} \right)$! In particular, in the leftmost nonzero degree of any complex, Gaussian elimination will induce the naive projection map. 

The chain map $\chi \circ \rho \circ (\al_{\la} \ot \id_w)$ is supported in a single homological degree, where it is the composition
\begin{equation} \pB_w(2 \pxb(\la)) \to \bigoplus_{d'} \pB_{d'} \ot \pB_w(2 \pxb(\la) + a) \to \pB_w(2 \pxb(\la)) \oplus \bigoplus \text{other terms} \to \pB_w(2 \pxb(\la)). \end{equation}
The first sum is over all distinguished involutions $d'$ in the two-sided cell $\la$. The final map is the naive projection to one summand, and that summand only appears inside $\pB_d \ot \pB_w$ when $d$ is the unique distinguished involution in the same right cell as $w$. So $\rho$ and $\chi$ compose to projection to this one summand inside $\pB_d \ot \pB_w$. Hence the entire composition $\chi \circ \rho \circ (\al_{\la} \ot \id_w)$ is equal in this homological degree to
\begin{equation} \label{foocomp} \pB_w(2 \pxb(\la)) \to \pB_d \ot \pB_w(2 \pxb(\la) + a) \to \pB_w(2 \pxb(\la)). \end{equation}
The first map is a nonzero scalar multiple of $\eta_d \ot \id_w$, the component of $\al_{\la}$ mapping to $\pB_d \ot \pB_w$. The second map is the projection $p_w$, which is uniquely-defined up to a scalar modulo $\IC^p_{< \la}$, regardless of the choice of decomposition (as noted previously, this morphism space is one-dimensional in the quotient).  By  Dream \ref{dream:iotadoesit}, the composition \eqref{foocomp} is a nonzero multiple of $\id_w$. Thus $\al_{\la} \ot \pB_w$ is an isomorphism modulo $\IC^p_{< \la}$, as desired.
\end{proof}

\begin{rmk} The assumption that Dream \ref{dream:iotadoesit} holds is motivational, but could probably be removed by the following method. One would argue that the property that
$\al_{\la} \ot \pB_w$ is an isomorphism (modulo $\IC^p_{< \la}$) is constant over right $p$-cells. Suppose that $\al_{\la} \ot \id_{B_W} \co \pB_w \to \FT \ot \pB_w$ is an
isomorphism (modulo lower terms, ignoring shifts). Now tensor on the right with $\pB_x$, to obtain an isomorphism $\pB_w \pB_x \to \FT \ot \pB_w \pB_x$. Taking direct summands of
both sides (using an idempotent which commutes with $\al_{\la} \ot \id_{\pB_w \pB_x}$), $\pB_y \to \FT \ot \pB_y$ is an isomorphism too. In this fashion, one could reduce the proof that $\al_{\la}$ is an eigenmap to the case when $w$ is a $p$-distinguished involution, where it follows from the unit axiom as in the proof above.

This argument can also apply to left $p$-cells, provided one could prove that the full twist is in the Drinfeld center, and the eigenmaps are morphisms in the Drinfeld center.
Then one can left multiply $\FT \ot \pB_w$ by $\pB_x$, and obtain something canonically isomorphic to $\FT \ot \pB_x \pB_w$, compatibly with the map $\al_{\la}$. \end{rmk}

\begin{cor} \label{cor:mainthmproven} Theorem \ref{thm:C2char2diag}, stating the existence of eigenmaps in type $C_2$ for $p=2$, holds true. \end{cor}

\begin{proof} By construction, the chain maps of \eqref{eigenmaps} satisfy Dream \ref{dream:eigenmapsareunits}. The other dreams and conjectures have been verified in type $C_2$ earlier. Thus Proposition \ref{prop:dreamsmakeeigen} finishes the proof. \end{proof}

\subsection{The $s$-eigenmap and 2-torsion} \label{eigentorsion}

Let us consider the new eigenmap $\al_s$ in more detail. One can compute that the space of all chain maps $\one(0)[4] \to \FT$ modulo
homotopy is the $\Z$-span of $\al_s$. There is a homotopy, a non-chain map $\one(0)[3] \to \FT$ of the form \begin{equation} h = \left[ \begin{array}{c} \pitchcupred \\ \startdotred\startdotblue \\ 0 \\ 0 \\ 0
\end{array} \right], \qquad \qquad d_3 \circ h = \left[ \begin{array}{c} -2 \startdotred \\ 0 \\ \idtothree \\ \idtow \end{array} \right]. \end{equation} In particular, $2 \al_s$ is
homotopic to \begin{equation} \phi := \left[ \begin{array}{c} 0 \\ 0 \\ \idtothree \\ \idtow \end{array} \right]. \end{equation} Since $\phi$ is built from morphisms factoring through
objects in $< \la_s$, $\phi$ is inside $\IC^p_{< \la_s}$. However, $\al_s$ is nonzero in the quotient by $\IC^p_{< \la_s}$, since it is a $\la_s$-eigenmap. Thus the subspace of morphisms in $\IC^p_{< \la_s}$ has index $2$.

Now, when $2$ is inverted, we can work instead with the version of $\FT$ found in \eqref{FTchar0}. One can confirm that the space of chain maps modulo homotopy $\one(0)[4] \to \FT$ is generated by the image of $\phi$ composed with the Gaussian elimination homotopy equivalence, which we will also abusively call $\phi$:
\[ \phi = \left[ \begin{array}{c} 0 \\ \idtothree \\ \idtow \end{array} \right]. \]
From the perspective of \eqref{FTchar0}, it is surprising that $\frac{\phi}{2}$ has intrinsic meaning.

\subsection{Methods of computation} \label{C2methods}

We typically combined two methods to perform computations most effectively.

The first method is direct Gaussian elimination. For the computation of $\HT$, suppose we have computed already that (c.f. \eqref{HTcharboth} for the differentials)
\begin{subequations}
\begin{equation} F_{tst} = F_t F_s F_t \cong \left(
\begin{tikzpicture}
\node (tst) at (2.5,0) {$B_{tst}(0)$};
\node (st) at (5,.5) {$B_{st}(1)$};
\node at (5,0) {$\oplus$};
\node (ts) at (5,-.5) {$B_{ts}(1)$};
\node (s) at (7.5,.5) {$B_{s}(2)$};
\node at (7.5,0) {$\oplus$};
\node (t) at (7.5,-.5) {$B_{t}(2)$};
\node (id) at (10,0) {$\one(3)$};
\path
	(tst) edge (st)
	(tst) edge (ts)
	(st) edge node[above] {$d_2$} (s)
	(st) edge (t)
	(ts) edge (s)
	(ts) edge (t)
	(s) edge node[above] {$\finaldotred$} (id)
	(t) edge node[below] {$\finaldotblue$} (id);
\end{tikzpicture} \right). \end{equation}
Note that, like all Rouquier complexes for reduced expressions, this appears up to a shift as a subcomplex of $\HT$ itself.

Now we compute $F_{tst} \ot B_s$ by Gaussian elimination, postponing the question of why until after the computation.
\begin{equation} F_{tst} \ot B_s \cong \left(
\begin{tikzpicture}
\node (tst) at (2.5,0) {$B_{tst}B_s(0)$};
\node (st) at (5,.5) {$B_s B_t B_s(1)$};
\node at (5,0) {$\oplus$};
\node (ts) at (5,-.5) {$B_{ts} B_s(1)$};
\node (s) at (7.5,.5) {$B_{s} B_s(2)$};
\node at (7.5,0) {$\oplus$};
\node (t) at (7.5,-.5) {$B_{ts}(2)$};
\node (id) at (10,0) {$B_s(3)$};
\path
	(tst) edge (st)
	(tst) edge (ts)
	(st) edge node[above] {$d_2 \linered$} (s)
	(st) edge (t)
	(ts) edge (s)
	(ts) edge (t)
	(s) edge node[above] {$\finaldotred \linered$} (id)
	(t) edge node[below] {$\finaldotblue \linered$} (id);
\end{tikzpicture} \right). \end{equation}
The map $\splitred$ gives a splitting of the map $\finaldotred \linered$ in the last differential, allowing us to eliminate $B_s(3)$ with one summand of $B_s B_s(2)$. Having done this, the remaining differential from $B_s B_t B_s(1)$ to the other summand $B_s(1)$ is the original differential $-\linered \finaldotblue \linered$ to $B_s B_s(2)$, composed with the projection map $\mergered$ from $B_s B_s(2)$ to $B_s(1)$, see \eqref{BsBsip}.
\begin{equation} \left(
\begin{tikzpicture}
\node (tst) at (2.5,0) {$B_{tst}B_s(0)$};
\node (st) at (5,.5) {$B_s B_t B_s(1)$};
\node at (5,0) {$\oplus$};
\node (ts) at (5,-.5) {$B_{ts} B_s(1)$};
\node (s) at (7.5,.5) {$B_{s}(1)$};
\node at (7.5,0) {$\oplus$};
\node (t) at (7.5,-.5) {$B_{ts}(2)$};
\path
	(tst) edge (st)
	(tst) edge (ts)
	(st) edge node[above] {$-\redpitchforkdown$} (s)
	(st) edge (t)
	(ts) edge (s)
	(ts) edge (t);
\end{tikzpicture} \right). \end{equation}
The sign on the pitchfork can be removed later (up to isomorphism of complexes) by multiplying $B_s(1)$ by $-1$.

Similarly, using $\lineblue \splitred$ we can eliminate $B_{ts}(2)$ with one summand of $B_{ts} B_s(1)$. In theory this could modify the surviving differential from $B_s B_t B_s(1)$ to $B_s(1)$ by subtracting a \emph{zigzag term}, the (differential-splitting-differential) composition $B_s B_t B_s(1) \to B_{ts}(2) \to B_{ts}(2) \to B_s(1)$. One can compute that this zigzag term is
\[ \finaldotred \finaldotblue \ig{.5}{needlered} = 0.\]

To cancel the summand $B_{ts}(0)$ inside $B_{ts} B_s(1)$ with the summand $B_{ts}(0)$ inside $B_{tst} B_s(0)$, we can use the splitting given by the map $i_{ts}$ from \eqref{BtstBsip}. Again, one can compute that no zigzag term will affect the differential $B_{w_0}(0) \to B_s B_t B_s(1)$ that remains. So we compute the minimal complex
\begin{equation} \label{FtstBs} F_{tst} \ot B_s \cong \left(
\begin{tikzpicture}
\node (a) at (0,0) {$\un{B}_{w_0}(0)$};
\node (b) at (2.5,0) {$B_s B_t B_s(1)$};
\node (c) at (5,0) {$B_s(1)$};
\node at (2,-.5) {};
\path
	(a) edge node[above] {$\wtosts$} (b)
	(b) edge node[above] {$\redpitchforkdown$} (c);
\end{tikzpicture} \right). \end{equation}
\end{subequations}

On the other hand, suppose we do the computation more lazily, only keeping track of enough information to ensure that the terms we expect to Gaussian eliminate will be connected by isomorphisms (rather than zero, see the warning in \cite[Exercise 19.14]{EMTW}). We will end up with a complex of the form
\[ \left( \un{B}_{w_0}(0) \to B_s B_t B_s(1) \to B_s(1) \right), \]
but where we do not know the differentials. However, we can easily rederive the differentials from first principles! Both $\Hom(B_{w_0},B_s B_t B_s(1))$ and $\Hom(B_s B_t B_s(1), B_s(1))$ are free $\Z$-modules of rank $1$. Thus our differentials agree with those of \eqref{FtstBs} up to scalars $a_i \in \Z$.
\begin{equation} F_{tst} \ot B_s \cong \left(
\begin{tikzpicture}
\node (a) at (0,0) {$\un{B}_{w_0}(0)$};
\node (b) at (2.5,0) {$B_s B_t B_s(1)$};
\node (c) at (5,0) {$B_s(1)$};
\node at (2,-.5) {};
\path
	(a) edge node[above] {$a_1 \wtosts$} (b)
	(b) edge node[above] {$a_2 \redpitchforkdown$} (c);
\end{tikzpicture} \right). \end{equation}
Suppose e.g. that $a_2 = 0$. Then the complex $F_{tst} B_s$ would be decomposable in the homotopy category. However, $B_s$ is indecomposable and $F_{tst} \ot (-)$ is an invertible functor on the homotopy category, so $F_{tst} B_s$ must be indecomposable, a contradiction. Similarly, suppose that $a_2 \ne \pm 1$. Then after specialization to some finite characteristic, $F_{tst} B_s$ would be decomposable. This is again a contradiction, since $F_{tst}$ is invertible over any field. Thus $a_1 = \pm 1$ and $a_2 = \pm 1$. Using isomorphisms which scale the chain objects, we can assume $a_1 = a_2 = 1$. This method also has the added bonus of proving a uniqueness statement: \eqref{FtstBs} is the unique absolutely indecomposable complex with the same underlying chain objects, up to isomorphism.

What we wished to compute was $\HT = F_{tst} F_s$. Note that $F_s$ is the cone of a map $B_s \to \one(1)$, so $\HT$ is the cone of a chain map $F_{tst} B_s \to F_{tst}(1)$. We
have already computed $F_{tst} B_s$ and $F_{tst}(1)$ above. Instead of carefully computing what this chain map is, we take the ``lazy'' approach of computing all possible chain
maps. One can compute directly that there is a three-dimensional space (rather, a free rank 3 $\Z$-module) of chain
maps $F_{tst} B_s \to F_{tst}(1)$, of which a two-dimensional subspace is nulhomotopic. Homotopic chain maps give rise to homotopy-equivalent cones, which have isomorphic minimal complexes. A quick way to compute the
space of possible chain maps modulo homotopy is to first use the homotopies to assert that certain coefficients of the chain map are zero. Then one computes the remaining
constraints on the chain map.

For example, let us compute how chain maps from $F_{tst} B_s$ to $F_{tst}(1)$ behave in homological degree $2$, where one has a map from $B_s(1)$ to $B_s(3) \oplus B_t(3)$. The space
of maps $B_s(1) \to B_t(3)$ is one-dimensional, spanned by $\redtoblue$. The homotopy $\startdotblue \linered$ produces a nulhomotopic chain map, whose coefficient of $\redtoblue$ in
homological degree $2$ is $1$. By adding a multiple of this nulhomotopic chain map to any chain map, we can assert that the map $B_s(1) \to B_t(3)$ is zero. What remains in degree $2$ is some morphism $B_s(1) \to B_s(3)$, which must be killed by the differential $\finaldotred$ on the target to be a chain map. There is only a one-dimensional space of such
morphisms, spanned by the triangle map $\trianglered$.

Using these methods one computes that any chain map $F_{tst} B_s \to F_{tst}(1)$ is, up to homotopy, a scalar $a \in \Z$ times the chain map whose cone yields the complex in
\eqref{HTchar2}. Once again, $a = \pm 1$ is the only choice for which the cone of this map is absolutely indecomposable, which $\HT$ must be. Up to isomorphism of complexes, we can
assume $a = 1$.

Let us call the method above the \emph{uniqueness method}. Alternatively, we could have used the \emph{(explicit) Gaussian elimination method} to compute $F_{tst} \ot F_s$ directly.
There would be exactly the same three cancellation moves as in $F_{tst} \ot B_s$: one term $B_s(3)$ would cancel a summand of $B_s B_s(2)$, etcetera. However, each cancellation would
produce many more zigzag terms, involving differentials going from the $F_{tst} B_s$ half of the complex to the $F_{tst}(1)$ half of the complex. For example, the cancellation of the
summands $B_s(3)$ produced no zigzag terms in $F_{tst} B_s$ for trivial reasons, because there were no surviving terms in homological degree $3$. The analogous cancellation in $F_{tst}
F_s$ does produce a zigzag term, and keeping track of this might be inconvenient.

To emphasize the difference between these two methods, the uniqueness method seems like it has to do more work because it computes the space of all possible chain maps, but it does so
from a small complex to a large complex. The Gaussian elimination method will simplify a large complex to a small complex, while keeping track of extra zigzags to another large
complex, and this becomes quite painful. The only real advantage of the Gaussian elimination method is that it does not require first computing $F_{tst} \ot B_s$.


However, computing $F_{tst} B_s$ was not a waste of time. For example, we can now immediately compute $\HT \ot B_s$, since $\HT \cong F_t F_s F_t F_s$ and $F_s B_s \cong B_s(-1)$. Thus
$\HT B_s \cong F_{tst} B_s(-1)$ is a three-term complex. Tensoring this with $B_t$, we can now apply Gaussian elimination to cancel a copy of $B_{st}(0)$ in homological degrees $1$ and
$2$, and a copy of $B_{w_0}(0)$ in degrees $0$ and $1$. This can be done explicitly using the decompositions in \eqref{wtip} and \eqref{CstsBtip}, see \cite{C2supplement} for details.
The result is a two-term complex \[ \HT \ot B_{st} \cong \left( \un{B}_{w_0}(-2) \to B_{st}(0) \right). \] From here it is easy to compute the complex $\HT \ot (B_s B_t B_s)$ as well.

Our preferred method of computing $\HT \ot B_t$ is to first compute $F_{sts}$ and then $F_{sts} B_t$, another big calculation with Gaussian elimination like the computation of
$F_{tst} B_s$ was. All that remains at the end is a simple two-term complex $\left( \un{B}_{w_0}(-1) \to B_{tst}(0) \right)$. Because almost everything eventually is eliminated,
one need not keep careful track of most differentials (one must just ensure that one does not make an erroneous cancellation), and can use an easy uniqueness argument at the end to
determine the differential. From here it is straightforward to compute $\HT \ot B_{ts}$ and $\HT \ot B_{tst}$ as well.

Finally, we need to compute the full twist, which we accomplish by computing $\HT \ot F_w$ starting with $w = 1$ (already done), and steadily increasing the length of $w$. To begin,
$\HT \ot F_t$ is the cone of a map $\HT B_t \to \HT (1)$, from a simple two-term complex to a complicated complex. Computing all the chain maps (modulo homotopy, but there are no
homotopies in this example) is relatively easy, using the morphisms and relations given in \S\ref{sec:C2diag}, such as \eqref{useful}. For the computation of $\HT \ot F_s$,
finding all the chain maps from $\HT \ot B_s$ to $\HT(1)$ is somewhat more of an ordeal, but the alternative (careful Gaussian elimination with an extra space of differentials to
$\HT(1)$) is worse.

Now suppose we wish to compute $\HT \ot F_{ts}$. We know this is the cone of a chain map from $\HT \ot B_{ts}$ to $\HT \ot \left( B_s(1) \oplus B_t(1) \to R(2)\right)$. This latter
tensor product we already know by our computations above, because it has both $\HT \ot F_s$ and $\HT \ot F_t$ as subcomplexes. Now we compute the space of all chain maps from the
two-term complex $\HT \ot B_{ts}$ to the large complex $\HT \ot \left( B_s(1) \oplus B_t(1) \to R(2)\right)$, which is not as terrible as it seems. The complex $\HT \ot \left( B_s(1) \oplus B_t(1) \to R(2)\right)$ may be quite large, but only homological degrees between $0$ and $2$ are relevant for computing chain maps from the two-term complex $\HT \ot B_{ts}$.

Continuing, we compute $\HT \ot F_w$ for all $w \in W$, until we finally find our formula for the full twist. At each step, the result was unique up to homotopy equivalence, assuming
the indecomposability of the result over any field. The computation of $\HT \ot B_s B_t B_s$ is by far the nastiest part of this process, as the nontrivial automorphisms of $B_s B_t
B_s$ lead to an extra degree of freedom (this nastiness is avoided in characteristic zero). Finally, one additional trick was used: that $\HT \ot F_w \ot B_{w_0}$ is indecomposable,
which requires the differential from the lowest degree to have enough nonzero components to be able to cancel all extra copies of $B_{w_0}$.

Details on these computations can be found in \cite{C2supplement}.

\section{Type $C_3$} \label{sec:C3}

Let us discuss the case where $W$ has type $C_3$ and $p=2$. The simple reflections are denoted $\{s,t,u\}$, with $m_{st} = 3$ and $m_{tu} = 4$. The longest element is denoted $w_0$ as usual.

\subsection{$p$-cells and their eigenvalues}

The $p$-cells in this example can be found\footnote{Technically, \cite[p16]{JensenABC} describes the right $p$-cells. Using symmetry one can determine the left $p$-cells and then
the two-sided $p$-cells. There are a couple typos to be found on \cite[p16]{JensenABC}. The elements $21$ and $23$ should be swapped in the partition of $C_2$ into $p$-cells. The
last element in $C_7$ should read $21213212$.} in \cite[p16]{JensenABC}, and also are close at hand in the appendix, under labelling $(s, t, u) = (1, 2, 3)$. Let $\mu$ denote the (two-sided\footnote{The $0$-cell $\mu$ is the union of the right
$0$-cells denoted $C_6$, $C_9$ and $C_{11}$ on \cite[p16]{JensenABC}, and $\nu$ is the union of $C_7$, $C_{10}$, and $C_{12}$.}) $0$-cell containing $sts$, and $\nu$ the $0$-cell
containing $stsuts$. Although there are eleven elements $w \in W$ for which $b_w \ne \pb_w$, every $0$-cell aside from $\mu$ and $\nu$ is also a $p$-cell, with $\xb$ and $\cb$ and
$\Schu_L$ unchanged. Something quite fascinating occurs with $\mu$ and $\nu$.

For the computation below, the following elements of the $p$-canonical basis play a role.
\begin{subequations} \label{pcanC3}
\begin{equation} \pb_{sts} = b_{sts}. \end{equation}
\begin{equation} \pb_{stsuts} = b_{stsuts} + (v+v^{-1}) b_{sts}. \end{equation}
\begin{equation} \pb_{w_0 u} = b_{w_0 u} + b_{stsuts}. \end{equation}
\begin{equation} \pb_{w_0} = b_{w_0}. \end{equation}
\end{subequations}
Note that $\pb_{stsuts}$ is the ``first'' example of a \emph{non-perverse} $p$-canonical basis element, meaning that its coefficients in the ordinary KL basis are Laurent polynomials rather than integers.

The two $0$-cells $\mu$ and $\nu$ divide into three $p$-cells as follows.
\begin{equation} \la_{sts} = \{sts\}, \qquad \mu' = \mu \setminus \{sts\} \cup \{stsuts\}, \qquad \nu' = \nu \setminus \{stsuts\}. \end{equation}
Thus the $p$-cells do not form a partition of the $0$-cells.

We compute\footnote{The following computations were done by computer, with thanks to Joel Gibson and Geordie Williamson.} the action of the half twist. On the ordinary KL basis we have
\begin{subequations}
\begin{equation} \label{halftimesstscanon}\half_W \cdot b_{sts} = v^{-3} b_{w_0} - v^{-2} b _{w_0 u} + v^{-1}  b_{sts}, \end{equation}
\begin{equation} \half_W \cdot b_{stsuts} = v^{-6} b_{w_0} -v^{-3} b_{stsuts}, \end{equation}
from which one easily computes that
\begin{equation} \label{halftimesstspcanon} \half_W \cdot \pb_{sts} = v^{-3} \pb_{w_0} - v^{-2} \pb_{w_0 u} + v^{-2} \pb_{stsuts} - v^{-3} \pb_{sts}, \end{equation}
\begin{equation} \label{halftimesstsutspcanon} \half_W \cdot \pb_{stsuts} = (v^{-2} + v^{-4} + v^{-6}) \pb_{w_0} - (v^{-1} + v^{-3}) \pb_{w_0 u} + v^{-1} \pb_{stsuts}. \end{equation}
\end{subequations}
As a consequence we have
\begin{subequations}
\begin{equation} \half_W \cdot b_{sts} \equiv v^{-1} b_{sts} + I_{< \mu}, \end{equation}
\begin{equation} \half_W \cdot \pb_{sts} \equiv -v^{-3} \pb_{sts} + I_{< \la_{sts}}, \end{equation}
and
\begin{equation} \half_W \cdot b_{stsuts} \equiv -v^{-3} b_{stsuts} + I_{< \nu}, \end{equation}
\begin{equation} \half_W \cdot \pb_{stsuts} \equiv v^{-1} \pb_{stsuts} + I_{< \mu'}. \end{equation}
\end{subequations}
As you can see, the eigenvalues of $sts$ and $stsuts$ were swapped!

Conjecture \ref{conj:htdecat} holds, consistent with the values
\begin{equation} \label{C3eigenvalues} \pxb(\la_{sts}) = -3, \pcb(\la_{sts}) = 3, \qquad \pxb(\mu') = -1, \pcb(\mu') = 2, \qquad \pxb(\nu') = -3, \pcb(\nu') = 1. \end{equation}
Note that $\pSchu_L$ and $\Schu_L$ agree, and both are the identity map outside of $\nu'$.

\subsection{Musings on the categorification: guessing the minimal complexes}

Now we discuss how we expect things to play out in the categorification, with an attempt to understand why $sts$ and $stsuts$ swapped eigenvalues. Everything below should be considered as an educated guess, or as the ravings of a soothsayer. In characteristic zero, Conjecture \ref{conj:htcatchar0} has still not been proven in type $C_3$, but we assume it for sanity later in this discussion.

Our arguments will compare characteristic zero and characteristic $p$, by working with the Hecke category over $\Z$. By \cite{EWGr4sb, EWLocalized}, morphism spaces in the Hecke
category are free $\Z$-modules, whose graded rank is determined by the Soergel hom formula. All assertions about the size of Hom spaces stem from this fact. The only prime for
which the $p$-canonical basis differs from the KL basis in type $C_3$ is $p=2$. Thus we expect the decomposition of bimodules over $\Z$ to mimic that in characteristic $2$, with no
additional subtleties. We write $\pB_x$ for an indecomposable bimodule over $\Z$, the top summand of its Bott-Samelson bimodule, which categorifies $\pb_x$.

The fact that $\pb_{w_0 u} = b_{w_0 u} + b_{stsuts}$ should be interpreted as follows.  From the Soergel hom formula there are maps
\[ q \co \pB_{w_0 u} \to \pB_{stsuts}, \qquad j \co \pB_{stsuts} \to \pB_{w_0 u}\]
which span their respective Hom spaces in degree zero.
If $e := j \circ q$ then $e^2 = 2e$. After inverting $2$, $\frac{e}{2}$ is the idempotent projecting to the common summand $B_{stsuts}$. The complementary summand in $\pB_{w_0 u}$ is $B_{w_0 u}$. Meanwhile, in characteristic $2$, $e$ is nilpotent and $\pB_{w_0 u}$ is indecomposable, with ``the extra copy of $B_{stsuts}$ still stuck on.'' The maps $q$ and $j$ are now in the Jacobson radical of the category.

In similar fashion, $\pb_{stsuts} = b_{stsuts} + (v+v^{-1}) b_{sts}$ indicates the existence of maps $p_{\pm 1} \co \pB_{stsuts} \to \pB_{sts}$ of degree $\pm 1$, and maps $i_{\pm
1}$ in the opposite direction, which project to the common summands $B_{sts}(\pm 1)$ up to multiplication by $2$.

In characteristic not equal to $2$ we expect the following minimal complex:
\begin{subequations} \label{HTstsC3eqns}
\begin{equation} \label{HTstsC3char0} \HT \ot B_{sts} \cong \left( \un{B}_{w_0}(-3) \to B_{w_0 u}(-2) \to B_{sts}(-1) \right). \end{equation}
In characteristic $2$ or over $\Z$ we expect
\begin{equation} \label{HTstsC3char2} \HT \ot \pB_{sts} \cong \left( \un{\pB}_{w_0}(-3) \to \pB_{w_0 u}(-2) \to \pB_{stsuts}(-2) \to \pB_{sts}(-3) \right).\end{equation}
\end{subequations}
We explain these expectations below, but first we discuss the differentials. All differentials live in a Hom space which has rank $1$ over $\Z$. Choosing a generator of each Hom space as a $\Z$-module, we obtain the complex above. Note that $d^2 = 0$ simply because it lives in a zero Hom space (e.g. there are no morphisms of degree $2$ from $B_{w_0}$ to $B_{sts}$). The second differential in \eqref{HTstsC3char2} is the map $q$ discussed above, and the third differential is $p_{-1}$. After specialization to characteristic $2$, the
differentials live in the Jacobson radical.

That $\HT \ot B_{sts}$ should be built from $B_{w_0}(-3)$ and $B_{sts}(-1)$ in even homological degree, and $B_{w_0 u}(-2)$ in odd homological degree, is expected from
\eqref{halftimesstscanon}. One might worry that the minimal complex of $\HT \ot B_{sts}$ might have additional chain objects which are invisible in the Grothendieck group. For
example, some $B_w(k)$ might appear in two different homological degrees, one even and one odd. We call this a \emph{cancellation phenomenon}. The cancellation phenomenon
\emph{does} occur in finite characteristic, see \eqref{FTchar2}, and is a legitimate worry. In characteristic zero it was proven \cite{EWHodge} that there is no cancellation in Rouquier
complexes which are positive lifts (like $\HT$), because they are perverse. No analogous result has been proven for complexes like $\HT \ot B_w$, but it is still expected that the
chain objects in the minimal complex have no cancellation in characteristic zero.

\begin{lemma} In the absence of any cancellation phenomenon, $\HT \ot B_{sts}$ must have the form given in \eqref{HTstsC3char0}, with the differentials stated (up to isomorphism).
\end{lemma}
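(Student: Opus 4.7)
The plan has three steps. First, from the decategorified identity \eqref{halftimesstscanon} and the no-cancellation hypothesis we determine the chain objects. Second, we use Conjecture \ref{conj:htcatchar0} and indecomposability of $\HT \ot B_{sts}$ to place them in specific homological degrees. Third, we use the Soergel hom formula to pin down the differentials up to rescaling of chain objects.

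For step one, identity \eqref{halftimesstscanon} yields $[\HT \ot B_{sts}] = v^{-3}[B_{w_0}] - v^{-2}[B_{w_0 u}] + v^{-1}[B_{sts}]$. The no-cancellation hypothesis then forces the minimal complex to contain exactly one copy of each of $B_{w_0}(-3)$, $B_{w_0 u}(-2)$, $B_{sts}(-1)$, with the first and third in even homological degrees and the second in odd homological degree.

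For step two, Conjecture \ref{conj:htcatchar0} places the top summand as $B_{\Schu_L(sts)}(\xb(\mu)) = B_{sts}(-1)$ in homological degree $\cb(\mu)$, with the remaining chain objects living in strictly lower cells and strictly lower homological degrees; the values $\Schu_L(sts)=sts$ and $\xb(\mu) = -1$ are forced by the leading term $v^{-1} b_{sts}$ of \eqref{halftimesstscanon} modulo $I_{<\mu}$. Since $\HT \ot (-)$ is an autoequivalence of the homotopy category and $B_{sts}$ is indecomposable, $\HT \ot B_{sts}$ is itself indecomposable, so the three chain objects must occupy three consecutive homological degrees; otherwise a chain object isolated by zero differentials would split off as a direct summand. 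The parity constraints from step one then pin the degrees down to $0, 1, 2$ (giving $\cb(\mu) = 2$), with $B_{w_0}(-3)$, $B_{w_0 u}(-2)$, $B_{sts}(-1)$ in degrees $0, 1, 2$ respectively.

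For step three, the Soergel hom formula, available over $\Z$ by \cite{EWGr4sb, EWLocalized}, shows that the degree-zero morphism spaces $\Hom(B_{w_0}, B_{w_0 u}(1))$ and $\Hom(B_{w_0 u}, B_{sts}(1))$, in which the two differentials respectively live, are each free $\Z$-modules of rank one. Each differential is therefore an integer scalar times a fixed generator. The scalars cannot vanish by the indecomposability argument above, and they must be units in $\Z$ because $\HT$ remains invertible upon specialisation to any field (a non-unit scalar would cause decomposition after base change to a residue field of characteristic dividing the scalar). Rescaling $B_{w_0 u}(-2)$ and $B_{sts}(-1)$ by $\pm 1$ yields the stated form, and $d^2 = 0$ follows from the vanishing of $\Hom(B_{w_0}, B_{sts}(2))$ in degree zero, a direct computation via Soergel's formula. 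The principal obstacle in the plan is the step-two claim that $\cb(\mu) = 2$, which requires combining the placement guaranteed by Conjecture \ref{conj:htcatchar0} with the consecutive-degree argument built on indecomposability; the rank-one Hom-space computations underpinning step three are mechanical by comparison.
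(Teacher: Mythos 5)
Your proposal gets the broad outline right — pin down the chain objects from \eqref{halftimesstscanon}, argue the differentials generate rank-one Hom spaces, and use indecomposability plus specialization to force them to be units — and this last part faithfully reproduces the paper's ``uniqueness method.'' But your step two has a genuine gap, and in addition you take a different (and in this context somewhat circular) route to fix the homological placement.

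The gap: after establishing that the three objects occupy consecutive homological degrees $n, n+1, n+2$ with $n$ even, you assert that ``the parity constraints then pin the degrees down to $0, 1, 2$ (giving $\cb(\mu) = 2$).'' Parity constraints only fix $n$ modulo $2$; they cannot distinguish $n = 0$ from $n = 2$ or any other even value. You have no stated argument that forces $n = 0$. The paper closes exactly this gap with a short but essential observation: the homology of the chain complex $\HT \ot B_{sts}$ in the category of $(R,R)$-bimodules is concentrated in homological degree zero, because that is true of $\HT$ and is preserved under tensoring with the flat bimodule $B_{sts}$. That is what forces the complex to start in degree zero. Without some equivalent input, $\cb(\mu) = 2$ is taken on faith rather than derived.

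Separately, you determine the ordering of the three chain objects by invoking Conjecture \ref{conj:htcatchar0} directly, whereas the paper instead reads the ordering off the (shifted) morphism spaces: $\Hom^0(B_{w_0}(-3), B_{w_0 u}(-2))$ and $\Hom^0(B_{w_0 u}(-2), B_{sts}(-1))$ are each free of rank one, while the reversed spaces (in degree zero) vanish, so any arrangement other than \eqref{HTstsC3char0} has a forced zero differential and hence a split summand. This is not a cosmetic difference: the lemma is stated under only the no-cancellation hypothesis precisely so that it can be used as independent evidence for what Conjecture \ref{conj:htcatchar0} predicts in type $C_3$; invoking the conjecture to establish the placement undermines that purpose. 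Your observation that the source and target Hom spaces are free of rank one over $\Z$ is correct and is the same input the paper uses, but you would need to also note the vanishing of the reversed-degree Hom spaces and then replace the parity step by the degree-zero-homology argument to complete a sound proof.
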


\begin{proof} First we claim that the choice of differentials in a complex of the form \eqref{HTstsC3char0} is unique up to isomorphism, given that $\HT \ot B_{sts}$ is absolutely
indecomposable. In previous chapters we explained the reasoning, which we briefly recall. The complex $\HT$ is invertible so it preserves indecomposable objects in the homotopy
category. This remains true after any specialization. If a differential did not generate its Hom space, then it becomes zero in some specialization, and the complex splits
nontrivially. There is only one generator of a free rank $1$ module up to sign, and we can rescale the chain objects by signs to modify the generators at will.

The only non-zero morphism spaces between the objects $B_{w_0}(-3)$ and $B_{w_0 u}(-2)$ and $B_{sts}(-1)$ are those indicated by the arrows in \eqref{HTstsC3char0}. Had one tried
to build a complex with these objects in a different order, then some differential is zero. Thus \eqref{HTstsC3char0} is the unique way to glue these three objects together into an
indecomposable complex, up to an overall homological shift. The overall homological shift is determined by considering the homology of this complex in the category of
$(R,R)$-bimodules. The homology of $\HT$ is known to be concentrated in degree zero, and thus the same holds for $\HT \ot B_{sts}$. \end{proof}

With \eqref{HTstsC3char0} as a given, let us try to find a complex well-defined over $\Z$ which is homotopy equivalent to \eqref{HTstsC3char0} after inverting $2$. In homological
degree $1$ we must have $\pB_{w_0 u}(-2)$ instead of $B_{w_0 u}(-2)$, because the latter is not defined over $\Z$. This contributes an extra summand $B_{stsuts}(-2)$ after
inverting $2$, which must be cancelled by some copy of $B_{stsuts}(-2)$ in homological degree $2$ or $0$. Over $\Z$, this cancelling copy can come from either $\pB_{stsuts}(-2)$ or
$\pB_{w_0 u}(-2)$. Having another copy of $\pB_{w_0 u}(-2)$ is unlikely: in characteristic zero it would produce another copy of $B_{w_0 u}(-2)$, needing to be cancelled by yet
another copy of $\pB_{w_0 u}(-2)$, which just repeats the same problem again. So we expect some copy of $\pB_{stsuts}(-2)$ in homological degree $0$ or $2$. We do not expect it to
appear in homological degree zero, as this would contribute additional copies of $B_{sts}$ in degree zero which would need to be cancelled out. The cases we found unlikely above
all lead to a cancellation phenomenon. These arguments are slightly optimistic, and we discuss this optimism in several remarks below.

A copy of $\pB_{stsuts}(-2)$ in homological degree $2$ would account already for the term $B_{sts}(-1)$ in homological degree $2$, but would also contribute a copy of $B_{sts}(-3)$. Optimism suggests that it is cancelled in degree $3$ rather than degree $1$, yielding our guess \eqref{HTstsC3char2}. Note that the differentials are unique up to isomorphism, given the absolute indecomposability of $\HT \ot \pB_{sts}$.

Here are some additional comments on this line of reasoning.

\begin{rmk} In the examples we know where the cancellation phenomenon occurs, it was not possible to construct an indecomposable complex without these extra terms; one can not
arrange for $d^2 = 0$. The complex in \eqref{HTstsC3char2} is well-defined without the need for extra terms. \end{rmk}

\begin{rmk} In Remarks \ref{rmk:indegree2c} and \ref{rmk:indegree2credux} we discuss properties of the half and full twist which follow from our conjectures. Suppose that the
minimal complex of $\HT$ does not contain $\pB_w$ in any homological degree less than $\pcb(w)$. For example, only $\pB_{w_0}$ appears in homological degree zero, and only terms in
the bottom two cells $\la_0$ and $\nu'$ can appear in homological degree $1$. Since $p$-cells correspond to monoidal ideals, the same must be true in $\HT \ot \pB_{sts}$. This
rules out the appearance of $\pB_{stsuts}$ in degree zero, or $\pB_{sts}$ in degree $1$, etcetera. \end{rmk}

Now we consider the other element $stsuts$. In characteristic zero we expect
\begin{subequations}
\begin{equation} \label{HTstsutsC3char0} \HT \ot B_{stsuts} \cong \left( \un{B}_{w_0}(-6) \to B_{stsuts}(-3) \right). \end{equation}
Remembering that $\pB_{stsuts}$ splits in characteristic zero as $B_{stsuts} \oplus B_{sts}(-1) \oplus B_{sts}(+1)$, we see that $\HT \ot \pB_{stsuts}$ should have the same size as \eqref{HTstsutsC3char0} plus two copies of \eqref{HTstsC3char0}. Using quantum numbers for grading shifts to save space, we expect
\begin{equation} \label{HTstsutsC3char2} \HT \ot \pB_{stsuts} \cong \left( [3]\un{\pB}_{w_0}(-4) \to [2]\pB_{w_0 u}(-2) \to \pB_{stsuts}(-1) \right),\end{equation}
\end{subequations}
matching \eqref{halftimesstsutspcanon}.
Note that $\pB_{stsuts}(-1)$ contains the $[2]$ copies of $B_{sts}(-1)$ expected to appear in homological degree $2$. No copies of $\pB_{sts}$ may appear, because the monoidal ideal generated by $\pB_{stsuts}$ does not contain $\pB_{sts}$ (which is in a higher $p$-cell). The justifications for guessing \eqref{HTstsutsC3char0} and \eqref{HTstsutsC3char2} are similar to the arguments above. One difference is that there is no a priori reason why $d^2 = 0$ in \eqref{HTstsutsC3char2}, and it would not be surprising if there were additional cancelling copies of $\pB_{w_0}(-4)$ in degrees $0$ and $1$, needed to force $d^2 = 0$. This would not affect the discussion below.

\subsection{Musings on the categorification: eigenvalue swap}

So why did $\pb_{stsuts}$ inherit the eigenvalue of $b_{sts}$? Let us pose the question more generally. Suppose that $w, x \in W$ (think $x = sts$ and $w = stsuts$) satisfy the property that $w$ is in a strictly lower $0$-cell and a lower $p$-cell than $x$, and that
\begin{equation} \label{wxpair} \pb_w = b_w + f b_x \end{equation}
for some nonzero $f \in \N[v,v^{-1}]$. We claim that $\pB_w$ should inherit the eigenvalue of $B_x$. For sake of simplicity we assume in the argument below that $\Schu_L$ is the identity operator.

Assuming our conjectures hold, $\HT \ot B_w$ is concentrated in homological degrees $\le \cb(w)$. However, $\pB_w$ has an extra $f$ copies of $B_x$, and $\HT \ot B_x$ has terms appearing in homological degree $\cb(x)$ as well. The statistic $\cb$ is monotone in the cell order, so $\cb(w) < \cb(x)$. Thus we expect that, after passage to characteristic zero, $\HT \ot \pB_w$ should have $v^{\xb(x)} f$ copies of $B_x$ in degree $\cb(x)$. However, $\HT \ot \pB_w$ can only have terms in cells less than that of $w$. Thus in the lift over $\Z$, these copies of $B_x$ can not come from copies of $\pB_x$. Though other options are theoretically possible, the simplest solution to obtaining $v^{\xb(x)} f$ copies of $B_x$ is from one copy of $v^{\xb(x)} \pB_w$. Thus, we expect that $\HT \ot \pB_w$ has one copy of $\pB_w$ in homological degree $\cb(x)$ and with grading shift $\xb(x)$, giving it the eigenvalue of $B_x$.

\begin{rmk} This heuristic argument did not depend on the value of $f$. In particular, it did not matter that $\pb_{stsuts}$ was not perverse. \end{rmk}

\begin{rmk} There are no such pairs $(w,x)$ in any dihedral type. \end{rmk}

Now we ask: should $\pB_x$ inherit the eigenvalue of $B_w$? The answer is no: in our example, $\pxb(sts) = \xb(stsuts)$ but $\pcb(sts) \ne \cb(stsuts)$, they only agree in parity. The better question is: should we expect $\pxb(x) = \xb(w)$?  We do not have a convincing reason this equality should hold. However, it is easier to explain why, in our example, the shift $\xb(sts) - \pxb(sts) = 2$ occurred. This seems to be related to the difference between the two summands $(v+v^{-1}) b_{sts}$ in the non-perverse basis element $\pb_{stsuts}$.

Suppose one has a pair $(w,x)$ as in \eqref{wxpair} and that the valuation of $f$ is $k$, i.e. $f = v^k + \ldots + v^{-k}$ for some $k > 0$. We do not require the coefficient of $v^k$
is one, but we will assume this for simplicity. Let $q_{-k}$ be the map\footnote{This matches $p^{-1}$ in the previous section, but $p$ was changed to $q$ because we also invoke the name of the prime.} of degree $-k$ which projects from $\pB_w$ to $B_x$ up to scalar (it splits after inverting $p$).

Assuming our conjectures hold, $\HT \ot \pB_x$ should have, in its final homological degree $\pcb(x)$, a copy of $\pB_x(\pxb(x))$. This is also the unique copy of $\pB_x$ in the
minimal complex of $\HT \ot \pB_x$. Suppose one posits that $q_{-k}$ appears in the differential which maps to this final degree (discussion below). The source of $q_{-k}$ would be a
copy of $\pB_w(\pxb(x)+k)$ in degree $\pcb(x)-1$. After inverting $p$, $q_{-k}$ splits, but $\pB_w(\pxb(x)+k)$ has another summand $B_x(\pxb(x)+2k)$. Were this term to survive to the
minimal complex after $p$ is inverted (discussion below), then since $\HT \ot B_x$ has a unique copy of $B_x$ in its minimal complex, it must be this one. This situation leads to
$\cb(x) = \pcb(x)-1$ and $\xb(x) = \pxb(x)+2k$.

The same heuristic argument can be used to predict in type $C_2$ that $\cb(s) = \pcb(s) -1$ and $\xb(s) = \pxb(s)$. This time, we expect a differential from $\pB_{sts}(\xb(sts))$
in degree $\pcb(s)-1$ to $\pB_s(\pxb(sts))$ in degree $\pcb(s)$. The other summand $B_{sts}(\pxb(sts))$ inside $\pB_{sts}(\pxb(sts))$ survives, and this is the Sch\"{u}tzenberger
dual of $s$.

Why should $q_{-k}$, or other maps which fail to split over $\Z$, appear as differentials in $\HT \ot \pB_x$? Before applying any Gaussian elimination, the tensor product of complexes
$\HT \ot \pB_x$ has many extra terms. In order for Gaussian elimination to remove these terms in characteristic zero, many projection maps have to appear as differentials, such as
$q_{-k}$. These happen not to split over $\Z$, so they survive as differentials.

Why should $B_x(\pxb(x)+2k)$ in degree $\pcb(x)-1$ survive to the minimal complex in $\HT \ot \pB_x$ after $p$ is inverted? It need not.

\begin{ex} In type $C_4$, number the simple reflections so that $\{s_1, s_2, s_3\}$ generate a parabolic subgroup of type $A_3$. Let $x = s_1 s_2 s_1 s_3 s_2 s_1$ be the longest
element of this parabolic subgroup, and let $w = x s_4 s_3 s_2 s_1$. We have $\pb_w = b_w + (v^2 + 1 + v^{-2}) b_x$ and $\pb_x = b_x$. We have $\pxb(x) = \xb(w) = -8$ and $\pxb(w) =
\xb(x) = -4$, another eigenvalue swap. Again, $\xb(x) - \pxb(x) = 4$ is the difference between the largest and smallest shifts of $b_x$ appearing in $\pb_w$. We expect $\pcb(x) =
\cb(x) + 2$, so the heuristic explanation above will not suffice. \end{ex}

In $C_4$, what might have happened to the copy of $B_x(\pxb(x)+2k)$ in degree $\pcb(x)-1$? No more copies of $\pB_x$ appear to Gaussian eliminate against it. One option is that $\pB_x$
has summands of the form $B_y$, and thus $\HT \ot \pB_x$ has summands of the form $\HT \ot B_y$; when $x$ is in a lower cell than $y$, this can easily explain copies of $B_x$ in degree
$\pcb(x)-1$. Another option is that additional copies of $\pB_w$ or some other $\pB_{w'}$ contain summands $B_x$. In the $C_3$ example it is hard to produce such a situation without a
massive and unlikely cancellation effect.

In truth, the situation is still a mystery, and additional exploration is needed.

\section{Appendix: $p$-cells and their eigenvalues} \label{appendix}

In this appendix we present explicitly the differences visible in the cell structure between the canonical and $p$-canonical bases for the Cartan types of rank at most 4. The
computations have been performed up to rank 6, but we only include a summary of ranks 5 and 6. These calculations are possible due to work of Geordie Williamson and the second author,
who have written an algorithm to calculate $p$-canonical bases based on the diagrammatic Hecke category. Tables of $p$-canonical bases for all the types described below, as well as the
Magma package \texttt{IHecke} used to work with the Hecke algebra and calculate the cell decompositions, are available on GitHub \cite{HeckeCode}. The algorithm which calculates the
bases themselves is described in \cite{GJW}.

Let us call a prime $p$ \textit{interesting} for a Cartan type if the $p$-canonical basis differs from the canonical basis.
Up to rank four, the interesting types are $C_2, C_3, C_4$ with $p = 2$, $B_2, B_3, B_4$ with $p = 2$, $D_4$ with $p = 2$, $F_4$ with $p = 2, 3$, and $G_2$ with $p = 2, 3$.
In all of these interesting types, we have verified the following statement.
Let $\lambda$ denote a left $p$-cell, and $w \in \lambda$ an element of that cell. Then
\begin{equation}
	h_{w_0} \cdot {^p b_w} \equiv (-1)^{\bf{c}(w)} v^{\bf{x}(w)} \cdot {^p b}_{\operatorname{Schu}_L^p(w)} \pmod {I_{< \lambda}}
\end{equation}
for some functions $\mathbf{c} \colon W \to \{0, 1\}$ and $\mathbf{x} \colon W \to \mathbb{Z}$ which are constant on left cells, and an involution $\operatorname{Schu}_L^p \colon W \to W$.
Magma code which verifies this conjecture is available as one of the examples in the \texttt{IHecke} package mentioned above.
Note that since the Hecke algebra only sees the parity of the actual function $\mathbf{c} \colon W \to \mathbb{Z}$, throughout this appendix we refer to the eigenvalue pairs $(\mathbf{x}, \pm)$ rather than $(\mathbf{x}, \mathbf{c})$, and really all we can verify is that the function $(-1)^{\mathbf{c}}$ is constant on left cells.

The following table collects some high-level statistics about the $p$-cells in the types listed above.
The rows of this table are (in order): the number of left cells, two-sided cells, unique $(\mathbf{x}, \pm)$-eigenvalues, $\operatorname{Schu}_L^p$-fixed points, and $\operatorname{Schu}_L^p$-moving points.

\begin{table}[h!]
\label{table:type-statistics}
\centering
\makebox[\textwidth][c]{\begin{tabular}{r | r r | r r r | r r r | r r r | r r | r r r}
Type & $C_2$ & $C_2$ & $G_2$ & $G_2$ & $G_2$ & $C_3$ & $C_3$ & $B_3$ & $C_4$ & $C_4$ & $B_4$ & $D_4$ & $D_4$ & $F_4$ & $F_4$ & $F_4$ \\
$p$ & 0 & 2 & 0 & 2 & 3 & 0 & 2 & 2 & 0 & 2 & 2 & 0 & 2 & 0 & 2 & 3 \\
\hline
\# Left & 4 & 5 & 4 & 6 & 5 & 14 & 17 & 17 & 50 & 63 & 63 & 36 & 38 & 72 & 106 & 78 \\
\# Two-sided & 3 & 4 & 3 & 4 & 4 & 6 & 8 & 8 & 10 & 15 & 15 & 11 & 12 & 11 & 17 & 12 \\
\# $(\mathbf{x}, \pm)$ & 3 & 4 & 3 & 4 & 3 & 6 & 7 & 7 & 9 & 12 & 12 & 7 & 8 & 9 & 12 & 9 \\
\# $\operatorname{Fix} \operatorname{Schu}_L^p$ & 4 & 6 & 4 & 12 & 4 & 32 & 40 & 40 & 280 & 332 & 332 & 112 & 120 & 544 & 1136 & 544 \\
\# $\operatorname{Unfix} \operatorname{Schu}_L^p$ & 4 & 2 & 8 & 0 & 8 & 16 & 8 & 8 & 104 & 52 & 52 & 80 & 72 & 608 & 16 & 608 \\
\end{tabular}
}
\caption{Statistics on $0$-cells vs $p$-cells for the Cartan types of rank at most 4.}
\end{table}

Similar tables for ranks 5 and 6 appear in \S\ref{subsec:5and6}. Notice that the statistics listed in the tables for types $C_n$ and $B_n$ are identical for $n \le 6$, but the cells
themselves are certainly not. It is unclear whether this is a general phenomenon, or just a low-rank coincidence.

Our labeling of the simple reflections follows the Dynkin diagram numbering which is built-in to MAGMA, and reproduced below.
\begin{table}[h!]
\centering
\begin{tabular}{r l r l r l r l r l}
$B_n$ & \dynkin[edge/.style={-},indefinite edge/.style={-,densely dotted}, edge-length=0.75cm,root-radius=0.08cm,labels={1,n-1,n}]B{*...**} &
$C_n$ & \dynkin[edge/.style={-},indefinite edge/.style={-,densely dotted},edge-length=0.75cm,root-radius=0.08cm,labels={1,n-1,n}]C{*...**} &
$D_4$ & \dynkin[edge/.style={-},edge-length=0.75cm,root-radius=0.08cm,labels={1,2,3,4}]D4 &
$G_2$ & \dynkin[backwards,edge/.style={-},edge-length=0.75cm,root-radius=0.08cm,labels={2,1}]G2 &
$F_4$ & \dynkin[edge/.style={-},edge-length=0.75cm,root-radius=0.08cm,labels={1,2,3,4}]F4
\end{tabular}
\caption{Dynkin diagram numbering convention used throughout the appendix.}
\end{table}
We use the standard Cartan matrix\footnote{For another programmer wanting to double-check conventions: look at the coefficients involved in the change-of-basis from the $2$-canonical bases of $C_3$ and $B_3$ to the canonical basis. All coefficients should be integers, except for one $(v^{-1} + v)$ in the table for $C_3$.} for this Dynkin diagram when defining the realization from which the diagrammatic Hecke category is constructed (see \cite[\S 3.1]{EWGr4sb}). Namely, if a double edge points from $i$ to $j$, then $\partial_i(\alpha_j) = -1$ and $\partial_j(\alpha_i) = -2$.

{\bf Acknowledgments} The author of this appendix learned everything he knows about calculating $p$-canonical bases from Thorge Jensen, and would like to thank him.


\subsection{Rank 2} \label{app:rank2}


The only interesting primes in rank $2$ are $p=2$ for $C_2$, and $p = 2, 3$ for $G_2$. These groups are small enough that we may explicitly draw the elements of the cells directly on to a Hasse diagram. We group elements by parentheses when they are related by the Sch{\"u}tzenberger involution. We also use the $y$-coordinates to label the $(\mathbf{x}, \pm)$-eigenvalues on each cell.

\begin{figure}[h]
\label{fig:c2-left-cells}
\centering
\begin{tikzpicture}[every node/.style={scale=0.85}]
\node [align=center] (c2p02) at (1.10, 1.80) {$(2, 212)$ \\ $12$};
\node [align=center] (c2p0id) at (0.00, 0.00) {$id$};
\node [align=center] (c2p01) at (-1.10, 1.80) {$(1, 121)$ \\ $21$};
\node [align=center] (c2p01212) at (0.00, 2.70) {$1212$};
\draw [-] (c2p01) -- (c2p01212);
\draw [-] (c2p02) -- (c2p01212);
\draw [-] (c2p0id) -- (c2p02);
\draw [-] (c2p0id) -- (c2p01);
\node [align=center] (c2p0two1) at (1.98, 1.80) {$\bullet$};
\node [align=center] (c2p0two1212) at (1.98, 2.70) {$\bullet$};
\node [align=center] (c2p0twoid) at (1.98, 0.00) {$\bullet$};
\draw [-] (c2p0two1) -- (c2p0two1212);
\draw [-] (c2p0twoid) -- (c2p0two1);
\node [align=center] (c2p2id) at (5.50, 0.00) {$id$};
\node [align=center] (c2p21) at (4.40, 0.90) {$1$};
\node [align=center] (c2p22) at (6.60, 1.80) {$(2, 212)$ \\ $12$};
\node [align=center] (c2p21212) at (5.50, 2.70) {$1212$};
\node [align=center] (c2p221) at (4.40, 1.80) {$21$ \\ $121$};
\draw [-] (c2p221) -- (c2p21212);
\draw [-] (c2p21) -- (c2p221);
\draw [-] (c2p22) -- (c2p21212);
\draw [-] (c2p2id) -- (c2p22);
\draw [-] (c2p2id) -- (c2p21);
\node [align=center] (c2p2two2) at (7.48, 1.80) {$\bullet$};
\node [align=center] (c2p2two1212) at (7.48, 2.70) {$\bullet$};
\node [align=center] (c2p2two1) at (7.48, 0.90) {$\bullet$};
\node [align=center] (c2p2twoid) at (7.48, 0.00) {$\bullet$};
\draw [-] (c2p2two2) -- (c2p2two1212);
\draw [-] (c2p2two1) -- (c2p2two2);
\draw [-] (c2p2twoid) -- (c2p2two1);
\node at (-3.0250000000000004, 0.0) {$(4, +)$};
\node at (-3.0250000000000004, 1.8) {$(0, -)$};
\node at (-3.0250000000000004, 2.7) {$(-4, +)$};
\node at (-3.0250000000000004, 0.9) {$(0, +)$};
\node at (-3.0250000000000004, 3.15) {$(\mathbf{x}, (-1)^{\mathbf{c}})$};
\node at (0.0, 3.15) {$C_2$, $p = 0$};
\node at (5.5, 3.15) {$C_2$, $p = 2$};
\end{tikzpicture}
\caption{Left and two-sided $0$-cells and $2$-cells in type $C_2$.}
\end{figure}
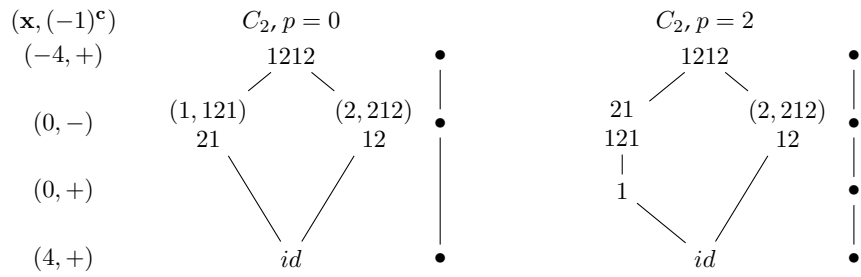

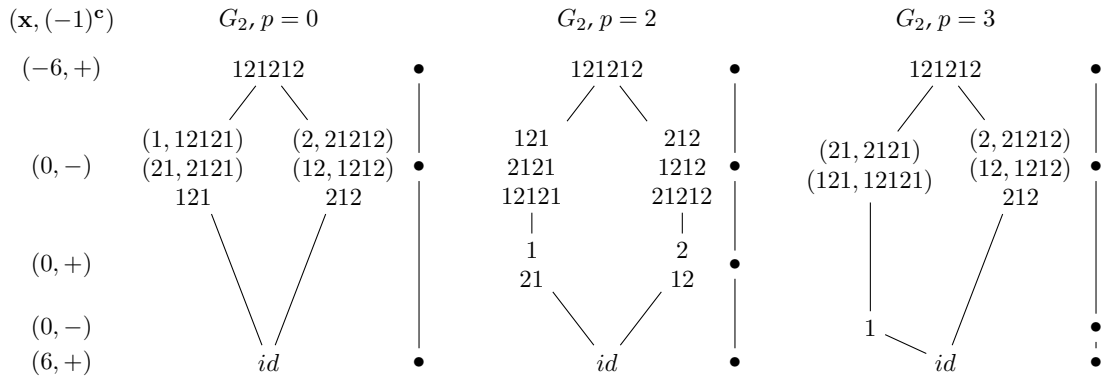
\begin{figure}
\label{fig:g2-left-cells}
\centering
\begin{tikzpicture}[every node/.style={scale=0.85}]
\node [align=center] (g2p01) at (-1.00, 3.90) {$(1, 12121)$ \\ $(21, 2121)$ \\ $121$};
\node [align=center] (g2p0id) at (0.00, 1.30) {$id$};
\node [align=center] (g2p0121212) at (0.00, 5.20) {$121212$};
\node [align=center] (g2p02) at (1.00, 3.90) {$(2, 21212)$ \\ $(12, 1212)$ \\ $212$};
\draw [-] (g2p01) -- (g2p0121212);
\draw [-] (g2p02) -- (g2p0121212);
\draw [-] (g2p0id) -- (g2p01);
\draw [-] (g2p0id) -- (g2p02);
\node [align=center] (g2p0two1) at (2.00, 3.90) {$\bullet$};
\node [align=center] (g2p0two121212) at (2.00, 5.20) {$\bullet$};
\node [align=center] (g2p0twoid) at (2.00, 1.30) {$\bullet$};
\draw [-] (g2p0two1) -- (g2p0two121212);
\draw [-] (g2p0twoid) -- (g2p0two1);
\node [align=center] (g2p2212) at (5.50, 3.90) {$212$ \\ $1212$ \\ $21212$};
\node [align=center] (g2p21) at (3.50, 2.60) {$1$ \\ $21$};
\node [align=center] (g2p2id) at (4.50, 1.30) {$id$};
\node [align=center] (g2p22) at (5.50, 2.60) {$2$ \\ $12$};
\node [align=center] (g2p2121) at (3.50, 3.90) {$121$ \\ $2121$ \\ $12121$};
\node [align=center] (g2p2121212) at (4.50, 5.20) {$121212$};
\draw [-] (g2p2121) -- (g2p2121212);
\draw [-] (g2p21) -- (g2p2121);
\draw [-] (g2p2212) -- (g2p2121212);
\draw [-] (g2p22) -- (g2p2212);
\draw [-] (g2p2id) -- (g2p21);
\draw [-] (g2p2id) -- (g2p22);
\node [align=center] (g2p2two121) at (6.20, 3.90) {$\bullet$};
\node [align=center] (g2p2two121212) at (6.20, 5.20) {$\bullet$};
\node [align=center] (g2p2two1) at (6.20, 2.60) {$\bullet$};
\node [align=center] (g2p2twoid) at (6.20, 1.30) {$\bullet$};
\draw [-] (g2p2two121) -- (g2p2two121212);
\draw [-] (g2p2two1) -- (g2p2two121);
\draw [-] (g2p2twoid) -- (g2p2two1);
\node [align=center] (g2p3id) at (9.00, 1.30) {$id$};
\node [align=center] (g2p321) at (8.00, 3.90) {$(21, 2121)$ \\ $(121, 12121)$};
\node [align=center] (g2p31) at (8.00, 1.76) {$1$};
\node [align=center] (g2p3121212) at (9.00, 5.20) {$121212$};
\node [align=center] (g2p32) at (10.00, 3.90) {$(2, 21212)$ \\ $(12, 1212)$ \\ $212$};
\draw [-] (g2p321) -- (g2p3121212);
\draw [-] (g2p31) -- (g2p321);
\draw [-] (g2p32) -- (g2p3121212);
\draw [-] (g2p3id) -- (g2p32);
\draw [-] (g2p3id) -- (g2p31);
\node [align=center] (g2p3two2) at (11.00, 3.90) {$\bullet$};
\node [align=center] (g2p3two121212) at (11.00, 5.20) {$\bullet$};
\node [align=center] (g2p3two1) at (11.00, 1.76) {$\bullet$};
\node [align=center] (g2p3twoid) at (11.00, 1.30) {$\bullet$};
\draw [-] (g2p3two2) -- (g2p3two121212);
\draw [-] (g2p3two1) -- (g2p3two2);
\draw [-] (g2p3twoid) -- (g2p3two1);
\node at (-2.75, 1.3) {$(6, +)$};
\node at (-2.75, 3.9000000000000004) {$(0, -)$};
\node at (-2.75, 5.2) {$(-6, +)$};
\node at (-2.75, 2.6) {$(0, +)$};
\node at (-2.75, 1.7550000000000001) {$(0, -)$};
\node at (-2.75, 5.8500000000000005) {$(\mathbf{x}, (-1)^{\mathbf{c}})$};
\node at (0, 5.8500000000000005) {$G_2$, $p = 0$};
\node at (4.5, 5.8500000000000005) {$G_2$, $p = 2$};
\node at (9, 5.8500000000000005) {$G_2$, $p = 3$};
\end{tikzpicture}
\caption{Left and two-sided $0$, $2$, and $3$-cells in type $C_2$.}
\end{figure}

\begin{landscape}
\subsection{Rank 3}

The only interesting types in rank 3 are $B_3$ and $C_3$, each with $p = 2$ an interesting prime. Since their underlying Coxeter systems are isomorphic, their canonical bases and hence
$0$-cells are identical. Below we show the Hasse diagrams for the left $0$-cells and $2$-cells for each type.

Each left $0$-cell is written as $[w]$, where $w$ is the shortest element of the cell which is least lexicographically.
Left $2$-cells are written as $[w]_{2, C}$ or $[w]_{2, B}$ when they differ (as a set) from any of the $0$-cells, otherwise the labelling for the $0$-cell is re-used.
Cells on which $\operatorname{Schu}_L^p$ act nontrivially are marked with an asterisk.

\begin{figure}[h!]
\centering
\hspace{-24pt}\begin{tikzpicture}[scale=0.99,every node/.style={scale=0.85}]
\node [align=center] (c3p012132) at (0.00, 3.90) {$[12132]$};
\node [align=center] (c3p0132) at (0.00, 2.60) {$[132]$};
\node [align=center] (c3p023213) at (1.93, 5.85) {$[23213]^*$};
\node [align=center] (c3p013) at (1.10, 2.60) {$[13]$};
\node [align=center] (c3p0121321323) at (0.00, 7.15) {$[121321323]$};
\node [align=center] (c3p02) at (0.00, 1.30) {$[2]^*$};
\node [align=center] (c3p0id) at (0.00, -0.65) {$[id]$};
\node [align=center] (c3p01323) at (2.20, 2.60) {$[1323]$};
\node [align=center] (c3p0121) at (-1.65, 3.90) {$[121]$};
\node [align=center] (c3p01213) at (2.75, 3.90) {$[1213]$};
\node [align=center] (c3p01) at (-1.65, 1.30) {$[1]^*$};
\node [align=center] (c3p02323) at (0.55, 5.85) {$[2323]^*$};
\node [align=center] (c3p0121321) at (-1.10, 5.85) {$[121321]$};
\node [align=center] (c3p03) at (1.65, 1.30) {$[3]$};
\draw [-] (c3p0121321) -- (c3p0121321323);
\draw [-] (c3p0121) -- (c3p0121321);
\draw [-] (c3p023213) -- (c3p0121321323);
\draw [-] (c3p01213) -- (c3p023213);
\draw [-] (c3p013) -- (c3p01213);
\draw [-] (c3p01) -- (c3p0121);
\draw [-] (c3p01) -- (c3p013);
\draw [-] (c3p02323) -- (c3p0121321323);
\draw [-] (c3p012132) -- (c3p02323);
\draw [-] (c3p012132) -- (c3p0121321);
\draw [-] (c3p0132) -- (c3p0121);
\draw [-] (c3p0132) -- (c3p012132);
\draw [-] (c3p02) -- (c3p0132);
\draw [-] (c3p01323) -- (c3p02323);
\draw [-] (c3p01323) -- (c3p01213);
\draw [-] (c3p03) -- (c3p01323);
\draw [-] (c3p03) -- (c3p013);
\draw [-] (c3p0id) -- (c3p02);
\draw [-] (c3p0id) -- (c3p01);
\draw [-] (c3p0id) -- (c3p03);
\node [align=center] (c3p0two2323) at (3.74, 5.85) {$\bullet$};
\node [align=center] (c3p0two121321323) at (3.74, 7.15) {$\bullet$};
\node [align=center] (c3p0two121) at (3.74, 3.90) {$\bullet$};
\node [align=center] (c3p0two13) at (3.74, 2.60) {$\bullet$};
\node [align=center] (c3p0two1) at (3.74, 1.30) {$\bullet$};
\node [align=center] (c3p0twoid) at (3.74, -0.65) {$\bullet$};
\draw [-] (c3p0two2323) -- (c3p0two121321323);
\draw [-] (c3p0two121) -- (c3p0two2323);
\draw [-] (c3p0two13) -- (c3p0two121);
\draw [-] (c3p0two1) -- (c3p0two13);
\draw [-] (c3p0twoid) -- (c3p0two1);
\node [align=center] (c3p21321) at (5.50, 3.90) {$[1321]_{2,C}$};
\node [align=center] (c3p232) at (7.70, 1.30) {$[32]_{2,C}$};
\node [align=center] (c3p212132) at (7.70, 3.90) {$[12132]$};
\node [align=center] (c3p2132) at (7.70, 2.60) {$[132]$};
\node [align=center] (c3p2121) at (6.60, 3.25) {$[121]_{2,C}$};
\node [align=center] (c3p223213) at (9.62, 5.85) {$[23213]^*$};
\node [align=center] (c3p213) at (8.80, 2.60) {$[13]$};
\node [align=center] (c3p2121321323) at (7.70, 7.15) {$[121321323]$};
\node [align=center] (c3p21) at (5.50, 0.33) {$[1]_{2,C}$};
\node [align=center] (c3p2id) at (7.70, -0.65) {$[id]$};
\node [align=center] (c3p2321) at (5.50, 1.30) {$[321]_{2,C}$};
\node [align=center] (c3p21323) at (9.90, 2.60) {$[1323]$};
\node [align=center] (c3p21213) at (10.45, 3.90) {$[1213]$};
\node [align=center] (c3p22) at (7.70, 0.33) {$[2]_{2,C}$};
\node [align=center] (c3p22323) at (8.25, 5.85) {$[2323]^*$};
\node [align=center] (c3p2232132) at (6.60, 5.85) {$[232132]_{2,C}$};
\node [align=center] (c3p23) at (9.35, 1.30) {$[3]$};
\draw [-] (c3p2232132) -- (c3p2121321323);
\draw [-] (c3p21321) -- (c3p2232132);
\draw [-] (c3p223213) -- (c3p2121321323);
\draw [-] (c3p21213) -- (c3p223213);
\draw [-] (c3p213) -- (c3p21213);
\draw [-] (c3p2321) -- (c3p213);
\draw [-] (c3p2321) -- (c3p21321);
\draw [-] (c3p2121) -- (c3p21321);
\draw [-] (c3p21) -- (c3p2321);
\draw [-] (c3p21) -- (c3p2121);
\draw [-] (c3p22323) -- (c3p2121321323);
\draw [-] (c3p212132) -- (c3p22323);
\draw [-] (c3p212132) -- (c3p2232132);
\draw [-] (c3p2132) -- (c3p212132);
\draw [-] (c3p232) -- (c3p2132);
\draw [-] (c3p22) -- (c3p232);
\draw [-] (c3p22) -- (c3p2121);
\draw [-] (c3p21323) -- (c3p22323);
\draw [-] (c3p21323) -- (c3p21213);
\draw [-] (c3p23) -- (c3p21323);
\draw [-] (c3p23) -- (c3p213);
\draw [-] (c3p2id) -- (c3p21);
\draw [-] (c3p2id) -- (c3p22);
\draw [-] (c3p2id) -- (c3p23);
\node [align=center] (c3p2two2323) at (11.44, 5.85) {$\bullet$};
\node [align=center] (c3p2two121321323) at (11.44, 7.15) {$\bullet$};
\node [align=center] (c3p2two1213) at (11.44, 3.90) {$\bullet$};
\node [align=center] (c3p2two13) at (11.77, 2.60) {$\bullet$};
\node [align=center] (c3p2two121) at (11.11, 3.25) {$\bullet$};
\node [align=center] (c3p2two3) at (11.77, 1.30) {$\bullet$};
\node [align=center] (c3p2two1) at (11.44, 0.33) {$\bullet$};
\node [align=center] (c3p2twoid) at (11.44, -0.65) {$\bullet$};
\draw [-] (c3p2two2323) -- (c3p2two121321323);
\draw [-] (c3p2two1213) -- (c3p2two2323);
\draw [-] (c3p2two13) -- (c3p2two1213);
\draw [-] (c3p2two121) -- (c3p2two1213);
\draw [-] (c3p2two3) -- (c3p2two13);
\draw [-] (c3p2two1) -- (c3p2two3);
\draw [-] (c3p2two1) -- (c3p2two121);
\draw [-] (c3p2twoid) -- (c3p2two1);
\node [align=center] (b3p22323) at (15.73, 4.88) {$[2323]_{2,B}$};
\node [align=center] (b3p212132) at (14.85, 3.90) {$[12132]$};
\node [align=center] (b3p2132) at (14.85, 2.60) {$[132]$};
\node [align=center] (b3p213) at (15.95, 2.60) {$[13]$};
\node [align=center] (b3p2121321323) at (14.85, 7.15) {$[121321323]$};
\node [align=center] (b3p22) at (14.85, 1.30) {$[2]^*$};
\node [align=center] (b3p2id) at (14.85, -0.65) {$[id]$};
\node [align=center] (b3p2121323) at (15.40, 5.85) {$[121323]_{2,B}$};
\node [align=center] (b3p21213213) at (17.60, 5.85) {$[1213213]_{2,B}$};
\node [align=center] (b3p21323) at (17.05, 2.60) {$[1323]$};
\node [align=center] (b3p2121) at (13.20, 3.90) {$[121]$};
\node [align=center] (b3p223) at (16.78, 1.30) {$[23]_{2,B}$};
\node [align=center] (b3p21213) at (17.60, 3.90) {$[1213]$};
\node [align=center] (b3p21) at (13.20, 1.30) {$[1]^*$};
\node [align=center] (b3p223213) at (17.05, 4.88) {$[23213]_{2,B}$};
\node [align=center] (b3p23) at (15.95, -0.20) {$[3]_{2,B}$};
\node [align=center] (b3p2121321) at (13.75, 5.85) {$[121321]$};
\draw [-] (b3p2121321) -- (b3p2121321323);
\draw [-] (b3p2121) -- (b3p2121321);
\draw [-] (b3p21213213) -- (b3p2121321323);
\draw [-] (b3p21213) -- (b3p21213213);
\draw [-] (b3p223213) -- (b3p21213213);
\draw [-] (b3p213) -- (b3p21213);
\draw [-] (b3p213) -- (b3p223213);
\draw [-] (b3p21) -- (b3p2121);
\draw [-] (b3p21) -- (b3p213);
\draw [-] (b3p2121323) -- (b3p2121321323);
\draw [-] (b3p22323) -- (b3p2121323);
\draw [-] (b3p212132) -- (b3p2121321);
\draw [-] (b3p212132) -- (b3p2121323);
\draw [-] (b3p2132) -- (b3p2121);
\draw [-] (b3p2132) -- (b3p22323);
\draw [-] (b3p2132) -- (b3p212132);
\draw [-] (b3p22) -- (b3p2132);
\draw [-] (b3p21323) -- (b3p22323);
\draw [-] (b3p21323) -- (b3p21213);
\draw [-] (b3p223) -- (b3p21323);
\draw [-] (b3p23) -- (b3p213);
\draw [-] (b3p23) -- (b3p223);
\draw [-] (b3p2id) -- (b3p23);
\draw [-] (b3p2id) -- (b3p22);
\draw [-] (b3p2id) -- (b3p21);
\node [align=center] (b3p2two121321) at (18.59, 5.85) {$\bullet$};
\node [align=center] (b3p2two121321323) at (18.59, 7.15) {$\bullet$};
\node [align=center] (b3p2two121) at (18.26, 3.90) {$\bullet$};
\node [align=center] (b3p2two2323) at (18.92, 4.88) {$\bullet$};
\node [align=center] (b3p2two13) at (18.59, 2.60) {$\bullet$};
\node [align=center] (b3p2two1) at (18.59, 1.30) {$\bullet$};
\node [align=center] (b3p2two3) at (18.59, -0.20) {$\bullet$};
\node [align=center] (b3p2twoid) at (18.59, -0.65) {$\bullet$};
\draw [-] (b3p2two121321) -- (b3p2two121321323);
\draw [-] (b3p2two121) -- (b3p2two121321);
\draw [-] (b3p2two2323) -- (b3p2two121321);
\draw [-] (b3p2two13) -- (b3p2two121);
\draw [-] (b3p2two13) -- (b3p2two2323);
\draw [-] (b3p2two1) -- (b3p2two13);
\draw [-] (b3p2two3) -- (b3p2two1);
\draw [-] (b3p2twoid) -- (b3p2two3);
\node at (-3.0250000000000004, -0.65) {$(9, -)$};
\node at (-3.0250000000000004, 1.3) {$(3, +)$};
\node at (-3.0250000000000004, 2.6) {$(1, -)$};
\node at (-3.0250000000000004, 3.9000000000000004) {$(-1, +)$};
\node at (-3.0250000000000004, 5.8500000000000005) {$(-3, -)$};
\node at (-3.0250000000000004, 7.15) {$(-9, +)$};
\node at (-3.0250000000000004, 0.325) {$(3, -)$};
\node at (-3.0250000000000004, 3.25) {$(-3, -)$};
\node at (-3.0250000000000004, -0.195) {$(3, +)$};
\node at (-3.0250000000000004, 4.875) {$(-3, +)$};
\node at (-3.0250000000000004, 7.800000000000001) {$(\mathbf{x}, (-1)^{\mathbf{c}})$};
\node at (0.0, 7.800000000000001) {$C_3$ or $B_3$, $p = 0$};
\node at (7.700000000000001, 7.800000000000001) {$C_3$, $p = 2$};
\node at (14.850000000000001, 7.800000000000001) {$B_3$, $p = 2$};
\end{tikzpicture}
\caption{
	Left and two-sided $0$-cells and $2$-cells for types $C_3$ and $B_3$.
}
\end{figure}


In the above, there are some $0$-cells which are a union of $2$-cells, for example in type $C_3$ we have $[1] = [1]_{2, C} \sqcup [321]_{2, C}$ and $[2] = [2]_{2, C} \sqcup [32]_{2, C}$, while in type $B_3$ all $0$-cells are unions of $2$-cells.
However, in type $C_3$ we also have $[121] \sqcup [121321] = [121]_{C, 2} \sqcup [1321]_{C, 2} \sqcup [232132]_{C, 2}$, and no union of cells on the right is a cell on the left.
We also have that $132 <_L 121$ in the $0$-left-cell ordering on $C_3$, but $132 \not <_L 121$ in the $2$-left-cell ordering.
Finally, we can see that on the chain $[121]_{2, C} < [1321]_{2, C} < [232132]_{2, C}$ the eigenvalue $\mathbf{x}^2(-)$ takes values $-3, -1, -3$, meaning that we cannot expect that the implication $\lambda < \mu \implies \mathbf{x}^p(\lambda) < \mathbf{x}^p(\mu)$ will hold for $p$-cells, as it does for $0$-cells.

\begin{table}[h!]
\begin{tabular}{| r | r r l | r | r r l | }
\hline
 & $\mathbf{x}$ & $\mathbf{c}$ & $[w]$ & & $\mathbf{x}^2$ & $\mathbf{c}^2$ & $[w]_2$ \\
\hline
$[id]$ & $9$ & $-$ & id & &&& \\
\hline
$[1]^*$ & $3$ & $+$ & (1, 12321), (21, 2321), 321 & $[1]_{2,C}$ & $3$ & $-$ & 1, 21 \\
&&& & $[321]_{2,C}$ & $3$ & $+$ & 321, 2321, 12321 \\
\hline
$[2]^*$ & $3$ & $+$ & (2, 232), (12, 1232), 32 & $[2]_{2,C}$ & $3$ & $-$ & 2, 12 \\
&&& & $[32]_{2,C}$ & $3$ & $+$ & 32, 232, 1232 \\
\hline
$[3]$ & $3$ & $+$ & 3, 23, 123, 323 & $[3]_{2,B}$ & $3$ & $+$ & 3 \\
&&& & $[23]_{2,B}$ & $3$ & $+$ & 23, 123, 323 \\
\hline
$[13]$ & $1$ & $-$ & 13, 213, 3213 & &&& \\
\hline
$[121]$ & $-1$ & $+$ & 121, 1321, 21321 & $[121]_{2,C}$ & $-3$ & $-$ & 121 \\
$[121321]$ & $-3$ & $-$ & 121321, 232132, 1232132, 12132132 & $[1321]_{2,C}$ & $-1$ & $+$ & 1321, 21321, 121321 \\
&&& & $[232132]_{2,C}$ & $-3$ & $-$ & 232132, 1232132, 12132132 \\
\hline
$[132]$ & $1$ & $-$ & 132, 2132, 32132 & &&& \\
\hline
$[1213]$ & $-1$ & $+$ & 1213, 13213, 213213 & &&& \\
\hline
$[1323]$ & $1$ & $-$ & 1323, 21323, 321323 & &&& \\
\hline
$[2323]^*$ & $-3$ & $-$ & (2323, 21321323), (12323, 1321323), 121323 & $[2323]_{2,B}$ & $-3$ & $+$ & 2323, 12323 \\
&&& & $[121323]_{2,B}$ & $-3$ & $-$ & 121323, 1321323, 21321323 \\
\hline
$[12132]$ & $-1$ & $+$ & 12132, 132132, 2132132 & &&& \\
\hline
$[23213]^*$ & $-3$ & $-$ & (23213, 2321323), (123213, 12321323), 1213213 & $[23213]_{2,B}$ & $-3$ & $+$ & 23213, 123213 \\
&&& & $[1213213]_{2,B}$ & $-3$ & $-$ & 1213213, 2321323, 12321323 \\
\hline
$[121321323]$ & $-9$ & $+$ & 121321323 & &&& \\
\hline
\end{tabular}

\caption{ Contents of the left $0$-cells and $2$-cells of $C_3$ and $B_3$. Rows have been grouped together whenever a $2$-cell intersects a $0$-cell. When a $2$-cell and $0$-cell have the same elements and Schutzenberger involution, the content of the $2$-cell is omitted. For example, $1$ is still mapped to $12321$ under $\operatorname{Schu}_L^2$ in type $B_3$.
}
\end{table}


\end{landscape}

\subsection{Rank 4}

For the rank 4 types $B_4$, $C_4$, $D_4$, and $F_4$ we only show the two-sided cells.
Since words in the generators get quite long, we write $\overline{w} = w_0 w$ for a Coxeter group element, and when choosing representatives for a two-sided cell $\lambda$, we will either have $\lambda = [\![ w ]\!]$ where $w \in \lambda$ is the least (in short-lex order) element belonging to $\lambda$, or $\lambda = [\![ \overline{w} ]\!]$, where $\overline{w}$ is the greatest (in short-lex order) element belonging to $\lambda$.
When writing down a cell, we choose whichever representative will give us the shortest word.
Again we mark cells on which $\operatorname{Schu}_L^p$ acts nontrivially with an asterisk, and only label $p$-cells with a subscript $p$ when those cells do not appear as $0$-cells.

\begin{figure}[h!]
\label{fig:rank4-BC}
\centering
\begin{tikzpicture}[every node/.style={scale=0.85}]
\node [align=center] (c4p0232432434) at (0.00, 9.38) {$[\![\overline{1}]\!]^*$};
\node [align=center] (c4p01213214321432434) at (0.00, 10.50) {$[\![\overline{id}]\!]$};
\node [align=center] (c4p0121321) at (0.00, 8.25) {$[\![\overline{13}]\!]^*$};
\node [align=center] (c4p013434) at (0.00, 7.50) {$[\![\overline{121}]\!]$};
\node [align=center] (c4p01214) at (-0.80, 6.00) {$[\![1214]\!]$};
\node [align=center] (c4p03434) at (0.80, 6.00) {$[\![3434]\!]$};
\node [align=center] (c4p0121) at (0.00, 4.20) {$[\![121]\!]$};
\node [align=center] (c4p013) at (0.00, 3.00) {$[\![13]\!]^*$};
\node [align=center] (c4p01) at (0.00, 1.50) {$[\![1]\!]^*$};
\node [align=center] (c4p0id) at (0.00, 0.00) {$[\![id]\!]$};
\draw [-] (c4p0232432434) -- (c4p01213214321432434);
\draw [-] (c4p0121321) -- (c4p0232432434);
\draw [-] (c4p013434) -- (c4p0121321);
\draw [-] (c4p01214) -- (c4p013434);
\draw [-] (c4p03434) -- (c4p013434);
\draw [-] (c4p0121) -- (c4p01214);
\draw [-] (c4p0121) -- (c4p03434);
\draw [-] (c4p013) -- (c4p0121);
\draw [-] (c4p01) -- (c4p013);
\draw [-] (c4p0id) -- (c4p01);
\node [align=center] (c4p2232432434) at (4.80, 9.38) {$[\![\overline{1}]\!]^*_{2,C}$};
\node [align=center] (c4p21213214321432434) at (4.80, 10.50) {$[\![\overline{id}]\!]$};
\node [align=center] (c4p2121321434) at (4.80, 8.25) {$[\![\overline{13}]\!]_{2,C}$};
\node [align=center] (c4p213434) at (5.20, 7.50) {$[\![\overline{121}]\!]$};
\node [align=center] (c4p21213214) at (3.60, 6.75) {$[\![\overline{1434}]\!]_{2,C}$};
\node [align=center] (c4p21214) at (4.40, 6.00) {$[\![1214]\!]$};
\node [align=center] (c4p23434) at (6.00, 6.00) {$[\![3434]\!]_{2,C}$};
\node [align=center] (c4p22324) at (5.20, 4.20) {$[\![2324]\!]_{2,C}$};
\node [align=center] (c4p214) at (5.20, 3.00) {$[\![14]\!]^*_{2,C}$};
\node [align=center] (c4p24) at (5.20, 1.50) {$[\![4]\!]^*_{2,C}$};
\node [align=center] (c4p2121321) at (3.60, 5.25) {$[\![121321]\!]_{2,C}$};
\node [align=center] (c4p2121) at (3.60, 3.75) {$[\![121]\!]_{2,C}$};
\node [align=center] (c4p213) at (3.60, 2.62) {$[\![13]\!]_{2,C}$};
\node [align=center] (c4p21) at (4.80, 0.75) {$[\![1]\!]_{2,C}$};
\node [align=center] (c4p2id) at (4.80, 0.00) {$[\![id]\!]$};
\draw [-] (c4p2232432434) -- (c4p21213214321432434);
\draw [-] (c4p2121321434) -- (c4p2232432434);
\draw [-] (c4p213434) -- (c4p2121321434);
\draw [-] (c4p21213214) -- (c4p2121321434);
\draw [-] (c4p21214) -- (c4p21213214);
\draw [-] (c4p21214) -- (c4p213434);
\draw [-] (c4p23434) -- (c4p213434);
\draw [-] (c4p22324) -- (c4p21214);
\draw [-] (c4p22324) -- (c4p23434);
\draw [-] (c4p214) -- (c4p22324);
\draw [-] (c4p24) -- (c4p214);
\draw [-] (c4p2121321) -- (c4p21213214);
\draw [-] (c4p2121) -- (c4p2121321);
\draw [-] (c4p2121) -- (c4p22324);
\draw [-] (c4p213) -- (c4p214);
\draw [-] (c4p213) -- (c4p2121);
\draw [-] (c4p21) -- (c4p213);
\draw [-] (c4p21) -- (c4p24);
\draw [-] (c4p2id) -- (c4p21);
\node [align=center] (b4p2121321432434) at (9.60, 9.38) {$[\![\overline{1}]\!]^*_{2,B}$};
\node [align=center] (b4p21213214321432434) at (9.60, 10.50) {$[\![\overline{id}]\!]$};
\node [align=center] (b4p2121321) at (8.80, 8.25) {$[\![\overline{13}]\!]^*_{2,B}$};
\node [align=center] (b4p2232432434) at (10.40, 8.70) {$[\![\overline{343}]\!]_{2,B}$};
\node [align=center] (b4p21213434) at (9.60, 7.50) {$[\![\overline{121}]\!]_{2,B}$};
\node [align=center] (b4p213434) at (9.60, 4.65) {$[\![13434]\!]_{2,B}$};
\node [align=center] (b4p21214) at (8.00, 6.00) {$[\![1214]\!]$};
\node [align=center] (b4p2232432) at (11.20, 6.00) {$[\![\overline{3434}]\!]_{2,B}$};
\node [align=center] (b4p2121) at (8.00, 4.20) {$[\![121]\!]$};
\node [align=center] (b4p213) at (8.80, 3.00) {$[\![13]\!]_{2,B}$};
\node [align=center] (b4p23434) at (10.40, 3.75) {$[\![3434]\!]_{2,B}$};
\node [align=center] (b4p214) at (9.60, 2.17) {$[\![14]\!]_{2,B}$};
\node [align=center] (b4p21) at (9.60, 1.50) {$[\![1]\!]^*_{2,B}$};
\node [align=center] (b4p24) at (9.60, 0.75) {$[\![4]\!]_{2,B}$};
\node [align=center] (b4p2id) at (9.60, 0.00) {$[\![id]\!]$};
\draw [-] (b4p2121321432434) -- (b4p21213214321432434);
\draw [-] (b4p2121321) -- (b4p2121321432434);
\draw [-] (b4p2232432434) -- (b4p2121321432434);
\draw [-] (b4p21213434) -- (b4p2232432434);
\draw [-] (b4p21213434) -- (b4p2121321);
\draw [-] (b4p213434) -- (b4p21213434);
\draw [-] (b4p21214) -- (b4p21213434);
\draw [-] (b4p2232432) -- (b4p21213434);
\draw [-] (b4p2121) -- (b4p2232432);
\draw [-] (b4p2121) -- (b4p21214);
\draw [-] (b4p213) -- (b4p2121);
\draw [-] (b4p213) -- (b4p213434);
\draw [-] (b4p23434) -- (b4p2232432);
\draw [-] (b4p23434) -- (b4p213434);
\draw [-] (b4p214) -- (b4p23434);
\draw [-] (b4p214) -- (b4p213);
\draw [-] (b4p21) -- (b4p214);
\draw [-] (b4p24) -- (b4p21);
\draw [-] (b4p2id) -- (b4p24);
\node at (-2.2, 0.0) {$(16, +)$};
\node at (-2.2, 1.5) {$(8, -)$};
\node at (-2.2, 3.0) {$(4, +)$};
\node at (-2.2, 4.199999999999999) {$(2, -)$};
\node at (-2.2, 6.0) {$(0, +)$};
\node at (-2.2, 7.5) {$(-2, -)$};
\node at (-2.2, 8.25) {$(-4, +)$};
\node at (-2.2, 9.375) {$(-8, -)$};
\node at (-2.2, 10.5) {$(-16, +)$};
\node at (-2.2, 0.75) {$(8, +)$};
\node at (-2.2, 2.625) {$(4, +)$};
\node at (-2.2, 3.75) {$(0, +)$};
\node at (-2.2, 5.25) {$(-8, +)$};
\node at (-2.2, 6.75) {$(-4, -)$};
\node at (-2.2, 2.175) {$(4, -)$};
\node at (-2.2, 4.65) {$(-4, +)$};
\node at (-2.2, 8.7) {$(-8, +)$};
\node at (-2.2, 11.25) {$(\mathbf{x}, (-1)^{\mathbf{c}})$};
\node at (0.0, 11.25) {$C_4$ or $B_4$, $p = 0$};
\node at (4.800000000000001, 11.25) {$C_4$, $p = 2$};
\node at (9.600000000000001, 11.25) {$B_4$, $p = 2$};
\end{tikzpicture}
\caption{Two-sided $0$-cells and $2$-cells in types $C_4$ and $B_4$.}
\end{figure}
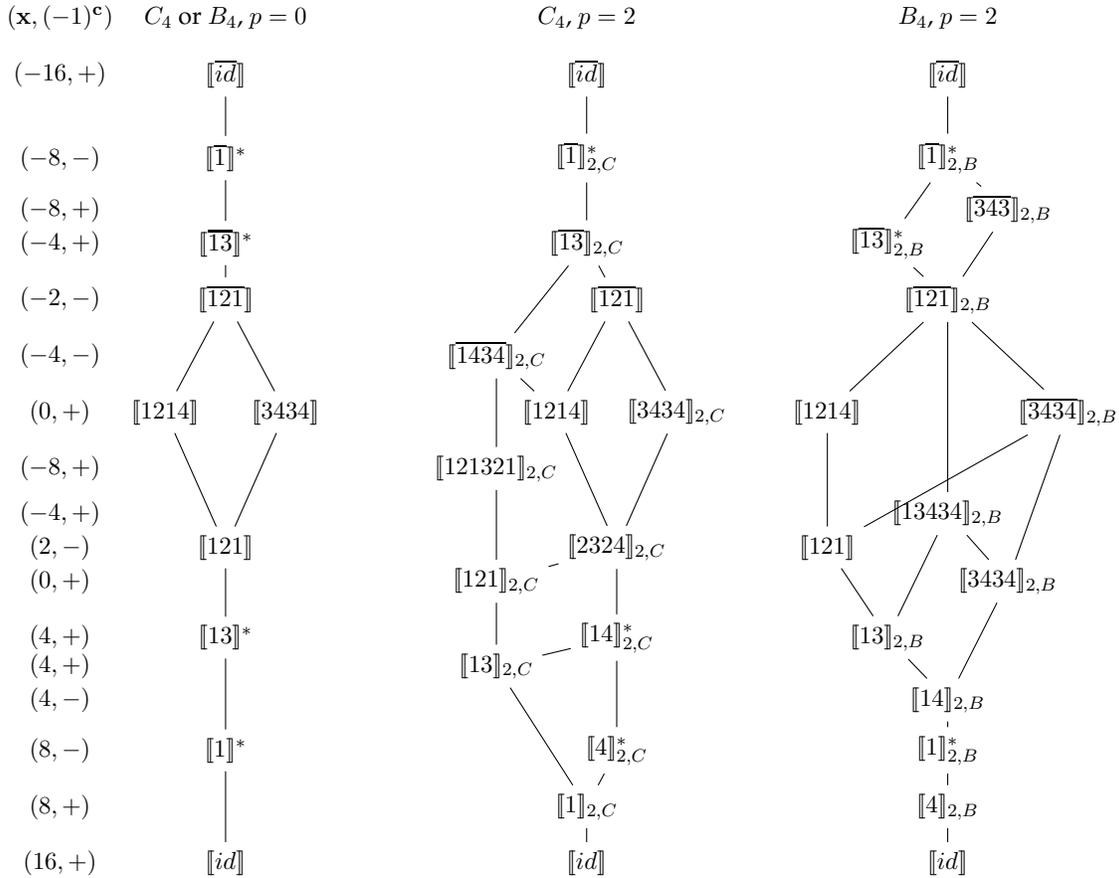

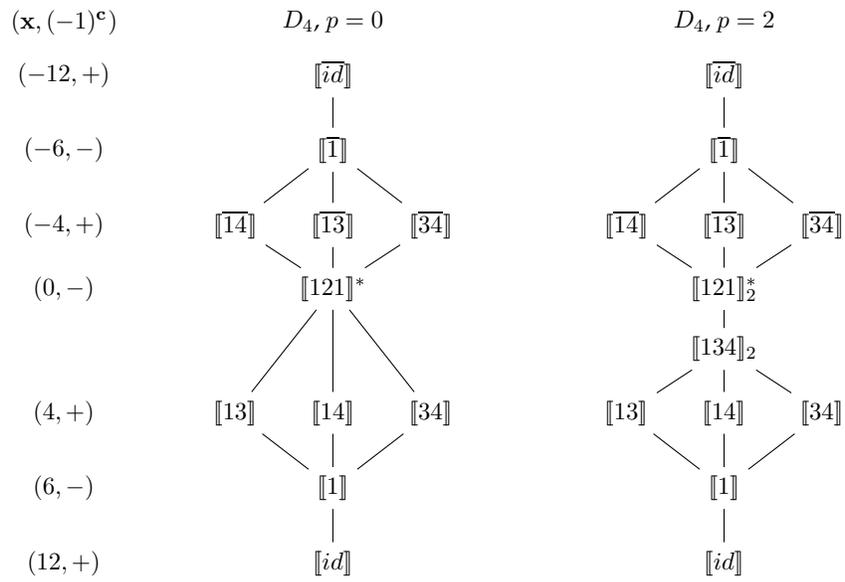
\begin{figure}[h!]
\label{fig:rank4-D}
\centering
\begin{tikzpicture}[every node/.style={scale=0.85}]
\node [align=center] (d4p0121321421) at (0.00, 5.50) {$[\![\overline{1}]\!]$};
\node [align=center] (d4p0121321421324) at (0.00, 6.50) {$[\![\overline{id}]\!]$};
\node [align=center] (d4p0121421) at (-1.30, 4.50) {$[\![\overline{14}]\!]$};
\node [align=center] (d4p0232423) at (1.30, 4.50) {$[\![\overline{34}]\!]$};
\node [align=center] (d4p0121321) at (0.00, 4.50) {$[\![\overline{13}]\!]$};
\node [align=center] (d4p0121) at (0.00, 3.65) {$[\![121]\!]^*$};
\node [align=center] (d4p014) at (0.00, 2.00) {$[\![14]\!]$};
\node [align=center] (d4p034) at (1.30, 2.00) {$[\![34]\!]$};
\node [align=center] (d4p013) at (-1.30, 2.00) {$[\![13]\!]$};
\node [align=center] (d4p01) at (0.00, 1.00) {$[\![1]\!]$};
\node [align=center] (d4p0id) at (0.00, 0.00) {$[\![id]\!]$};
\draw [-] (d4p0121321421) -- (d4p0121321421324);
\draw [-] (d4p0121421) -- (d4p0121321421);
\draw [-] (d4p0232423) -- (d4p0121321421);
\draw [-] (d4p0121321) -- (d4p0121321421);
\draw [-] (d4p0121) -- (d4p0121321);
\draw [-] (d4p0121) -- (d4p0232423);
\draw [-] (d4p0121) -- (d4p0121421);
\draw [-] (d4p014) -- (d4p0121);
\draw [-] (d4p034) -- (d4p0121);
\draw [-] (d4p013) -- (d4p0121);
\draw [-] (d4p01) -- (d4p014);
\draw [-] (d4p01) -- (d4p034);
\draw [-] (d4p01) -- (d4p013);
\draw [-] (d4p0id) -- (d4p01);
\node [align=center] (d4p2121321421) at (5.20, 5.50) {$[\![\overline{1}]\!]$};
\node [align=center] (d4p2121321421324) at (5.20, 6.50) {$[\![\overline{id}]\!]$};
\node [align=center] (d4p2121421) at (3.90, 4.50) {$[\![\overline{14}]\!]$};
\node [align=center] (d4p2232423) at (6.50, 4.50) {$[\![\overline{34}]\!]$};
\node [align=center] (d4p2121321) at (5.20, 4.50) {$[\![\overline{13}]\!]$};
\node [align=center] (d4p2121) at (5.20, 3.65) {$[\![121]\!]^*_2$};
\node [align=center] (d4p2134) at (5.20, 2.85) {$[\![134]\!]_2$};
\node [align=center] (d4p214) at (5.20, 2.00) {$[\![14]\!]$};
\node [align=center] (d4p234) at (6.50, 2.00) {$[\![34]\!]$};
\node [align=center] (d4p213) at (3.90, 2.00) {$[\![13]\!]$};
\node [align=center] (d4p21) at (5.20, 1.00) {$[\![1]\!]$};
\node [align=center] (d4p2id) at (5.20, 0.00) {$[\![id]\!]$};
\draw [-] (d4p2121321421) -- (d4p2121321421324);
\draw [-] (d4p2121421) -- (d4p2121321421);
\draw [-] (d4p2232423) -- (d4p2121321421);
\draw [-] (d4p2121321) -- (d4p2121321421);
\draw [-] (d4p2121) -- (d4p2121321);
\draw [-] (d4p2121) -- (d4p2232423);
\draw [-] (d4p2121) -- (d4p2121421);
\draw [-] (d4p2134) -- (d4p2121);
\draw [-] (d4p214) -- (d4p2134);
\draw [-] (d4p234) -- (d4p2134);
\draw [-] (d4p213) -- (d4p2134);
\draw [-] (d4p21) -- (d4p213);
\draw [-] (d4p21) -- (d4p234);
\draw [-] (d4p21) -- (d4p214);
\draw [-] (d4p2id) -- (d4p21);
\node at (-3.575, 0) {$(12, +)$};
\node at (-3.575, 1) {$(6, -)$};
\node at (-3.575, 2) {$(4, +)$};
\node at (-3.575, 3.65) {$(0, -)$};
\node at (-3.575, 4.5) {$(-4, +)$};
\node at (-3.575, 5.5) {$(-6, -)$};
\node at (-3.575, 6.5) {$(-12, +)$};
\node at (-3.575, 7.2) {$(\mathbf{x}, (-1)^{\mathbf{c}})$};
\node at (0.0, 7.2) {$D_4$, $p = 0$};
\node at (5.2, 7.2) {$D_4$, $p = 2$};
\end{tikzpicture}
\caption{Two-sided $0$-cells and $2$-cells in type $D_4$.}
\end{figure}

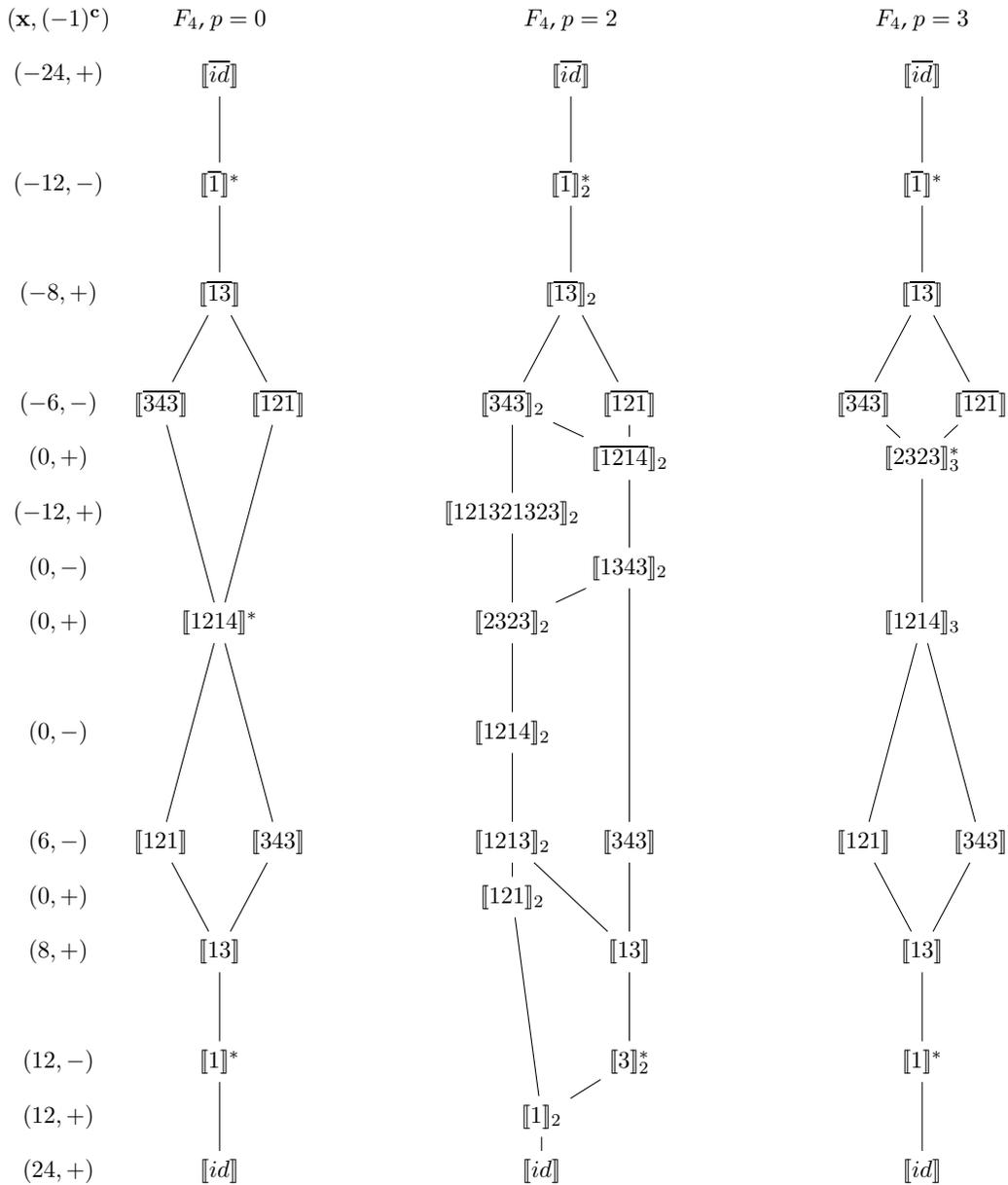
\begin{figure}[h!]
\label{fig:rank4-F}
\centering
\begin{tikzpicture}[every node/.style={scale=0.85}]
\node [align=center] (f4p01213213234321324321) at (0.00, 13.50) {$[\![\overline{1}]\!]^*$};
\node [align=center] (f4p0121321323432132343213234) at (0.00, 15.00) {$[\![\overline{id}]\!]$};
\node [align=center] (f4p012132132343234) at (0.00, 12.00) {$[\![\overline{13}]\!]$};
\node [align=center] (f4p0121321323) at (-0.80, 10.50) {$[\![\overline{343}]\!]$};
\node [align=center] (f4p0232343234) at (0.80, 10.50) {$[\![\overline{121}]\!]$};
\node [align=center] (f4p01214) at (0.00, 7.50) {$[\![1214]\!]^*$};
\node [align=center] (f4p0343) at (0.80, 4.50) {$[\![343]\!]$};
\node [align=center] (f4p0121) at (-0.80, 4.50) {$[\![121]\!]$};
\node [align=center] (f4p013) at (0.00, 3.00) {$[\![13]\!]$};
\node [align=center] (f4p01) at (0.00, 1.50) {$[\![1]\!]^*$};
\node [align=center] (f4p0id) at (0.00, 0.00) {$[\![id]\!]$};
\draw [-] (f4p01213213234321324321) -- (f4p0121321323432132343213234);
\draw [-] (f4p012132132343234) -- (f4p01213213234321324321);
\draw [-] (f4p0121321323) -- (f4p012132132343234);
\draw [-] (f4p0232343234) -- (f4p012132132343234);
\draw [-] (f4p01214) -- (f4p0232343234);
\draw [-] (f4p01214) -- (f4p0121321323);
\draw [-] (f4p0343) -- (f4p01214);
\draw [-] (f4p0121) -- (f4p01214);
\draw [-] (f4p013) -- (f4p0121);
\draw [-] (f4p013) -- (f4p0343);
\draw [-] (f4p01) -- (f4p013);
\draw [-] (f4p0id) -- (f4p01);
\node [align=center] (f4p22323432132343213234) at (4.80, 13.50) {$[\![\overline{1}]\!]^*_2$};
\node [align=center] (f4p2121321323432132343213234) at (4.80, 15.00) {$[\![\overline{id}]\!]$};
\node [align=center] (f4p212132132343234) at (4.80, 12.00) {$[\![\overline{13}]\!]_2$};
\node [align=center] (f4p2232343234) at (5.60, 10.50) {$[\![\overline{121}]\!]$};
\node [align=center] (f4p212132132343) at (4.00, 10.50) {$[\![\overline{343}]\!]_2$};
\node [align=center] (f4p21213432134) at (5.60, 9.75) {$[\![\overline{1214}]\!]_2$};
\node [align=center] (f4p21343) at (5.60, 8.25) {$[\![1343]\!]_2$};
\node [align=center] (f4p2121321323) at (4.00, 9.00) {$[\![121321323]\!]_2$};
\node [align=center] (f4p22323) at (4.00, 7.50) {$[\![2323]\!]_2$};
\node [align=center] (f4p21214) at (4.00, 6.00) {$[\![1214]\!]_2$};
\node [align=center] (f4p21213) at (4.00, 4.50) {$[\![1213]\!]_2$};
\node [align=center] (f4p2343) at (5.60, 4.50) {$[\![343]\!]$};
\node [align=center] (f4p213) at (5.60, 3.00) {$[\![13]\!]$};
\node [align=center] (f4p23) at (5.60, 1.50) {$[\![3]\!]^*_2$};
\node [align=center] (f4p2121) at (4.00, 3.75) {$[\![121]\!]_2$};
\node [align=center] (f4p21) at (4.40, 0.75) {$[\![1]\!]_2$};
\node [align=center] (f4p2id) at (4.40, 0.00) {$[\![id]\!]$};
\draw [-] (f4p22323432132343213234) -- (f4p2121321323432132343213234);
\draw [-] (f4p212132132343234) -- (f4p22323432132343213234);
\draw [-] (f4p2232343234) -- (f4p212132132343234);
\draw [-] (f4p212132132343) -- (f4p212132132343234);
\draw [-] (f4p21213432134) -- (f4p2232343234);
\draw [-] (f4p21213432134) -- (f4p212132132343);
\draw [-] (f4p21343) -- (f4p21213432134);
\draw [-] (f4p2121321323) -- (f4p212132132343);
\draw [-] (f4p22323) -- (f4p21343);
\draw [-] (f4p22323) -- (f4p2121321323);
\draw [-] (f4p21214) -- (f4p22323);
\draw [-] (f4p21213) -- (f4p21214);
\draw [-] (f4p2343) -- (f4p21343);
\draw [-] (f4p213) -- (f4p21213);
\draw [-] (f4p213) -- (f4p2343);
\draw [-] (f4p23) -- (f4p213);
\draw [-] (f4p2121) -- (f4p21213);
\draw [-] (f4p21) -- (f4p23);
\draw [-] (f4p21) -- (f4p2121);
\draw [-] (f4p2id) -- (f4p21);
\node [align=center] (f4p31213213234321324321) at (9.60, 13.50) {$[\![\overline{1}]\!]^*$};
\node [align=center] (f4p3121321323432132343213234) at (9.60, 15.00) {$[\![\overline{id}]\!]$};
\node [align=center] (f4p312132132343234) at (9.60, 12.00) {$[\![\overline{13}]\!]$};
\node [align=center] (f4p3121321323) at (8.80, 10.50) {$[\![\overline{343}]\!]$};
\node [align=center] (f4p3232343234) at (10.40, 10.50) {$[\![\overline{121}]\!]$};
\node [align=center] (f4p32323) at (9.60, 9.75) {$[\![2323]\!]^*_3$};
\node [align=center] (f4p31214) at (9.60, 7.50) {$[\![1214]\!]_3$};
\node [align=center] (f4p3343) at (10.40, 4.50) {$[\![343]\!]$};
\node [align=center] (f4p3121) at (8.80, 4.50) {$[\![121]\!]$};
\node [align=center] (f4p313) at (9.60, 3.00) {$[\![13]\!]$};
\node [align=center] (f4p31) at (9.60, 1.50) {$[\![1]\!]^*$};
\node [align=center] (f4p3id) at (9.60, 0.00) {$[\![id]\!]$};
\draw [-] (f4p31213213234321324321) -- (f4p3121321323432132343213234);
\draw [-] (f4p312132132343234) -- (f4p31213213234321324321);
\draw [-] (f4p3121321323) -- (f4p312132132343234);
\draw [-] (f4p3232343234) -- (f4p312132132343234);
\draw [-] (f4p32323) -- (f4p3232343234);
\draw [-] (f4p32323) -- (f4p3121321323);
\draw [-] (f4p31214) -- (f4p32323);
\draw [-] (f4p3343) -- (f4p31214);
\draw [-] (f4p3121) -- (f4p31214);
\draw [-] (f4p313) -- (f4p3121);
\draw [-] (f4p313) -- (f4p3343);
\draw [-] (f4p31) -- (f4p313);
\draw [-] (f4p3id) -- (f4p31);
\node at (-2.2, 0.0) {$(24, +)$};
\node at (-2.2, 1.5) {$(12, -)$};
\node at (-2.2, 3.0) {$(8, +)$};
\node at (-2.2, 4.5) {$(6, -)$};
\node at (-2.2, 7.5) {$(0, +)$};
\node at (-2.2, 10.5) {$(-6, -)$};
\node at (-2.2, 12.0) {$(-8, +)$};
\node at (-2.2, 13.5) {$(-12, -)$};
\node at (-2.2, 15.0) {$(-24, +)$};
\node at (-2.2, 0.75) {$(12, +)$};
\node at (-2.2, 3.75) {$(0, +)$};
\node at (-2.2, 6.0) {$(0, -)$};
\node at (-2.2, 8.25) {$(0, -)$};
\node at (-2.2, 9.0) {$(-12, +)$};
\node at (-2.2, 9.75) {$(0, +)$};
\node at (-2.2, 15.75) {$(\mathbf{x}, (-1)^{\mathbf{c}})$};
\node at (0.0, 15.75) {$F_4$, $p = 0$};
\node at (4.800000000000001, 15.75) {$F_4$, $p = 2$};
\node at (9.600000000000001, 15.75) {$F_4$, $p = 3$};
\end{tikzpicture}
\caption{Two-sided $0$, $2$, and $3$-cells in type $F_4$.}
\end{figure}
\clearpage

\begin{landscape}
\subsection{Ranks 5 and 6} \label{subsec:5and6}

The conjecture has also been verified and eigenvalues computed in ranks 5 and 6, and we display the high-level statistics here. All interesting primes appear below.

\begin{table}[h!]
\label{table:type-statistics5}
\centering
\makebox[\textwidth][c]{\begin{tabular}{r | r r r | r r}
Type & $C_5$ & $C_5$ & $B_5$ & $D_5$ & $D_5$ \\
$p$ & 0 & 2 & 2 & 0 & 2 \\
\hline
\# Left cells & 182 & 247 & 247 & 126 & 136 \\
\# Two-sided cells & 16 & 26 & 26 & 14 & 16 \\
\# Unique $(\mathbf{x}, \pm)$ pairs & 14 & 20 & 20 & 14 & 16 \\
\# $\operatorname{Schu}_L^p$ fixed points & 1912 & 2876 & 2876 & 540 & 640 \\
\# $\operatorname{Schu}_L^p$ moving points & 1928 & 964 & 964 & 1380 & 1280 \\
\end{tabular}
}
\caption{Statistics on $0$-cells vs $p$-cells for the Cartan types of rank 5.}
\end{table}

\begin{table}[h!]
\label{table:type-statistics6}
\centering
\makebox[\textwidth][c]{\begin{tabular}{r | r r r r r | r r r | r r r r r}
Type & $C_6$ & $C_6$ & $B_6$ & $C_6$ & $B_6$ & $D_6$ & $D_6$ & $D_6$ & $E_6$ & $E_6$ & $E_6$ & $E_6$ & $E_6$ \\
$p$ & 0 & 2 & 2 & 3 & 3 & 0 & 2 & 3 & 0 & 2 & 3 & 5 & 7 \\
\hline
\# Left cells & 752 & 1058 & 1058 & 752 & 752 & 578 & 622 & 578 & 652 & 742 & 662 & 652 & 652 \\
\# Two-sided cells & 26 & 45 & 45 & 26 & 26 & 27 & 32 & 27 & 17 & 20 & 18 & 17 & 17 \\
\# Unique $(\mathbf{x}, \pm)$ pairs & 19 & 26 & 26 & 19 & 19 & 19 & 22 & 19 & 17 & 20 & 17 & 17 & 17 \\
\# $\operatorname{Schu}_L^p$ fixed points & 22824 & 33702 & 33702 & 22824 & 22824 & 14904 & 15740 & 14904 & 4008 & 5868 & 4008 & 4008 & 4008 \\
\# $\operatorname{Schu}_L^p$ moving points & 23256 & 12378 & 12378 & 23256 & 23256 & 8136 & 7300 & 8136 & 47832 & 45972 & 47832 & 47832 & 47832 \\
\end{tabular}
}
\caption{Statistics on $0$-cells vs $p$-cells for the Cartan types of rank 6.}
\end{table}

\begin{rmk} We remark on the computation time needed to compute the $p$-canonical basis and verify the conjecture for a given type and characteristic. Unless otherwise specified, the computation took seconds.  Types $D_6$, $B_5$, $C_5$, and $(F_4, p=2)$ took minutes. Types $E_6$, $(B_6, p=3)$ and $(C_6, p=3)$ took hours. Type $(B_6, p=2)$ took two days, and type $(C_6, p=2)$ took seven days. \end{rmk}

\clearpage
\end{landscape}

\bibliographystyle{plain}
\bibliography{mastercopy}

\end{document}